\DeclareMathAlphabet{\mathpzc}{OT1}{pzc}{m}{it}
\renewcommand{\PrintDOI}[1]{%
  \href{http://dx.doi.org/#1}{{\tt DOI:#1}}%
%  \IfEmptyBibField{volume}{, (to appear in print)}{}%
}
\renewcommand{\eprint}[1]{#1}
\numberwithin{equation}{section}
\newtheorem{theorem}{Theorem}[section]
\newtheorem{corollary}[theorem]{Corollary}
\newtheorem{lemma}[theorem]{Lemma}
\newtheorem{proposition}[theorem]{Proposition}
\theoremstyle{remark}
\newtheorem{remark}[theorem]{Remark}
\theoremstyle{definition}
\newtheorem{definition}[theorem]{Definition}
\newcommand{\bp}{\begin{proof}}
\newcommand{\ep}{\end{proof}}
\DeclareMathOperator{\Nat}{Nat}
\DeclareMathOperator{\Irr}{Irr}
\DeclareMathOperator{\Mat}{Mat}
\DeclareMathOperator{\supp}{supp}
\DeclareMathOperator{\Rep}{Rep}
\DeclareMathOperator{\Ind}{Ind}
\DeclareMathOperator{\Endb}{End_b}
\DeclareMathOperator{\Endd}{End}
\DeclareMathOperator{\tr}{tr}
\DeclareMathOperator{\Tr}{Tr}
\newcommand{\C}{\mathbb{C}}
\newcommand{\N}{\mathbb{N}}
\newcommand{\R}{\mathbb{R}}
\newcommand{\Z}{\mathbb{Z}}
\newcommand{\A}{\mathcal{A}}
\newcommand{\BB}{\mathcal{B}}
\newcommand{\CC}{\mathcal{C}}
\newcommand{\E}{\mathcal{E}}
\newcommand{\HH}{\mathcal{H}}
\newcommand{\M}{\mathcal{M}}
\newcommand{\vNN}{\mathcal{N}}
\newcommand{\PP}{\mathcal{P}}
\newcommand{\RR}{\mathcal{R}}
\newcommand{\DD}{\mathcal{D}}
\newcommand{\Hilb}{\mathrm{Hilb}}
\newcommand{\st}{\mathrm{st}}
\newcommand{\vNotimes}{\mathbin{\bar{\otimes}}}
\newcommand{\Tens}{\mathpzc{Tens}}
\newcommand{\un}{\mathds{1}}
\newcommand{\norm}[1]{\left\| #1 \right \|}
\newcommand{\dmin}{d_{\mathrm{min}}}
\newcommand{\Dhat}{{\hat\Delta}}
\mathchardef\mhyph="2D
\newcommand{\NN}{{N\mhyph N}}
\newcommand{\End}[2]{#1\left(#2\right)}
\begin{document}

\title{Poisson boundaries of monoidal categories}

\author[S. Neshveyev]{Sergey Neshveyev}

\email{sergeyn@math.uio.no}

\address{Department of Mathematics, University of Oslo, P.O. Box 1053
Blindern, NO-0316 Oslo, Norway}

\thanks{The research leading to these results has received funding
from the European Research Council under the European Union's Seventh
Framework Programme (FP/2007-2013)
/ ERC Grant Agreement no. 307663%--NCGQG
}

\author[M. Yamashita]{Makoto Yamashita}

\email{yamashita.makoto@ocha.ac.jp}

\address{Department of Mathematics, Ochanomizu University, Otsuka
2-1-1, Bunkyo, 112-8610 Tokyo, Japan}

\thanks{Supported by JSPS KAKENHI Grant Number 25800058}

\date{May 26, 2014; minor corrections July 13, 2016}

\keywords{monoidal category; random walk; Poisson boundary; cat\'{e}gorie mono\"{\i}dale; marche al\'{e}atoire; fronti\`{e}re de Poisson}

\subjclass[2010]{18D10 (60J50, 46L50)}

\maketitle

\begin{minipage}{\linewidth}
\begin{abstract}
Given a rigid C$^*$-tensor category $\CC$ with simple unit and a probability measure $\mu$ on the set of isomorphism classes of its simple objects, we define the Poisson boundary of $(\CC,\mu)$. This is a new C$^*$-tensor category $\PP$, generally with nonsimple unit, together with a unitary tensor functor $\Pi\colon\CC\to\PP$. Our main result is that if $\PP$ has simple unit (which is a condition on some classical random walk), then $\Pi$ is a universal unitary tensor functor defining the amenable dimension function on $\CC$. Corollaries of this theorem unify various results in the literature on amenability of C$^*$-tensor categories, quantum groups, and subfactors.
\end{abstract}
\selectlanguage{french}
\begin{abstract}
Etant donn\'{e}es une C$^*$-cat\'{e}gorie tensorielle rigide $\CC$ dont l'objet unit\'{e} est simple ainsi qu'une mesure de probabilit\'{e} $\mu$ sur l'ensemble de classes d'isomorphisme des objets simples, nous d\'{e}finissons la fronti\`{e}re de Poisson de $(\CC, \mu)$. C'est une nouvelle C$^*$-cat\'{e}gorie tensorielle $\PP$ dont l'objet unit\'{e} n'est pas, en g\'{e}n\'{e}ral, simple, coupl\'{e}e avec un foncteur unitaire tensoriel $\Pi\colon\CC\to\PP$.  Notre r\'{e}sultat principal assure que si l'objet unit\'{e} de $\PP$ est simple (ce qui se traduit par une condition sur une certaine marche al\'{e}atoire classique), alors $\Pi$ est un foncteur unitaire tensoriel universel qui d\'{e}finit la fonction de dimension moyennable sur $\CC$. Les corollaires de ce th\'{e}or\`{e}me unifient diff\'{e}rents r\'{e}sultats connus sur la moyennabilit\'{e} des C$^*$-cat\'{e}gories tensorielles, des groupes quantiques et des sous-facteurs.
\end{abstract}
\selectlanguage{english}
\end{minipage}

\bigskip

\section*{Introduction}

The notion of amenability for monoidal categories first appeared in
Popa's seminal work~\cite{MR1278111} on classification of subfactors
as a crucial condition defining a class of inclusions admitting good
classification. He then gave various characterizations of this property
analogous to the usual amenability conditions for discrete groups: a
Kesten type condition on the norm of the principal graph, a F{\o}lner
type condition on the existence of almost invariant sets, and a
Shannon--McMillan--Breiman type condition on relative entropy, to name
a few.

This stimulated a number of interesting developments in related fields
of operator algebras. First, Longo and Roberts~\cite{MR1444286}
developed a general theory of dimension for C$^*$-tensor categories,
and indicated that the language of sectors/subfactors is well suited
for studying amenability in this context.  Then Hiai and
Izumi~\cite{MR1644299} studied amenability for fusion
algebras/hypergroups endowed with a probability measure, and obtained
many characterizations of this property in terms of random walks and
almost invariant vectors in the associated $\ell^p$-spaces. These
studies were followed by the work of Hayashi and
Yamagami~\cite{MR1749868}, who established a way to realize amenable
monoidal categories as bimodule categories over the hyperfinite II$_1$
factor.

In addition to subfactor theory, another source of interesting
monoidal categories is the theory of quantum groups. In this
framework, the amenability question concerns the existence of almost
invariant vectors and invariant means for a discrete quantum group, or
some property of the dimension function on the category of unitary
representations of a compact quantum group~\citelist{\cite{MR1679171}\cite{MR2276175} \cite{MR2113848}}.  Here, one should be aware that there are two different notions of amenability involved. One is coamenability of compact quantum groups (equivalently, amenability of their discrete duals) considered in the regular representations, the
other is amenability of representation categories. These notions
coincide only for quantum groups of Kac type.

\smallskip

In yet another direction, Izumi~\cite{MR1916370} developed a theory of
noncommutative Poisson boundaries for discrete quantum groups in order to study
the minimality (or lack thereof) of infinite tensor product type
actions of compact quantum groups. From the subsequent
work~\citelist{\cite{MR2200270}\cite{MR2335776}} it became
increasingly clear that for coamenable compact quantum groups the
Poisson boundary captures a very elaborate difference between the two
amenability conditions. Later, an important result on noncommutative
Poisson boundaries was obtained by De Rijdt and Vander
Vennet~\cite{MR2664313}, who found a way to compute the boundaries
through monoidal equivalences.  In light of the categorical duality
for compact quantum group actions recently developed
in~\citelist{\cite{MR3121622}\cite{MR3426224}},
this result suggests that the Poisson boundary should really be an
intrinsic notion of the representation category $\Rep G$ itself,
rather than of the choice of a fiber functor giving a concrete
realization of $\Rep G$ as a category of Hilbert spaces. Starting from
this observation, in this paper we define Poisson boundaries for
monoidal categories.

To be more precise, our construction takes a rigid C$^*$-tensor
category $\CC$ with simple unit and a probability measure $\mu$ on the
set $\Irr(\CC)$ of isomorphism classes of simple objects, and gives
another C$^*$-tensor category $\PP$ together with a unitary tensor
functor $\Pi\colon\CC\to\PP$. Although the category~$\PP$ is defined
purely categorically, there are several equivalent ways to describe
it, or at least its morphism sets, that are more familiar to the operator
algebraists. One is an analogue of the standard description of
classical Poisson boundaries as ergodic components of the time
shift. Another is in terms of relative commutants of von Neumann
algebras, in the spirit
of~\citelist{\cite{MR1444286}\cite{MR1749868}\cite{MR1916370}}. For
categories arising from subfactors and quantum groups, this can be
made even more concrete. For subfactors, computing the Poisson boundary essentially corresponds to passing to the standard model of a subfactor~\cite{MR1278111}. For quantum groups, not surprisingly as this was our initial motivation, the Poisson boundary of the representation category of $G$ can be described in terms of the Poisson boundary of~$\hat G$. The last
result will be discussed in detail in a separate
publication~\cite{MR3291643}, since we also want to describe the
action of $\hat G$ on the boundary in categorical terms and this would
lead us away from the main subject of this paper.

\smallskip

Our main result is that if $\PP$ has simple unit, which corresponds to
ergodicity of the classical random walk defined by $\mu$ on
$\Irr(\CC)$, then $\Pi\colon\CC\to\PP$ is a universal unitary tensor
functor which induces the amenable dimension function on $\CC$. From
this we conclude that $\CC$ is amenable if and only if there exists a
measure $\mu$ such that $\Pi$ is a monoidal equivalence. The last
result is a direct generalization of the famous characterization of
amenability of discrete groups in terms of their Poisson boundaries
due to Furstenberg~\cite{MR0352328}, Kaimanovich and
Vershik~\cite{MR704539}, and Rosenblatt~\cite{MR630645}. From this
comparison it should be clear that, contrary to the usual
considerations in subfactor theory, it is not enough to work only with
finitely supported measures, since there are amenable groups which do
not admit any finitely supported ergodic
measures~\cite{MR704539}. The characterization of amenability in
terms of Poisson boundaries generalizes several results
in~\citelist{\cite{MR1278111} \cite{MR1444286}\cite{MR1749868}}. Our
main result also allows us to describe functors that factor through
$\Pi$ in terms of categorical invariant means. For quantum groups
this essentially reduces to the equivalence between coamenability of
$G$ and amenability of~$\hat
G$~\citelist{\cite{MR2276175}\cite{MR2113848}}.

\smallskip

Although our theory gives a satisfactory unification of various
amenability results, the main remarkable property of the functor
$\Pi\colon\CC\to\PP$ is, in our opinion, the universality. If the
category $\PP$ happens to have a simpler structure compared to $\CC$,
this universality allows one to reduce classification of functors from
$\CC$ inducing the amenable dimension function to an easier
classification problem for functors from $\PP$. This idea will be used
in~\cite{MR3556413} to classify a class of compact quantum
groups.

\smallskip

\paragraph{\bf Acknowledgement} M.Y.~thanks M.~Izumi, S.~Yamagami, T.~Hayashi, and R.~Tomatsu for their interest and encouragement at various stages of
the project.

\bigskip

\section{Preliminaries}
\label{sec:preliminaries}

\subsection{Monoidal categories}
\label{sec:monoidal-categories}

In this paper we study \emph{rigid C$^*$-tensor
categories}.  By now there are many texts covering the basics of
this subject, see for
example~\citelist{\cite{MR2091457}\cite{MR2681261}\cite{MR3204665}}
and references therein.  We mainly follow the conventions
of~\cite{MR3204665}, but for the convenience of the reader
we summarize the basic definitions and facts below.

\smallskip

A \emph{C$^*$-category} is a category $\CC$ whose morphism sets
$\CC(U, V)$ are complex Banach spaces endowed with complex conjugate
involution $\CC(U, V) \to \CC(V, U)$, $T \mapsto T^*$ satisfying the
C$^*$-identity. Unless said otherwise, we always assume that $\CC$ is
closed under finite direct sums and subobjects. The latter means that any
idempotent in the endomorphism ring $\End{\CC}{X} = \CC(X, X)$ comes
from a direct summand of~$X$.

A C$^*$-category is said to be \emph{semisimple} if any object is
isomorphic to a direct sum of simple (that is, with the endomorphism
ring $\C$) objects. We then denote the isomorphism classes of simple
objects by~$\Irr (\CC)$ and assume that this set is at most countable.
Many results admit formulations which do not require this
assumption and can be proved by considering subcategories generated by
countable sets of simple objects, but we leave this matter to the
interested reader.

A \emph{unitary functor}, or a \emph{C$^*$-functor}, is a linear
functor of C$^*$-categories $F\colon \CC \to \CC'$ satisfying $F(T^*)
= F(T)^*$.

In this paper we frequently perform the following operation: starting
from a C$^*$-category $\CC$, we replace the morphisms sets by some
larger system $\DD(X, Y)$ naturally containing the original $\CC(X,
Y)$.  Then we perform the \emph{idempotent completion} to construct a
new category $\DD$. That is, we regard the projections $p
\in \End{\DD}{X}$ as objects in the new category, and take $q \DD(X,
Y) p$ as the morphism set from the object represented by $p
\in \End{\DD}{X}$ to the one by $q \in \End{\DD}{Y}$.  Then the
embeddings $\CC(X, Y) \to \DD(X, Y)$ can be considered as a
C$^*$-functor $\CC \to \DD$.

A \emph{C$^*$-tensor category} is a C$^*$-category endowed with a
unitary bifunctor $\otimes \colon \CC \times \CC \to \CC$, a
distinguished object $\un \in \CC$, and natural unitary isomorphisms
\begin{align*} \un \otimes U &\simeq U \simeq U \otimes \un,& \Phi({U,
V, W})&\colon (U \otimes V) \otimes W \to U \otimes (V \otimes W)
\end{align*} satisfying certain compatibility conditions.

A \emph{unitary tensor functor}, or a \emph{C$^*$-tensor functor},
between two C$^*$-tensor categories $\CC$ and $\CC'$ is given by a
triple $(F_0, F, F_2)$, where $F$ is a C$^*$-functor $\CC \to \CC'$,
$F_0$ is a unitary isomorphism $\un_{\CC'} \to F(\un_\CC)$, and $F_2$
is a natural unitary isomorphism $F(U) \otimes F(V) \to F(U \otimes
V)$, which are compatible with the structure morphisms of $\CC$ and
$\CC'$. As a rule, we denote tensor functors by just one symbol
$F$.

When $\CC$ is a strict C$^*$-tensor category and $U \in \CC$, an
object $V$ is said to be a \emph{dual object} of $U$ if there are
morphisms $R \in \CC(\un, V \otimes U)$ and $\bar{R} \in \CC(\un, U
\otimes V)$ satisfying the conjugate equations
\begin{align*} (\iota_{V} \otimes \bar{R}^*) (R \otimes \iota_{V}) &=
\iota_{V},& (\iota_{U} \otimes R^*) (\bar{R} \otimes \iota_{U}) &=
\iota_{U}.
\end{align*} If any object in $\CC$ admits a dual, $\CC$ is said to be
\emph{rigid} and we denote a choice of a dual of $U \in \CC$
by~$\bar{U}$. We assume that rigid C$^*$-tensor categories have simple tensor units.

A rigid C$^*$-tensor category (with simple unit) has
finite dimensional morphism spaces and hence is automatically
semisimple by our assumption of existence of subobjects.

The quantity
$$
d^\CC(U) =\min_{(R, \bar{R})} \norm{R} \norm{\bar{R}}
$$
is called the \emph{intrinsic dimension} of $U$, where $(R, \bar{R})$
runs through the set of solutions of conjugate equations as above. We
omit the superscript $\CC$ when there is no danger of confusion.  A
solution $(R, \bar{R})$ of the conjugate equations for $U$ is called
\emph{standard} if
$$
\|R\|=\|\bar R\|=d(U)^{1/2}.
$$
Solutions of the conjugate equations for $U$ are unique up to the
transformations $$(R, \bar{R}) \mapsto ((T^* \otimes \iota) R, (\iota
\otimes T^{-1}) \bar{R}).$$ Furthermore, if $(R,\bar R)$ is standard,
then such a transformation defines a standard solution if and only if
$T$ is unitary.

In a rigid C$^*$-tensor category $\CC$ we often fix standard solutions
$(R_U,\bar R_U)$ of the conjugate equations for every object $U$. Then
$\CC$ becomes \emph{spherical} in the sense that one has the equality
$R_U^* (\iota \otimes T) R_U = \bar{R}_U^* (T \otimes \iota)
\bar{R}_U$ for any $T \in \End{\CC}{U}$.  The normalized linear
functional
$$
\tr_U(T) = d(U)^{-1} R_U^* (\iota \otimes T)
R_U=d(U)^{-1}\bar{R}_U^* (T \otimes \iota) \bar{R}_U
$$
is a tracial state on the finite dimensional C$^*$-algebra
$\End{\CC}{U}$. It is independent of the choice of a standard
solution. More generally, for any objects~$X$, $U$ and~$V$ we can consider the normalized partial categorical traces
$$
\tr_X\otimes\iota\colon\CC(X\otimes U,X\otimes V)\to\CC(U,V)
\ \ \text{and}\ \ \iota\otimes\tr_X\colon\CC(U\otimes X,V\otimes
X)\to\CC(U,V).
$$
Namely, with a standard solution
$(R_X, \bar{R}_X)$ as above, we have
$$
(\tr_X\otimes\iota)(T)=d(X)^{-1}(R_X^*\otimes\iota)(\iota\otimes
T)(R_X\otimes\iota),
\ \ (\iota\otimes\tr_X)(T)=d(X)^{-1}(\iota\otimes\bar
R_X^*)(T\otimes\iota)(\iota\otimes\bar R_X).
$$

Given a rigid C$^*$-tensor category $\CC$, if $[U]$ and $[V]$ are
elements of $\Irr(\CC)$, we can define their product in
$\Z_+[\Irr(\CC)]$ by putting
$$
[U] \cdot [V] = \sum_{[W] \in \Irr (\CC)} \dim \CC(W, U \otimes V)
[W],
$$
thus getting a semiring $\Z_+[\Irr(\CC)]$. Extending this formula by
bilinearity, we obtain a ring structure on $\Z[\Irr(\CC)]$. The map
$[U] \mapsto d(U)$ extends to a ring homomorphism $\Z[\Irr(\CC)] \to
\R$.  The pair $(\Z[\Irr(\CC)], d)$ is called the \emph{fusion
algebra} of $\CC$.  In general, a ring homomorphism $d'\colon
\Z[\Irr(\CC)] \to \R$ satisfying $d'([U]) > 0$ and $d'([U])=d'([\bar
U])$ for every $[U] \in \Irr(\CC)$ is said to be a \emph{dimension
function} on $\CC$.

For a rigid C$^*$-tensor category $\CC$, the right multiplication by
$[U] \in \Irr(\CC)$ on $\Z[\Irr(\CC)]$ can be considered as a densely
defined operator $\Gamma_U$ on $\ell^2(\Irr(\CC))$.  This definition
extends to arbitrary objects of $\CC$ by the formula $\Gamma_U =
\sum_{[V] \in \Irr(\CC)} \dim(V, U) \Gamma_V$.  If $d'$ is a dimension
function on $\CC$, one has the estimate $$\norm{\Gamma_U}_{B(\ell^2(
\Irr(\CC)))} \le d'(U).$$ If the equality holds for all objects $U$,
then the dimension function $d'$ is called \emph{amenable}. Clearly,
there can be at most one amenable dimension function. If the intrinsic
dimension function is amenable, then $\CC$ itself is called amenable.

\subsection{Categories of functors} \label{sstensorfunctors}

Given a rigid C$^*$-tensor category $\CC$ we will consider the
category of unitary tensor functors from $\CC$ into C$^*$-tensor
categories. Its objects are pairs $(\A,E)$, where $\A$ is a
C$^*$-tensor category and $E\colon\CC\to\A$ is a unitary tensor
functor. The morphisms $(\A,E)\to(\BB,F)$ are unitary tensor functors
$G\colon\A\to\BB$, considered up to natural unitary monoidal
isomorphisms,\footnote{Therefore the category of functors from $\CC$
we consider here is different from the category $\Tens(\CC)$ defined
in \cite{MR3291643}, where we wanted to distinguish between
isomorphic functors and defined a more refined notion of morphisms.}
such that $GE$ is naturally unitarily isomorphic to $F$.

A more concrete way of thinking of this category is as follows. First
of all we may assume that~$\CC$ is strict. Consider a unitary tensor
functor $E\colon\CC\to\A$. The functor $E$ is automatically faithful
by semisimplicity and existence of conjugates in $\CC$. It follows
that by replacing the pair $(\A,E)$ by an isomorphic one, we may
assume that $\A$ is a strict C$^*$-tensor category containing $\CC$
and $E$ is simply the embedding functor. Namely, define the new sets
of morphisms between objects $U$ and $V$ in $\CC$ as $\A(E(U),E(V))$,
and then complete the category we thus obtain with respect to
subobjects.

Assume now that we have two strict C$^*$-tensor categories $\A$ and
$\BB$ containing $\CC$, and let $E\colon\CC\to \A$ and
$F\colon\CC\to\BB$ be the embedding functors. Assume $[G]\colon
(\A,E)\to(\BB,F)$ is a morphism. This means that there exist unitary
isomorphisms $\eta_U\colon G(U)\to U$ in $\BB$ such that
$G(T)=\eta_V^{-1}T\eta_U$ for any morphism $T\in\CC(U,V)$, and the
morphisms $$G_{2}(U,V)\colon G(U)\otimes G(V)\to G(U\otimes V)$$
defining the tensor structure of $G$ restricted to $\CC$ are given by
$G_{2}(U,V)=\eta_{U\otimes V}^{-1}(\eta_U\otimes\eta_V)$. For
objects~$U$ of~$\A$ that are not in $\CC$ put
$\eta_U=1\in\BB(G(U))$. We can then define a new unitary tensor
functor $\tilde G\colon \A\to\BB$ by letting $\tilde G(U)=U$ for
objects $U$ in $\CC$ and $\tilde G(U)=G(U)$ for the remaining objects,
$\tilde G(T)=\eta_VG(T)\eta_U^{-1}$ for morphisms, and $\tilde
G_2(U,V)=\eta_{U\otimes
V}G_2(U,V)(\eta_U^{-1}\otimes\eta_V^{-1})$.  Then $[G]=[\tilde G]$ and
the restriction of $\tilde G$ to $\CC\subset\A$ coincides with the
embedding (tensor) functor $\CC\to\BB$.

Therefore, any unitary tensor functor $\CC\to\A$ is naturally unitary
isomorphic to an embedding functor, and the morphisms between two such
embeddings $E\colon\CC\to\A$ and $F\colon\CC\to\BB$ are the unitary
tensor functors $G\colon\A\to\BB$ extending $F$, considered up to
natural unitary isomorphisms. If, furthermore, $\A$ is generated by
the objects of $\CC$ then $[G]$ is completely determined by the maps
$\A(U,V)\to\BB(U,V)$ extending the identity maps on $\CC(U,V)$ for all
objects $U$ and $V$ in $\CC$.

\subsection{Subfactor theory}

Let $N \subset M$ be an inclusion of von Neumann algebras represented
on a Hilbert space $H$.  There is a canonical bijective correspondence
between the normal semifinite faithful operator valued weights $\Phi
\colon M \to N$ and the ones $\Psi\colon N' \to M'$ in terms of
spatial derivatives~\cite{MR561983}.  Namely, for every $\Phi$ there
is a unique $\Psi$ denoted by $\Phi^{-1}$ and characterized by
the equation
$$
\frac{d \omega \Phi}{d \omega'} = \frac{d \omega}{d \omega'
\Phi^{-1}},
$$
where $\omega$ and $\omega'$ are any choices of normal semifinite
faithful weights on $N$ and $M'$.

If $E$ is a normal faithful conditional expectation from $M$ to $N$,
its \emph{index} $\Ind E$ can be defined as
$E^{-1}(1)$~\cite{MR829381}.  Suppose that $M$ and $N$ are factors
admitting conditional expectations of finite index.  Then the index is
a positive scalar and there is a unique choice of $E$ which minimizes
$\Ind E$.  This $E$ is called the \emph{minimal conditional
  expectation} of the subfactor $N \subset M$~\cite{MR976765}.

Suppose that $N \subset M$ is a subfactor endowed with a normal
conditional expectation of finite index $E\colon M \to N$.  We then
obtain a von Neumann algebra $M_1$ called the \emph{basic extension}
of $N\subset M$ with respect to $E$, as follows.  Taking a normal
semifinite faithful weight $\psi$ on $N$, the algebra $M_1 \subset
B(L^2(M, \psi E))$ is generated by~$M$ and the orthogonal projection
$e_N$, called the Jones projection, onto $L^2(N, \psi) \subset L^2(M,
\psi E)$.  One has the equality $M_1 = J N' J$, where $J$ is the
modular conjugation of $M$ with respect to~$\psi E$.  From the above
correspondence of operator valued weights, there is a canonical
conditional expectation $E_1 \colon M_1 \to M$ which has the same
index as $E$, namely, $E_1=(\Ind E)^{-1}JE^{-1}(J\cdot
J)J$.  Iterating this procedure, we obtain a tower of von Neumann
algebras
$$
N \subset M \subset M_1 \subset M_2 \subset \cdots.
$$
The higher relative commutants
$$
N' \cap M_k = \{ x \in M_k \mid \forall y \in N \colon x y = y x \}
$$
are finite dimensional C$^*$-algebras, with bound $\dim(N' \cap M_k)
\le (\Ind E)^k$.  The algebras $M' \cap M_{2 k}$
($k \in \N$) can be considered as the endomorphism rings of $M
\otimes_N M \otimes_N \cdots \otimes_N M$ in the category of
$M$-bimodules, and there are similar interpretations for the algebras $N' \cap
M_{2k + 1}$, etc., in terms of $N$-bimodules, $M$-$N$-modules, and
$N$-$M$-modules.

\subsection{Relative entropy}

An important numerical invariant for inclusions of von Neumann
algebras, closely related to index, is relative entropy. For this part
we follow the exposition in~\cite{MR2251116}.

When $\varphi$ and
$\psi$ are positive linear functionals on a C$^*$-algebra $M$, we
denote their relative entropy by $S(\varphi,\psi)$. If $M$ is finite
dimensional, it can be defined as
$$
S(\varphi,\psi)=\begin{cases}\Tr(Q_\varphi(\log Q_\varphi-\log
Q_\psi)),& \text{if}\ \ \varphi\le\lambda\psi\ \ \text{for some}\ \
\lambda>0,\\+\infty,&\text{otherwise,}\end{cases}
$$
where $\Tr$ is the canonical trace on $M$ which takes value $1$ on
every minimal projection in $M$, and $Q_\varphi\in M$ is the density
matrix of $\varphi$, so that we have $\varphi(x) =\Tr( x
Q_\varphi)$.  For a single positive linear functional $\psi$ on a
finite dimensional $M$, we also have its von Neumann entropy defined
as $S(\psi)=-\Tr(Q_\psi\log Q_\psi)$.

\smallskip

Given an inclusion of C$^*$-algebras $N\subset M$ and a state
$\varphi$ on $M$, the \emph{relative entropy} $H_\varphi(M|N)$ (also
called \emph{conditional entropy} in the classical probability
theory) is defined as the supremum of the quantities
$$
\sum_i(S(\varphi_i,\varphi)-S(\varphi_i|_N,\varphi|_N))
$$
where $(\varphi_i)_i = (\varphi_1, \ldots, \varphi_k)$ runs through
the tuples of positive linear functionals on $M$ satisfying $\varphi =
\sum^k_{i=1} \varphi_i$. If $M$ is finite dimensional, this can also
be written as
$$
H_\varphi(M | N) = S(\varphi) - S(\varphi|_N) + \sup_{(\varphi_i)_i}
\sum_i \bigl(S(\varphi_i|_N) - S(\varphi_i)\bigr),
$$
where supremum is again taken over all finite decompositions of
$\varphi$.

Relative entropy has the following lower semicontinuity
property.  Suppose that $N \subset M$ is an inclusion of von Neumann
algebras and $\varphi$ is a normal state on $M$.  Suppose that $B_i
\subset A_i$ ($i=1,2,\ldots$) are increasing sequences of subalgebras
$B_i \subset N$, $A_i \subset M$ such that $\bigcup_i A_i$ and $\bigcup_i
B_i$ are $s^*$-dense in~$M$ and~$N$, respectively.  Then one has
the estimate $$H_\varphi(M | N) \le \liminf_i H_\varphi(A_i | B_i).$$

If $N \subset M$ is an inclusion of von Neumann algebras and $E \colon
M \to N$ is a normal conditional expectation, the relative entropy of
$M$ and $N$ with respect to $E$ is defined by
$$
H_E(M | N) = \sup_\varphi H_\varphi(M | N),
$$
where $\varphi$ runs through the normal states on $M$ satisfying
$\varphi = \varphi E$~\cite{MR1096438}. If $M$ and $N$ are factors,
then we have the estimate $H_E(M|N)\le\log\Ind E$.

\bigskip

\section{Categorical Poisson boundary}

Let $\CC$ be a strict rigid C$^*$-tensor category satisfying our
standard assumptions: it is closed under finite direct sums and subobjects,
the tensor unit is simple, and $\Irr(\CC)$ is at most countable.

\smallskip

Let $\mu$ be a probability measure on $\Irr(\CC)$. The Poisson
boundary of $(\CC,\mu)$ will be a new C$^*$-tensor category $\PP$,
possibly with nonsimple unit, together with a unitary tensor functor
$\Pi\colon\CC\to\PP$. In this section we define $(\PP,\Pi)$ in purely
categorical terms. In the next section we will give several more
concrete descriptions of this construction.

\smallskip

For an object $U$ consider the functor $\iota\otimes
U\colon\CC\to\CC$, $X \mapsto X \otimes U$. Given two objects $U$ and $V$, consider the space
$\Nat(\iota\otimes U,\iota\otimes V)$ of natural transformations from
$\iota\otimes U$ to $\iota\otimes V$, so elements of
$\Nat(\iota\otimes U,\iota\otimes V)$ are collections
$\eta=(\eta_X)_X$ of morphisms $\eta_X\colon X\otimes
U\to X\otimes V$, natural in $X$. For every object $X$ we can define a linear operator
$P_X$ on $\Nat(\iota\otimes U,\iota\otimes V)$ by
$$
P_X(\eta)_Y=(\tr_X\otimes\iota)(\eta_{X\otimes Y})
$$
with the partial categorical trace introduced in
Section~\ref{sec:monoidal-categories}.  Denote by
$\hat{\CC}(U, V) \subset \Nat(\iota\otimes U,\iota\otimes V)$ the
subspace of bounded natural transformations, that is, of elements
$\eta$ such that $\sup_Y\|\eta_Y\|<\infty$. More concretely, taking a
representative $U_s$ for each $s \in \Irr(\CC)$, we can present
$\hat{\CC}(U, V)$ as
$$
\hat{\CC}(U, V) \cong \ell^\infty\text{-}\bigoplus_{s}\CC(U_s \otimes U, U_s\otimes V),
$$
since the natural transformations are determined by their actions on the simple objects. This is a Banach space, and the operator
$P_X$ defines a contraction on it. It is also clear that the operator
$P_X$ depends only on the isomorphism class of $X$.

From now on let us fix a representative $U_s$ for every
$s\in \Irr(\CC)$ as above. We write $\tr_s$ instead of $\tr_{U_s}$,
$P_s$ instead of $P_{U_s}$, and so on. Similarly, for a natural
transformation $\eta\colon\iota\otimes U\to\iota\otimes V$ we
write $\eta_s$ instead of $\eta_{U_s}$. Let also denote by
$e\in\Irr(\CC)$ the index corresponding to $\un$. For convenience we
assume that $U_e=\un$. Define an involution on $\Irr(\CC)$ such that
$U_{\bar s}$ is a dual object to $U_s$.

Consider now the operator
$$
P_\mu=\sum_s\mu(s)P_s.
$$
This is a well-defined contraction on $\hat{\CC}(U, V)$. We say that a
bounded natural transformation $\eta\colon\iota\otimes
U\to\iota\otimes V$ is $P_\mu$-{\em harmonic} if
$$P_\mu(\eta)=\eta.$$
Any morphism $T\colon U\to V$ defines a bounded natural transformation
$(\iota_X\otimes T)_X$, which is obviously $P_\mu$-harmonic for every
$\mu$. When there is no ambiguity, we denote this natural
transformation simply by $T$.

The composition of harmonic transformations is in general not
harmonic. But we can define a new composition as follows.

\begin{proposition} \label{pproduct} Given bounded $P_\mu$-harmonic
  natural transformations $\eta\colon\iota\otimes U\to\iota\otimes V$
  and $\nu\colon\iota\otimes V\to\iota\otimes W$, the limit
$$
(\nu\cdot\eta)_X=\lim_{n\to\infty}P^n_\mu(\nu\eta)_X
$$
exists for all objects $X$ and defines a bounded $P_\mu$-harmonic
natural transformation $\iota\otimes U\to\iota\otimes W$. Furthermore,
the composition $\cdot$ is associative.
\end{proposition}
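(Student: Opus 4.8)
The plan is to recognize $P_\mu$ as a normal unital completely positive (UCP) map on a single von Neumann algebra, to deduce existence of the limit from the Kadison--Schwarz inequality together with monotone convergence, and then to obtain associativity by realizing the limit through the classical random walk on $\Irr(\CC)$. First I would replace $U,V,W$ (together with a fourth target object needed for associativity) by their direct sum $Z$ and work inside $M:=\Endb(\iota\otimes Z)$, the bounded natural endomorphisms of $\iota\otimes Z$ under pointwise composition, adjoint and sup-norm. Since each $\End{\CC}{X\otimes Z}$ is a finite-dimensional C$^*$-algebra and $M$ is the $\sigma$-weakly closed $*$-subalgebra of $\prod_X\End{\CC}{X\otimes Z}$ consisting of the families natural in $X$, $M$ is a von Neumann algebra. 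Writing $p_U,p_V,p_W\in M$ for the projections $(\iota_X\otimes e_U)_X$, etc., coming from the canonical projections $e_U\colon Z\to Z$, the space $\hat\CC(U,V)$ is the corner $p_VMp_U$ and pointwise composition becomes multiplication in $M$. The operator $P_X$ is the composite of the normal unital $*$-endomorphism $\eta\mapsto(\eta_{X\otimes Y})_Y$ of $M$ (precomposition with $X\otimes-$, where naturality in $X$ is exactly what makes this land in $M$) with the normal UCP map given by applying $\tr_X\otimes\iota$ in each component; hence $P_X$, and therefore $P_\mu=\sum_s\mu(s)P_s$, is normal, UCP and unital, and harmonicity is precisely $P_\mu$-invariance. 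Finally, since a projection fixed by a UCP map lies in its multiplicative domain, each $p_\bullet$ lies in the multiplicative domain of every $P_\mu^n$, so $P_\mu^n(p_W\,c\,p_U)=p_W\,P_\mu^n(c)\,p_U$.

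For existence, note that for any harmonic $w\in M$ the Kadison--Schwarz inequality gives $P_\mu(w^*w)\ge P_\mu(w)^*P_\mu(w)=w^*w$; applying the positive maps $P_\mu^n$ shows $(P_\mu^n(w^*w))_n$ is increasing, and it is bounded by $\norm{w}^2$ because $P_\mu$ is contractive, so it converges $\sigma$-weakly. Given harmonic $\eta\in p_VMp_U$ and $\nu\in p_WMp_V$, the elements $w_k:=\eta+i^k\nu^*$ ($k=0,\dots,3$) are harmonic (harmonic elements form a $*$-closed subspace, as $P_\mu$ preserves adjoints), and polarization together with the corner computation $p_Ww_k^*w_kp_U=i^{-k}\nu\eta$ gives the identity $\nu\eta=p_W\bigl(\tfrac14\sum_k i^kw_k^*w_k\bigr)p_U$ in $M$. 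Using the multiplicative-domain property, $P_\mu^n(\nu\eta)=\tfrac14\sum_k i^k\,p_W\,P_\mu^n(w_k^*w_k)\,p_U$ converges $\sigma$-weakly to an element $\nu\cdot\eta\in p_WMp_U$, that is, to a bounded natural transformation $\iota\otimes U\to\iota\otimes W$. As each $\CC(X\otimes U,X\otimes W)$ is finite-dimensional, $\sigma$-weak convergence in $M$ specializes to norm convergence of $(\nu\cdot\eta)_X=\lim_nP_\mu^n(\nu\eta)_X$ at every $X$; harmonicity of the limit follows from normality of $P_\mu$, since $P_\mu(\nu\cdot\eta)=\lim_nP_\mu^{n+1}(\nu\eta)=\nu\cdot\eta$.

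The main obstacle is associativity, because here the elementary manipulation breaks down: a single $P_\mu$ cannot be commuted past multiplication by a harmonic element, since harmonic elements need not lie in the multiplicative domain of $P_\mu$ itself. I would instead realize the limit probabilistically. Run the classical random walk on $\Irr(\CC)$ attached to $\mu$, whose one-step transition from $[A]$ assigns $[B]$ the weight $\sum_s\mu(s)\,d(B)\dim\CC(B,U_s\otimes A)/\bigl(d(U_s)\,d(A)\bigr)$, started from $[X]$, and let $(B_n)_n$ be the random simple objects visited. Naturality of $\eta$ identifies the partial traces building $P_\mu$ with averaging over the summands of $U_{s_{n+1}}\otimes B_n$, so that $n\mapsto\eta_{B_n}\in\CC(B_n\otimes U,B_n\otimes V)$ is a bounded martingale and $P_\mu^n(\nu\eta)_X=\mathbb{E}_X[(\nu\eta)_{B_n}]=\mathbb{E}_X[\nu_{B_n}\eta_{B_n}]$, the identity $(\nu\eta)_{B_n}=\nu_{B_n}\eta_{B_n}$ being again a consequence of naturality. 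Bounded martingale convergence then produces boundary values $\eta^{(\infty)}=\lim_n\eta_{B_n}$ and the formula $(\nu\cdot\eta)_X=\mathbb{E}_X[\nu^{(\infty)}\eta^{(\infty)}]$, from which associativity is manifest: for a third harmonic $\zeta$ one gets $((\zeta\cdot\nu)\cdot\eta)_X=\mathbb{E}_X[\zeta^{(\infty)}\nu^{(\infty)}\eta^{(\infty)}]=(\zeta\cdot(\nu\cdot\eta))_X$, using that boundary values are multiplicative for $\cdot$ (equivalently, that each harmonic element is recovered from its boundary value by the Poisson formula $\eta_{B_n}=\mathbb{E}[\eta^{(\infty)}\mid\mathcal{F}_n]$). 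The step I expect to be most delicate is the rigorous construction of this path space and the convergence of the martingale $(\eta_{B_n})_n$, whose values lie in the varying finite-dimensional spaces $\CC(B_n\otimes U,B_n\otimes V)$.
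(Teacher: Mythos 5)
Your existence argument is correct and essentially self-contained: passing to the von Neumann algebra $M=\Endb(\iota\otimes Z)\cong\ell^\infty\text{-}\oplus_s\End{\CC}{U_s\otimes Z}$, noting that $P_\mu$ is normal, unital and completely positive, using Kadison--Schwarz to make $(P_\mu^n(w^*w))_n$ increasing for harmonic $w$, and polarizing through the corners $p_VMp_U$ via the multiplicative-domain property of the fixed projections. In effect you have reproved the part of Izumi's result that the paper simply invokes: the paper's own proof reduces to $U=V=W$ and cites \cite{MR2995726}*{Corollary~5.2}, which asserts that the fixed-point space of a normal unital completely positive map on a von Neumann algebra is again a von Neumann algebra under the product $x\cdot y=s^*\text{-}\lim_nP_\mu^n(xy)$ --- and that citation also delivers associativity for free.

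The associativity step, however, has a genuine gap: the identification of $P_\mu$ with the Markov operator of the classical random walk on $\Irr(\CC)$ is valid only for $U=V=\un$. For general $U$, if $w_{t,i}\colon U_t\to U_s\otimes Y$ are isometries decomposing $U_s\otimes Y$, naturality gives $\eta_{U_s\otimes Y}=\sum_{t,i}(w_{t,i}\otimes\iota_U)\eta_t(w_{t,i}^*\otimes\iota_U)$, but the partial trace $\tr_s\otimes\iota$ is taken over the first leg only, while $w_{t,i}$ straddles $U_s\otimes Y$; one finds $P_s(\eta)_Y=d(s)^{-1}\sum_{t,i}(a_{t,i}\otimes\iota_U)(\iota_{\bar U_s}\otimes\eta_t)(a_{t,i}^*\otimes\iota_U)$ with $a_{t,i}=(R_s^*\otimes\iota_Y)(\iota_{\bar U_s}\otimes w_{t,i})$, which is a completely positive --- not a scalar --- combination of the $\eta_t$. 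Consequently $n\mapsto\eta_{B_n}$ is not a classical martingale, the expression $\mathbb{E}_X[\nu_{B_n}\eta_{B_n}]$ is not well defined, and the boundary-value formula on which your associativity argument rests does not exist in this form. (If it did, $\PP(U)$ would be classical whenever $\mu$ is ergodic, contradicting the nontrivial boundaries the paper constructs.) The correct incarnation of your idea is the \emph{noncommutative} path space of the paper's Section~\ref{sec:poiss-bound-from}: the inductive limit $\M_U$ of the algebras $\ell^\infty\text{-}\oplus_{s_n,\dots,s_0}\End{\CC}{U_{s_n}\otimes\cdots\otimes U_{s_0}\otimes U}$ with the partial-trace conditional expectations $\E_{n,0}$ and the shift endomorphism $\theta_U$. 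There $\eta\mapsto\eta^{(\infty)}=s^*\text{-}\lim_n\theta_U^n(\eta)$ is a bijection onto $\M_U^{\theta_U}$ with inverse $\E_0$, one has $\nu\cdot\eta=\E_0(\nu^{(\infty)}\eta^{(\infty)})$, and associativity follows because $\theta_U$ is multiplicative, so $\M_U^{\theta_U}$ is an honest subalgebra and $(\nu\cdot\eta)^{(\infty)}=\nu^{(\infty)}\eta^{(\infty)}$. Alternatively, stay in your operator-algebraic setup and run the Choi--Effros argument for the UCP idempotent obtained as a point-$\sigma$-weak cluster point of the Ces\`aro means of the $P_\mu^n$; that is precisely the content of the cited corollary of Izumi.
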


Note that since the spaces $\CC(X\otimes U,X\otimes W)$ are finite
dimensional by our assumptions on $\CC$, the notion of a limit is
unambiguous.

\bp[Proof of Proposition~\ref{pproduct}] This is an immediate
consequence of results of Izumi~\cite{MR2995726} (another proof will
be given in Section~\ref{sec:poiss-bound-from}). Namely, replacing
$U$, $V$ and $W$ by their direct sum we may assume that
$U=V=W$. Then
$$
\hat{\CC}(U) = \hat{\CC}(U, U) \cong
\ell^\infty\text{-}\bigoplus_s\End{\CC}{U_s\otimes U}
$$
is a von Neumann algebra and $P_\mu$ is a normal unital completely
positive map on it. By \cite{MR2995726}*{Corollary~5.2} the subspace
of $P_\mu$-invariant elements is itself a von Neumann algebra with
product $\cdot$ such that $x\cdot y$ is the $s^*$-limit of the
sequence $\{P^n_\mu(xy)\}_n$.  \ep

Using this product on harmonic elements we can define a new
C$^*$-tensor category $\PP=\PP_{\CC,\mu}$ and a unitary tensor functor
$\Pi=\Pi_{\CC,\mu}\colon\CC\to\PP$ as follows.

First consider the category $\tilde\PP$ with the same objects as in
$\CC$, but define the new spaces $\tilde\PP(U,V)$ of morphisms as the
spaces of bounded $P_\mu$-harmonic natural transformations
$\iota\otimes U\to\iota\otimes V$. Define the composition of morphisms
as in Proposition~\ref{pproduct}. We thus get a C$^*$-category,
possibly without subobjects. Furthermore, the C$^*$-algebras
$\End{\tilde\PP}{U}$ are von Neumann algebras.

Next, we define the tensor product of objects in the same way as in
$\CC$, and define the tensor product of morphisms by
$$
\nu\otimes\eta=(\nu\otimes\iota)\cdot(\iota\otimes\eta).
$$
Here, given $\nu\colon \iota\otimes U\to\iota\otimes V$ and
$\eta\colon\iota\otimes W\to\iota\otimes Z$, the natural
transformation $\nu\otimes\iota_Z\colon\iota\otimes U\otimes
Z\to\iota\otimes V\otimes Z$ is defined by
$$
(\nu\otimes\iota_Z)_X=\nu_X\otimes\iota_Z,
$$
while the natural transformation $\iota_U\otimes\eta\colon\iota\otimes
U\otimes W\to\iota\otimes U\otimes Z$ is defined by
$$
(\iota_U\otimes\eta)_X=\eta_{X\otimes U}.
$$
We remark that $\nu\otimes\iota$ and $\iota\otimes\eta$ are still
$P_\mu$-harmonic due to the identities
$$
P_X(\nu\otimes\iota) = P_X(\nu)\otimes\iota,\ \ P_X(\iota\otimes\eta)
= \iota\otimes P_X(\eta).
$$
Note also that by naturality of $\eta$ we have
$(\nu_X\otimes\iota_Z)\eta_{X\otimes U}=\eta_{X\otimes
  V}(\nu_X\otimes\iota_Z)$, which implies that
$$
\nu\otimes\eta=(\iota\otimes\eta)\cdot(\nu\otimes\iota).
$$
This shows that $\otimes\colon\tilde\PP\times\tilde\PP\to\tilde\PP$ is
indeed a bifunctor. Since $\CC$ is strict, this bifunctor is strictly associative.

Finally, complete the category $\tilde\PP$ with respect to
subobjects. This is our C$^*$-tensor category~$\PP$, possibly with
nonsimple unit. Since $\CC$ is rigid, the category $\PP$ is rigid as well. The unitary tensor functor $\Pi\colon\CC\to\PP$ is
defined in the obvious way: it is the strict tensor functor which is
the identity map on objects and $\Pi(T)=(\iota_X\otimes T)_X$ on
morphisms. We will often omit $\Pi$ and simply consider $\CC$ as a
C$^*$-tensor subcategory of $\PP$.

\begin{definition} The pair $(\PP,\Pi)$ is called the {\em Poisson
    boundary} of $(\CC,\mu)$.  We say that the Poisson boundary is
  trivial if $\Pi\colon\CC\to\PP$ is an equivalence of categories, or
  in other words, for all objects~$U$ and~$V$ in $\CC$ the only
  bounded $P_\mu$-harmonic natural transformations $\iota\otimes
  U\to\iota\otimes V$ are the transformations of the form
  $\eta=(\iota_X\otimes T)_X$ for $T\in\CC(U,V)$.
\end{definition}

The algebra $\PP(\un)$ is determined by the random walk on $\Irr(\CC)$
with transition probabilities
$$
p_\mu(s,t)=\sum_r\mu(r)m^t_{rs}\frac{d(t)}{d(r) d(s)},
$$
where $d(s)=d(U_s)$ and $m^t_{rs}=\dim\CC(U_t,U_r\otimes
U_s)$. Namely, if we identify $\hat\CC(\un)$ with
$$
\ell^\infty\text{-}\bigoplus_s\CC(U_s)=\ell^\infty(\Irr(\CC)),
$$
then the operator $P_\mu$ on $\hat\CC(\un)$ is the Markov operator
defined by $p_\mu$, so $(P_\mu f)(s)=\sum_tp_\mu(s,t)f(t)$. Therefore
$\PP(\un)$ is the algebra of bounded measurable functions on the
Poisson boundary, in the usual probabilistic sense, of the random walk
on $\Irr(\CC)$ with transition probabilities~$p_\mu(s,t)$. We say
that~$\mu$ is \emph{ergodic}, if this boundary is trivial, that is,
the tensor unit of~$\PP$ is simple.

We say that $\mu$ is \emph{symmetric} if $\mu(s)=\mu(\bar s)$ for all
$s$, and that $\mu$ is \emph{generating} if every simple object
appears in the decomposition of $U_{s_1}\otimes\dots \otimes U_{s_n}$
for some $s_1,\dots,s_n\in\supp\mu$ and $n\ge1$. Equivalently, $\mu$
is generating if $\bigcup_{n\ge1}\supp\mu^{*n}=\Irr(\CC)$, where the
convolution of probability measures on $\Irr(\CC)$ is defined by
$$
(\nu*\mu)(t)=\sum_{s,r}\nu(s)\mu(r)m^t_{sr}\frac{d(t)}{d(s) d(r)}.
$$
We will write $\mu^n$ instead of $\mu^{*n}$. The definition of the
convolution is motivated by the identity $P_\mu P_\nu=P_{\nu*\mu}$.

We remark that a symmetric ergodic measure $\mu$, or even an ergodic
measure with symmetric support, is automatically generating. Indeed,
the symmetry assumption implies that we have a well-defined
equivalence relation on $\Irr(\CC)$ such that $s\sim t$ if and only if
$t$ can be reached from $s$ with nonzero probability in a finite
nonzero number of steps. Then any bounded function on $\Irr(\CC)$ that
is constant on equivalence classes is $P_\mu$-harmonic. Hence $\mu$ is
generating by the ergodicity assumption.

\smallskip

Let us say that $\CC$ is \emph{weakly amenable} if the fusion algebra
$(\Z[\Irr(\CC)],d)$ is weakly amenable in the sense of Hiai and
Izumi~\cite{MR1644299}, that is, there exists a left invariant mean on
$\ell^\infty(\Irr(\CC))$. By definition this is a state $m$ such that
$m(P_s(f))=m(f)$ for all $f\in\ell^\infty(\Irr(\CC))$ and
$s\in\Irr(\CC)$. Of course, it is also possible to define right
invariant means, and by \cite{MR1644299}*{Proposition~4.2} if there
exists a left or right invariant mean, then there exists a
bi-invariant mean. By the same proposition amenability implies weak
amenability, as the term suggests. But as opposed to the group case,
in general, the converse is not true. Using this terminology let us
record the following known result.

\begin{proposition} \label{pweakamen} An ergodic probability measure
on $\Irr(\CC)$ exists if and only if $\CC$ is weakly
amenable. Furthermore, if an ergodic measure exists, then it can be
chosen to be symmetric and with support equal to the entire space
$\Irr(\CC)$.
\end{proposition}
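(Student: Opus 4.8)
The plan is to translate both conditions into properties of the Markov operator $P_\mu$ and of the family of Markov operators $P_s$ ($s\in\Irr(\CC)$) acting on $\ell^\infty(\Irr(\CC))$, and then to run the fusion-algebra version of the Kaimanovich--Vershik--Rosenblatt circle of ideas developed by Hiai and Izumi~\cite{MR1644299}. Recall that $P_\mu=\sum_s\mu(s)P_s$ is the Markov operator of the walk $p_\mu$, that $\mu$ is ergodic precisely when the only bounded $P_\mu$-harmonic functions on $\Irr(\CC)$ are the constants, and that weak amenability of $\CC$ means the existence of a state $m$ on $\ell^\infty(\Irr(\CC))$ with $m\circ P_s=m$ for all $s$. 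First I would record, following the discussion around~\cite{MR1644299}*{Proposition~4.2}, that the existence of such an invariant mean is equivalent to a Reiter-type condition: a net of unit vectors $\xi_i$ in the positive part of the weighted space $\ell^1(\Irr(\CC))$ (with the Plancherel weights $d(s)^2$) such that $\norm{P_s^*\xi_i-\xi_i}_1\to0$ for every $s$. I would also note that, when $\mu$ is symmetric, the walk $p_\mu$ is reversible with respect to these weights, so $P_\mu$ is self-adjoint on the corresponding $\ell^2$-space; this is the structure that makes the symmetric case the natural one.

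For the implication that an ergodic measure forces weak amenability, the plan is to show that triviality of the Poisson boundary of $p_\mu$ produces a Reiter sequence directly. Concretely, I would argue that trivial boundary makes the convolution powers $\mu^n$ asymptotically translation invariant, i.e.\ $\norm{\delta_s*\mu^n-\mu^n}\to0$ in the weighted $\ell^1$-norm for every $s\in\supp\mu$, and hence for all $s$ by the generating property; suitable Ces\`aro averages of the $\mu^n$ then give the Reiter sequence and therefore an invariant mean. This is the fusion-algebra analogue of the elementary direction of the Kaimanovich--Vershik theorem~\cite{MR704539}, and I would cite~\cite{MR1644299} for the random-walk estimates in this setting. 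The point to emphasize here is that the conclusion is only \emph{weak} amenability: one must stay within invariant-mean/Reiter arguments and not appeal to a Kesten-type spectral-radius criterion, which would yield the strictly stronger (full) amenability and is therefore unavailable.

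The reverse implication is where the real work lies, and it is the analogue of the hard, constructive direction of~\cite{MR704539} and~\cite{MR630645}. Starting from an invariant mean on $\ell^\infty(\Irr(\CC))$, the plan is to manufacture a symmetric probability measure $\mu$ with $\supp\mu=\Irr(\CC)$ whose walk $p_\mu$ has trivial boundary, thereby also proving the ``furthermore'' clause in one stroke. The construction follows the entropy/F{\o}lner scheme: weak amenability provides almost invariant finite ``sets'' (via the Reiter vectors above), and by averaging point masses over larger and larger such sets---symmetrizing with the involution $s\mapsto\bar s$ and mixing in a fixed generating measure to guarantee full support---one builds $\mu$ whose asymptotic entropy per step vanishes, which is equivalent to triviality of the boundary. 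The main obstacle is exactly this step: arranging \emph{simultaneously} that $\mu$ be symmetric, have full support, and still have vanishing asymptotic entropy (so that ergodicity is not destroyed by the symmetrization and spreading) is delicate, and requires the careful quantitative estimates of Hiai and Izumi~\cite{MR1644299} adapting the group-theoretic construction of~\cite{MR704539} and~\cite{MR630645} to the fusion-algebra setting.
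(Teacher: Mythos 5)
Your overall architecture matches the paper's: the easy direction is handled by Ces\`aro averages of the convolution powers, and the hard direction (existence of a symmetric, fully supported ergodic measure given weak amenability) is, in the paper as well, a pure citation --- it invokes Hayashi--Yamagami \cite{MR1749868}*{Theorem~2.5}, which already delivers the ``furthermore'' clause. Your long sketch of how one would rebuild that construction is in the right spirit but does not add a proof; you should simply cite the existing result (and note that \cite{MR1749868}, not \cite{MR1644299}, is where the fusion-algebra version is actually carried out).

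There is, however, a genuine gap in your easy direction. The intermediate claim that ergodicity forces $\|\delta_s*\mu^n-\mu^n\|_1\to0$ is false without an aperiodicity hypothesis: for $\mu=\tfrac12(\delta_{-1}+\delta_1)$ on $\Z$ (an example the paper itself uses right after Proposition~\ref{prop:p-bdry-as-rel-comm-LR}) the boundary is trivial, yet $\delta_1*\mu^n$ and $\mu^n$ have disjoint supports for every $n$, so the total variation distance is identically $2$. By the $0$--$2$ law this kind of asymptotic $\ell^1$-invariance requires $\mu(e)>0$ or some other aperiodicity input that is not part of the hypothesis. Moreover, your step ``and hence for all $s$ by the generating property'' assumes $\mu$ is generating, which is also not given (only ergodic measures with symmetric support are automatically generating). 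Both problems disappear if you argue as the paper does, following Furstenberg: let $m$ be any weak$^*$ limit point of $n^{-1}\sum_{k=0}^{n-1}\mu^k$; then $m$ is $P_\mu$-stationary, and for each $f\in\ell^\infty(\Irr(\CC))$ the function $s\mapsto m(P_sf)$ is bounded and $P_\mu$-harmonic because $P_\mu P_s=P_{\delta_s*\mu}=\sum_t p_\mu(s,t)P_t$. Ergodicity makes this function constant, and evaluating at $e$ gives $m(P_sf)=m(f)$ for every $s$, i.e.\ $m$ is a right invariant mean --- with no support or periodicity assumptions on $\mu$. You are right, and it is worth saying, that this only yields \emph{weak} amenability and that no Kesten-type spectral argument is available here.
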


\bp If $\mu$ is an ergodic measure, then any weak$^*$ limit point of
the sequence $n^{-1}\sum^{n-1}_{k=0}\mu^k$ defines a right invariant
mean. For random walks on groups this implication was observed by
Furstenberg. The other direction is proved
in~\cite{MR1749868}*{Theorem~2.5}. It is an analogue of a result of
Kaimanovich--Vershik and Rosenblatt.  \ep

It should be remarked that if the fusion algebra of $\CC$ is weakly
amenable and finitely generated, in general it is not possible to find
a finitely supported ergodic
measure~\cite{MR704539}*{Proposition~6.1}.

\smallskip

To finish the section, let us show that, not surprisingly, categorical
Poisson boundaries are of interest only for infinite categories.

\begin{proposition}\label{pfb} Assume $\CC$ is finite, meaning that
$\Irr(\CC)$ is finite, and $\mu$ is generating. Then the Poisson
boundary of $(\CC,\mu)$ is trivial.
\end{proposition}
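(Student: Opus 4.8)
The plan is to show that under the stated hypotheses every bounded $P_\mu$-harmonic natural transformation $\eta\colon\iota\otimes U\to\iota\otimes V$ is of the form $(\iota_X\otimes T)_X$ for some $T\in\CC(U,V)$, which is exactly triviality of the boundary. Since $\Irr(\CC)$ is finite, the von Neumann algebra $\hat\CC(U,V)\cong\ell^\infty\text{-}\bigoplus_s\CC(U_s\otimes U,U_s\otimes V)$ is in fact a \emph{finite-dimensional} vector space; the supremum condition $\sup_X\|\eta_X\|<\infty$ is automatic, so harmonicity $P_\mu(\eta)=\eta$ becomes a finite linear-algebra eigenvalue equation. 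The first step is therefore to reduce, as in the proof of Proposition~\ref{pproduct}, to the case $U=V$ by passing to direct sums, so that $P_\mu$ is a normal unital completely positive map on the finite-dimensional von Neumann algebra $\hat\CC(U)$ and we are asking that its fixed-point space be exactly the image $\CC(U)$ of the constant transformations.

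The key step is an averaging/maximum-principle argument driven by the generating hypothesis. Given a harmonic $\eta$, I would first average against the trace: fixing the normalized categorical traces, harmonicity together with finite dimensionality lets me produce a $P_\mu$-invariant functional, and because $\mu$ is generating the iterates $P_\mu^n$ connect every index $s\in\Irr(\CC)$ to every other through the supports of the convolution powers $\mu^{*n}$. Concretely, I would argue that a bounded harmonic $\eta$ must have constant ``profile'' across the summands: using the explicit formula
$$
P_X(\eta)_Y=(\tr_X\otimes\iota)(\eta_{X\otimes Y}),
$$
naturality of $\eta$ in $X$, and the fact that the trace $\tr_X\otimes\iota$ is faithful and trace-preserving, a harmonic element attains its ``maximum'' on each irreducible component simultaneously. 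The generating condition then forces $\eta_s$ to be determined uniformly by a single morphism in $\CC(U,V)$, because from any simple object one can reach the unit $e$ (with $U_e=\un$) through $\supp\mu^{*n}$, and $\eta_e=\eta_\un$ is a genuine morphism $U\to V$ by definition of $\hat\CC$ at the unit.

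I expect the main obstacle to be converting the generating/ergodicity combinatorics into an honest rigidity statement for the natural transformation rather than just for scalar harmonic functions on $\Irr(\CC)$. The scalar case is clean: when $U=V=\un$, the operator $P_\mu$ on $\hat\CC(\un)=\ell^\infty(\Irr(\CC))$ is the Markov operator of the finite irreducible random walk $p_\mu$, which has a unique stationary distribution and trivial Poisson boundary, so the only harmonic functions are the constants. The difficulty is that for general $U$ the ``values'' $\eta_s$ lie in different finite-dimensional matrix algebras $\CC(U_s\otimes U)$, and I must show the matrix-valued harmonic equation still forces constancy. The cleanest route is probably to invoke the identification of $P_\mu$-invariant elements as a von Neumann subalgebra (Proposition~\ref{pproduct}, via \cite{MR2995726}*{Corollary~5.2}) and combine this with a Perron--Frobenius/irreducibility argument: finiteness guarantees $P_\mu$ has a faithful stationary state, and irreducibility (from generating $\mu$) forces the fixed-point algebra to be minimal, i.e.\ to coincide with $\Pi(\CC(U))$, whence the boundary is trivial.
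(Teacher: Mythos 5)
Your overall strategy --- reduce to $U=V$, use finite-dimensionality of $\hat{\CC}(U)$, and run a maximum-principle argument driven by the generating hypothesis --- matches the paper's, and you correctly isolate the real difficulty: passing from the scalar case to the matrix-valued one, where the components $\eta_s$ live in different algebras $\CC(U_s\otimes U)$. But at exactly that point the proposal stops short of an argument. The assertions that ``a harmonic element attains its maximum on each irreducible component simultaneously'' and that ``irreducibility forces the fixed-point algebra to be minimal, i.e.\ to coincide with $\Pi(\CC(U))$'' restate the conclusion rather than prove it: the fixed-point set of a unital completely positive map with a faithful invariant state is indeed a von Neumann algebra, but nothing in Perron--Frobenius theory for the underlying classical chain tells you that this algebra is no larger than $\CC(U)$. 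That containment is the entire content of the proposition.

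The missing idea, which is what the paper's proof supplies, is a domination claim: for a positive nonzero harmonic $\eta\in\hat{\CC}(U)$ one shows $\eta\ge T$ for some positive nonzero $T\in\End{\CC}{U}$ (viewed as a constant transformation), and then finishes by choosing a maximal such $T$ and applying the claim to $\eta-T$ to force $\eta=T$. The mechanism for the claim is concrete: first $\eta_e\ne0$ (faithfulness of the partial categorical traces plus $s\in\supp\mu^n$ for some $n$); then, writing $S=\eta_e$ and $p=d(s)^{-1}R_sR_s^*\in\End{\CC}{\bar U_s\otimes U_s}$, naturality of $\eta$ applied to the isometry $d(s)^{-1/2}R_s\colon\un\to\bar U_s\otimes U_s$ gives $\eta_{\bar U_s\otimes U_s}\ge p\otimes S$, hence $P_{\bar s}(\eta)_s\ge d(s)^{-2}(\iota\otimes S)$; finiteness of $\Irr(\CC)$ and the generating property then yield a uniform $\lambda>0$ with $\eta_s\ge\iota\otimes\lambda S$ for all $s$. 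It is this use of the conjugate equations to transport the value at $e$ into a lower bound at every $s$ --- not merely the irreducibility of the classical walk --- that makes the matrix-valued maximum principle work, and your proposal does not contain it.
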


\bp The proof is similar to the proof of triviality of the Poisson
boundary of a random walk on a finite set based on the maximum
principle. Fix an object $U$ in $\CC$ and assume that
$\eta\in\hat{\CC}(U)$ is positive and $P_\mu$-harmonic. We claim that
if $\eta\ne0$ then there exists a positive nonzero morphism
$T\in\End{\CC}{U}$ such that $\eta\ge T$. Assuming that the claim is
true, we can then choose a maximal $T$ with this property. Applying
again the claim to the element $\eta-T$, we conclude that $\eta=T$ by
maximality.

In order to prove the claim observe that $\eta_e\in\End{\CC}{U}$ is
nonzero. Indeed, by assumption there exists~$s$ such that
$\eta_s\ne0$. Since the categorical traces are faithful, and therefore
partial categorical traces are faithful completely positive maps, it
follows that $P_s(\eta)_e\ne0$. Since $s\in\supp\mu^n$ for some
$n\ge1$, we conclude that $\eta_e=P_{\mu^n}(\eta)_e\ne0$.

Denote the positive nonzero element $\eta_e\in\End{\CC}{U}$ by
$S$. Fix $s\in\Irr(\CC)$. Let $(R_s,\bar R_s)$ be a standard solution
of the conjugate equations for $U_s$, and $p\in \End{\CC}{\bar
U_s\otimes U_s}$ be the projection defined by $p=d(s)^{-1}R_sR_s^*$. By
naturality of $\eta$ we then have $\eta_{\bar U_s\otimes U_s}\ge
p\otimes S$, whence
$$
P_{\bar s}(\eta)_s\ge(\tr_{\bar s}\otimes\iota)(p)\otimes
S=d(s)^{-2}(\iota\otimes S).
$$
Using the generating property of $\mu$ and finiteness of $\Irr(\CC)$,
we conclude that there exists a number $\lambda>0$ such that
$\eta_s\ge \iota\otimes\lambda S$ for all $s$. This proves the
claim.  \ep

\bigskip

\section{Realizations of the Poisson boundary}

As in the previous section, we fix a strict rigid C$^*$-tensor
category $\CC$ and a probability measure~$\mu$ on~$\Irr(\CC)$. In
Sections~\ref{sec:longo-roberts-appr} and~\ref{sHY} we will in
addition assume that $\mu$ is generating. Let $\Pi\colon\CC\to\PP$ be
the Poisson boundary of $(\CC,\mu)$. Our goal is to give several
descriptions of the algebras~$\PP(U)$ of harmonic elements.

\subsection{Time shift on the categorical path space}
\label{sec:poiss-bound-from}

Fix an object $U$. Denote by $M^{(0)}_U$ the von Neumann algebra
$\hat{\CC}(U)\cong\ell^\infty{\text-}\bigoplus_s\End{\CC}{U_s\otimes
U}$. More generally, for every $n\ge0$ consider the von Neumann
algebra
$$
M^{(n)}_U=\Endb(\iota_{\CC^{n+1}}\otimes U),
$$
so $M^{(n)}_U$
consists of bounded collections
$\eta=(\eta_{X_{n},\dots,X_0})_{X_n,\dots,X_0}$ of natural in
$X_n,\dots,X_0$ endomorphisms of $X_n\otimes\dots \otimes X_0\otimes
U$. We consider $M^{(n)}_U$ as a subalgebra of~$M^{(n+1)}_U$ using the
embedding
$$
(\eta_{X_{n},\dots,X_0})_{X_n,\dots,X_0}\mapsto(\iota_{X_{n+1}}\otimes
\eta_{X_n,X_{n-1},\dots,X_0})_{X_{n+1},\dots,X_0}.
$$
Define a conditional
expectation $E_{n+1,n}\colon M^{(n+1)}_U\to M^{(n)}_U$ by
$$
E_{n+1,n}(\eta)_{X_n,\dots,X_0} = \sum_s \mu(s)(\tr_s\otimes\iota)
(\eta_{U_s,X_n,\dots,X_0}).
$$
Taking compositions of such conditional expectations we get normal
conditional expectations $$E_{n,0}\colon M^{(n)}_U\to M^{(0)}_U.$$
These conditional expectations are not faithful for $n\ge1$ unless the
support of $\mu$ is the entire space $\Irr(\CC)$. The support of
$E_{n,0}$ is a central projection, and we denote by $\M^{(n)}_U$ the
reduction of $M^{(n)}_U$ by this projection. More concretely, we have
a canonical isomorphism
\begin{equation}\label{eq:M-n-U-defn}
  \M^{(n)}_U \cong \ell^\infty\text{-}\bigoplus_{\substack{s_n,\dots,s_1\in\supp\mu\\ s_0\in\Irr(\CC)}}
\End{\CC}{U_{s_n}\otimes\dots\otimes U_{s_0}\otimes U}.
\end{equation} The conditional expectations $E_{n,0}$ define normal
faithful conditional expectations
$\E_{n,0}\colon\M^{(n)}_U\to\M^{(0)}_U=M^{(0)}_U$, and similarly
$E_{n+1,n}$ define conditional expectations $\E_{n+1,n}$. Denote by
$\M_U$ the von Neumann algebra obtained as the inductive
limit of the algebras $\M^{(n)}_U$ with respect to $\E_{n,0}$. In other
words, take any faithful normal state $\phi^{(0)}_U$ on $\M^{(0)}_U$. By
composing it with the conditional expectation $\E_{n,0}$ we get a
state $\phi^{(n)}_U$ on $\M^{(n)}_U$. Together these states define a
state on $\bigcup_n\M^{(n)}_U$. Finally, complete $\bigcup_n\M^{(n)}_U$ to a
von Neumann algebra $\M_U$ in the GNS-representation corresponding to this
state. Denote the corresponding normal state on $\M_U$
by~$\phi_U$.

Note that if we start with a trace on
$\M^{(0)}_U$ which is a convex combination of the traces
$\tr_{U_s\otimes U}$, then the corresponding state $\phi_U$
on $\M_U$ is tracial. Since it is faithful on $\M^{(n)}_U$
for every~$n$, it is faithful on~$\M_U$. This shows that
$\M_U$ is a finite von Neumann algebra. Furthermore, the
$\phi_U$-preserving normal faithful conditional expectation
$\E_n\colon\M_U\to \M^{(n)}_U$ coincides with $\E_{n+1,n}$
on $\M^{(n+1)}_U$. It follows that on the dense algebra
$\bigcup_m\M^{(m)}_U$ the conditional expectation $\E_n$ is the limit, in
the pointwise $s^*$-topology, of $\E_{n+1,n}\E_{n+2,n+1}\dots
\E_{m+1,m}$ as $m\to\infty$. Hence $\E_n$ is independent of the choice
of a faithful normal trace $\phi^{(0)}_U$ as above.

\smallskip

Define a unital endomorphism $\theta_U$ of
$\bigcup_nM^{(n)}_U$ such that $\theta_U(M^{(n)}_U)\subset M^{(n+1)}_U$
by
$$
\theta_U(\eta)_{X_{n+1},\dots,X_0}=\eta_{X_{n+1},\dots,X_2,X_1\otimes
X_0}.
$$
Considering $\M^{(k)}_U$ as a quotient of $M^{(k)}_U$ we get a unital
endomorphism of $\bigcup_n\M^{(n)}_U$.

\begin{lemma} \label{lshift}
The endomorphism $\theta_U$ of $\bigcup_n\M^{(n)}_U$
extends to a normal faithful endomorphism of~$\M_U$, which
we continue to denote by $\theta_U$.
\end{lemma}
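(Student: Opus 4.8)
\smallskip

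The plan is to define the extension by $\theta_U(x)=\lim_n\theta_U(\E_n(x))$, interpreted as a $\sigma$-strong limit, and then to verify that this is a normal faithful $*$-endomorphism. Everything rests on one covariance identity between $\theta_U$ and the conditional expectations, which I would establish first by unwinding the definitions: for $w\in\M^{(n)}_U$ one has
\[
\E_{n+1,n}(\theta_U(w))=\theta_U(\E_{n,n-1}(w)),
\]
since both sides, evaluated at $(X_n,\dots,X_0)$, equal $\sum_s\mu(s)(\tr_s\otimes\iota)(w_{U_s,X_n,\dots,X_2,X_1\otimes X_0})$. Iterating this identity, together with the base computation $\E_{1,0}\theta_U=P_\mu$ on $M^{(0)}_U=\hat\CC(U)$, yields $\E_{n+1,0}\circ\theta_U=P_\mu\circ\E_{n,0}$, and hence the clean formula $\phi_U(\theta_U(w))=\phi^{(0)}_U(P_\mu(\E_{n,0}(w)))$ for every $w\in\M^{(n)}_U$.

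Next I would prove convergence of $y_n:=\theta_U(\E_n(x))\in\M^{(n+1)}_U$ for a fixed $x\in\M_U$. The covariance identity gives $\E_n(y_n)=\theta_U(\E_{n,n-1}\E_n(x))=\theta_U(\E_{n-1}(x))=y_{n-1}$, so the sequence is coherent for the conditional expectations. In particular $\norm{y_{n-1}}_2\le\norm{y_n}_2$ (writing $\norm{a}_2=\phi_U(a^*a)^{1/2}$), while an a priori bound comes for free from the observation that $\phi_U\circ\theta_U$ is again a state, being the composition of the unital $*$-homomorphism $\theta_U$ with the state $\phi_U$: thus $\norm{y_n}_2^2=(\phi_U\circ\theta_U)((\E_n x)^*\E_n x)\le\norm{\E_n x}^2\le\norm{x}^2$. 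Hence $\norm{y_n}_2$ increases to a finite limit, and the Pythagorean relation $\norm{y_n-y_{n-1}}_2^2=\norm{y_n}_2^2-\norm{y_{n-1}}_2^2$ shows that $(y_n)$ is $\norm{\cdot}_2$-Cauchy; since also $\norm{y_n}\le\norm{x}$, it converges $\sigma$-strongly to an element $\theta_U(x)\in\M_U$, which agrees with the original $\theta_U$ on the dense subalgebra $\cup_m\M^{(m)}_U$.

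It then remains to check that the resulting map is a normal faithful unital $*$-endomorphism. Linearity, $*$-preservation, positivity and unitality pass to the $\sigma$-strong limit. For normality I would pass the formula above to the limit to obtain $\phi_U(\theta_U(x))=\phi^{(0)}_U(P_\mu(\E_0(x)))$, manifestly normal in $x$: for a bounded increasing net $x_\alpha\uparrow x$ the positive map $\theta_U$ satisfies $\theta_U(x_\alpha)\uparrow y\le\theta_U(x)$, and normality of $\phi_U\circ\theta_U$ forces $\phi_U(y)=\phi_U(\theta_U(x))$, whence $y=\theta_U(x)$ by faithfulness of $\phi_U$. Multiplicativity then holds on the $\sigma$-strongly dense $*$-algebra $\cup_m\M^{(m)}_U$ and propagates to all of $\M_U$ by the $\sigma$-strong continuity just established. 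Finally, writing $\ker\theta_U=\M_U z$ with $z$ a central projection, the same formula gives $\phi^{(0)}_U(P_\mu(\E_0(z)))=\phi_U(\theta_U(z))=0$; since $P_\mu$ is faithful on $\hat\CC(U)$ (each $P_s$ being built from faithful partial traces) and $\phi^{(0)}_U,\phi_U$ are faithful, this forces $\E_0(z)=0$ and then $z=0$.

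The main obstacle is the $\sigma$-strong convergence of $(y_n)$: the whole argument turns on recognising the coherence $\E_n(y_n)=y_{n-1}$ coming from the covariance identity and pairing it with the a priori $\norm{\cdot}_2$-bound, which itself relies only on the innocuous fact that $\phi_U\circ\theta_U$ is a state. Once convergence is secured, the homomorphism property, normality and faithfulness follow routinely from the explicit formula for $\phi_U\circ\theta_U$.
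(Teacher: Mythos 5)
Your proof is correct, but it takes a genuinely different route from the paper's. Both arguments rest on the same covariance identity $\E_{n+1,n}\theta_U=\theta_U\E_{n,n-1}$ together with the base case $\E_{1,0}\theta_U=P_\mu$ on $\M^{(0)}_U$; the difference lies in the extension mechanism. The paper introduces the normal semifinite faithful trace $\psi_U=\psi^{(0)}_U\E_0$ with $\psi^{(0)}_U=\sum_s d(s)^2\tr_{U_s\otimes U}$, checks that it is $\theta_U$-invariant (this is where the harmonicity identity $\mu*m=m$ for $m=\sum_s d(s)^2\delta_s$ enters), and then extends $\theta_U$ spatially via the isometry $\Lambda_{\psi_U}(x)\mapsto\Lambda_{\psi_U}(\theta_U(x))$ on $L^2(\M_U,\psi_U)$, with normality and faithfulness coming from the spatial implementation. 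You instead extend by martingale convergence, $\theta_U(x)=\lim_n\theta_U(\E_n(x))$, using the coherence $\E_n(y_n)=y_{n-1}$ and a Pythagoras argument in $L^2(\M_U,\phi_U)$, and you read off normality and faithfulness from the formula $\phi_U\theta_U=\phi^{(0)}_U P_\mu\E_0$. Your route buys independence from the choice of weight --- no stationary measure for the reversed walk is needed, only an arbitrary faithful normal state compatible with the inductive limit --- at the cost of verifying the algebraic properties of the limit by hand. The steps you compress are all standard: that a norm-bounded $\norm{\cdot}_2$-Cauchy sequence converges $\sigma$-strongly (apply elements of the commutant to the cyclic vector), and that a normal unital positive map is $\sigma$-strongly continuous on bounded sets via the Kadison--Schwarz inequality. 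The latter deserves to be made explicit, since it is what actually lets multiplicativity pass from the dense $*$-subalgebra (via Kaplansky density) to all of $\M_U$; at that stage of your argument $\theta_U$ is not yet known to be a homomorphism, so one cannot appeal to the automatic strong continuity of normal $*$-homomorphisms.
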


\bp Consider the normal semifinite faithful (n.s.f.)~trace
$\psi^{(0)}_U=\sum_sd(s)^2\tr_{U_s\otimes U}$ on
$$\M^{(0)}_U\cong\ell^\infty\text{-}\bigoplus_s\End{\CC}{U_s\otimes U}$$ and put
$\psi_U=\psi^{(0)}_U\E_0$. Then $\psi_U$ is an n.s.f.~trace. In order
to prove the lemma it suffices to show that the restriction of
$\psi_U$ to $\bigcup_n\M^{(n)}_U$ is $\theta_U$-invariant. Indeed, if the
invariance holds, then we can define an isometry $U$ on
$L^2(\M_U,\psi_U)$ by
$U\Lambda_{\psi_U}(x)=\Lambda_{\psi_U}(\theta_U(x))$ for
$x\in\bigcup_n\M^{(n)}_U$ such that $\psi_U(x^*x)<\infty$. Let $H\subset
L^2(\M_U,\psi_U)$ be the image of~$U$ and $\M$ be the von Neumann
algebra generated by the image of $\theta_U$. Then $H$ is
$\M$-invariant. We can choose $0\le e_i\le1$ such that
$\theta_U(e_i)\to1$ strongly and $\psi_U(e_i)<\infty$. Now, if
$x\in\M_+$ is such that $x|_H=0$, then
$\psi_U(\theta_U(e_i)x\theta_U(e_i))=0$, and by lower semicontinuity
we get $\psi_U(x)=0$, so $x=0$. Therefore we can define $\theta_U$ as
the composition of the map $\M_U\to B(H)$, $x\mapsto UxU^*$, with the
inverse of the map $\M\to \M|_H$.

It remains to check the invariance. By definition we have
$\E_{n+2,n+1}\theta_U=\theta_U\E_{n+1,n}$ on $\M^{(n+1)}_U$ for all
$n\ge0$. This implies that $\E_{n+1}\theta_U=\theta_U\E_n$ on
$\bigcup_k\M^{(k)}_U$. It follows that for any $x\in \bigcup_n\M^{(n)}_U$ we
have
$$
\psi_U\theta_U(x)=\psi_U\E_1\theta_U(x)=\psi_U\theta_U\E_0(x)
=\psi_U\E_0\theta_U\E_0(x).
$$
This implies that it suffices to show that $\psi_U\E_0\theta_U=\psi_U$
on~$\M^{(0)}_U$. Since $\tr_{U_s\otimes U}=\tr_s(\iota\otimes\tr_U)$,
it is enough to consider the case $U=\un$. Note also that
$\E_0\theta_U=P_\mu$ on~$\M^{(0)}_U$. Thus we have to check that
$\psi_\un P_\mu=\psi_\un$ on
$\M^{(0)}_\un\cong\ell^\infty(\Irr(\CC))$. This is equivalent to the
easily verifiable identity $\mu*m=m$, where
$m=\sum_sd(s)^2\delta_s$.  \ep

We call the endomorphism $\theta_U$ of $\M_U$ the {\em time
shift}. Now, take $\eta\in\M^{(0)}_U$. Then for every $n\ge0$ we can
define an element $\eta^{(n)}\in M^{(n)}_U$ by
$$
\eta^{(n)}_{X_n,\dots,X_0}=\eta_{X_n\otimes\dots\otimes X_0}.
$$
Consider the image of $\eta^{(n)}$ in $\M^{(n)}_U$ and denote it again
by $\eta^{(n)}$, since this is the only element we are interested
in. Then $\eta$ is $P_\mu$-harmonic if and only if
$\E_{1,0}(\eta^{(1)})=\eta$, and in this case
$\E_{n+1,n}(\eta^{(n+1)})=\eta^{(n)}$ for all $n$. Therefore if $\eta$
is $P_\mu$-harmonic, then the sequence $\{\eta^{(n)}\}_n$ is a
martingale. Denote by $\eta^{(\infty)}\in\M_U$ its $s^*$-limit.

\begin{proposition} \label{prop:lr-emb} The map
$\eta\mapsto\eta^{(\infty)}$ is an isomorphism between the von Neumann
algebra $\PP(U)$ of $P_\mu$-harmonic bounded natural transformations
$\iota\otimes U\to\iota\otimes U$ and the fixed point algebra
$\M^{\theta_U}_U$. The inverse map is given by $x\mapsto \E_0(x)$.
\end{proposition}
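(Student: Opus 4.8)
The plan is to realize both maps concretely through the time shift, reducing everything to the two identities established in the proof of Lemma~\ref{lshift}. First I would record the bookkeeping identity $\eta^{(n)}=\theta_U^n(\eta)$ for $\eta\in\M^{(0)}_U$: straight from the definitions, $\theta_U(\eta^{(n)})_{X_{n+1},\dots,X_0}=\eta^{(n)}_{X_{n+1},\dots,X_2,X_1\otimes X_0}=\eta_{X_{n+1}\otimes\dots\otimes X_0}=\eta^{(n+1)}_{X_{n+1},\dots,X_0}$, while $\eta^{(0)}=\eta$. Alongside this I would recall from the proof of Lemma~\ref{lshift} the intertwiner $\E_{n+1}\theta_U=\theta_U\E_n$ on $\M_U$ (valid on $\cup_k\M^{(k)}_U$ and extended by normality), together with $\E_{1,0}\theta_U=P_\mu$ on $\M^{(0)}_U$. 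Iterating the first gives $\E_n\theta_U^n=\theta_U^n\E_0$, and combining $\E_0=\E_{1,0}\E_1$ with both relations yields the key identity $\E_0\theta_U=P_\mu\E_0$ on all of $\M_U$, whence $P_\mu^n=\E_0\theta_U^n$ on $\M^{(0)}_U$ by induction.

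Next I would check that $\eta\mapsto\eta^{(\infty)}$ lands in $\M^{\theta_U}_U$ and that $\E_0$ inverts it there. Since $\theta_U$ is normal (Lemma~\ref{lshift}) and the martingale $\{\eta^{(n)}\}$ is bounded, $\theta_U$ commutes with the $s^*$-limit, so $\theta_U(\eta^{(\infty)})=s^*\text{-}\lim_n\theta_U(\eta^{(n)})=s^*\text{-}\lim_n\eta^{(n+1)}=\eta^{(\infty)}$; thus $\eta^{(\infty)}\in\M^{\theta_U}_U$. For the inverse on this side, harmonicity of $\eta$ gives the martingale identity $\E_{n,0}(\eta^{(n)})=\eta$, i.e.\ $\E_0(\eta^{(n)})=\eta$ for every $n$, so normality of $\E_0$ yields $\E_0(\eta^{(\infty)})=\eta$.

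For the opposite inverse, take $x\in\M^{\theta_U}_U$ and set $\eta=\E_0(x)$. Harmonicity of $\eta$ is immediate from the intertwiner, $P_\mu(\eta)=P_\mu\E_0(x)=\E_0\theta_U(x)=\E_0(x)=\eta$. To see $\eta^{(\infty)}=x$, I would use that $x$ is fixed to compute $\E_n(x)=\E_n\theta_U^n(x)=\theta_U^n\E_0(x)=\theta_U^n(\eta)=\eta^{(n)}$; since $\E_n\to\mathrm{id}_{\M_U}$ in the pointwise $s^*$-topology (noncommutative martingale convergence, as $\cup_n\M^{(n)}_U$ is $s^*$-dense in the finite algebra $\M_U$), this forces $x=s^*\text{-}\lim_n\E_n(x)=s^*\text{-}\lim_n\eta^{(n)}=\eta^{(\infty)}$. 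With the previous paragraph this establishes the bijection, and $*$-linearity of both maps is clear.

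It remains to match the products, and this is the step I expect to carry the real content, since $\PP(U)$ carries the Choi--Effros harmonic product $\eta\cdot\nu=s^*\text{-}\lim_nP^n_\mu(\eta\nu)$ rather than the product of $\M^{(0)}_U$. For $x,y\in\M^{\theta_U}_U$ with $\eta=\E_0(x)$, $\nu=\E_0(y)$, I would use $P_\mu^n=\E_0\theta_U^n$ on $\M^{(0)}_U$, multiplicativity of $\theta_U$, and $\theta_U^n(\eta)=\E_n(x)$, $\theta_U^n(\nu)=\E_n(y)$ to obtain
$$P^n_\mu(\eta\nu)=\E_0\bigl(\theta_U^n(\eta)\,\theta_U^n(\nu)\bigr)=\E_0\bigl(\E_n(x)\,\E_n(y)\bigr).$$
Letting $n\to\infty$ and invoking joint $s^*$-continuity of multiplication on bounded sets together with normality of $\E_0$, the right-hand side converges to $\E_0(xy)$, so $\eta\cdot\nu=\E_0(xy)$. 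As $xy\in\M^{\theta_U}_U$ and $\E_0$ is the inverse bijection, this reads $(\eta\cdot\nu)^{(\infty)}=xy=\eta^{(\infty)}\nu^{(\infty)}$, proving multiplicativity; a bijective $*$-homomorphism between von Neumann algebras is automatically an isomorphism. The main obstacles are the honest use of noncommutative martingale convergence to identify the $s^*$-limits, and the continuity argument in this last display, which is what genuinely reconciles the harmonic product with the operator product in $\M_U$.
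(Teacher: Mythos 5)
Your argument is correct, and for the bijection itself it is the same as the paper's: the identity $\eta^{(n)}=\theta_U^n(\eta)$, the intertwining relations $\E_{n+1}\theta_U=\theta_U\E_n$ and $\E_0\theta_U=P_\mu$ on $\M^{(0)}_U$ from the proof of Lemma~\ref{lshift}, and martingale convergence $\E_n(x)\to x$ are exactly the ingredients used there. The only divergence is at the end. The paper disposes of multiplicativity in one line: the two maps are mutually inverse unital completely positive bijections, hence automatically $*$-isomorphisms (a Schwarz-inequality/multiplicative-domain fact). You instead verify by hand that $P_\mu^n(\eta\nu)=\E_0\bigl(\E_n(x)\E_n(y)\bigr)\to\E_0(xy)$, so that the Choi--Effros product $\cdot$ corresponds to the operator product on $\M_U^{\theta_U}$. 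Your computation is sound (it needs joint $s^*$-continuity of multiplication on bounded sets and $s^*$-continuity of the normal map $\E_0$ on bounded sets, both standard), and it is in fact precisely the content of the remark the authors place immediately after the proposition, where they observe that this identification yields an alternative proof of Proposition~\ref{pproduct}. What the paper's abstract argument buys is brevity and independence from any continuity bookkeeping; what your explicit version buys is a concrete formula $\eta\cdot\nu=\E_0(\eta^{(\infty)}\nu^{(\infty)})$ reconciling the harmonic product with the ambient operator product, which is useful in its own right.
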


\bp By definition we have $\eta^{(n)}=\theta^n_U(\eta)$. It follows
that if $\eta$ is $P_\mu$-harmonic, so that
$\eta^{(n)}\to\eta^{(\infty)}$, then the element $\eta^{(\infty)}$ is
$\theta_U$-invariant. We also clearly have
$\E_0(\eta^{(\infty)})=\eta$.

Conversely, take $x\in\M_U^{\theta_U}$. The proof of
Lemma~\ref{lshift} implies that $\E_{n+1}\theta_U=\theta_U\E_n$. Hence
the martingale $\{x_n=\E_n(x)\}_n$ has the property
$x_{n+1}=\theta_U(x_n)$. As $\E_0\theta_U=P_\mu$ on~$\M^{(0)}_U$, we
conclude that $x_0$ is $P_\mu$-harmonic and $x_0^{(\infty)}=x$.

We have thus proved that the maps in the assertion are inverse to
each other. Since they are unital completely positive, they must be
isomorphisms.  \ep

The bijection between $\PP(U)$ and $\M^{\theta_U}_U$ could be used to
give an alternative proof of Proposition~\ref{pproduct}. Namely, we
could define a product $\cdot$ on harmonic elements by
$\nu\cdot\eta=\E_0(\nu^{(\infty)}\eta^{(\infty)})$. Since
$\nu^{(\infty)}\eta^{(\infty)}$ is the $s^*$-limit of the elements
$\nu^{(n)}\eta^{(n)}=(\nu\eta)^{(n)}$, and
$\E_0((\nu\eta)^{(n)})=P^n_\mu(\nu\eta)$, it follows that
$P^n_\mu(\nu\eta)\to \nu\cdot\eta$ in the $s^*$-topology, which is
equivalent to saying that $P^n_\mu(\nu\eta)_X\to (\nu\cdot\eta)_X$ for
every $X$.

\subsection{Relative commutants: Izumi--Longo--Roberts approach}
\label{sec:longo-roberts-appr}

We will now modify the construction of the algebras $\M_U$ to get
algebras $\vNN_U$ and an identification of $\PP(U)$ with
$\vNN_\un'\cap\vNN_U$. Conceptually, instead of considering all paths
of the random walk defined by $\mu$, we consider only paths starting
at the unit object. The time shift is no longer defined on this space,
but by considering a larger space we can still get a description of
$\PP(U)$ in simple von Neumann algebraic terms. For this to work we
have to assume that $\mu$ is generating, so that we can reach any
simple object from the unit.

This identification of harmonic elements is closely related to Izumi's
description of Poisson boundaries of discrete quantum
groups~\cite{MR1916370}. A similar construction was also used by Longo
and Roberts using sector theory~\cite{MR1444286}. More precisely, they
worked with a somewhat limited form of $\mu$ and what we obtain is a
possibly infinite von Neumann algebra for what corresponds to the
finite gauge-invariant von Neumann subalgebra in their work.

\smallskip

We first put $V = \bigoplus_{s \in \supp \mu} U_s$. In the case $\supp
\mu$ is infinite, this should be understood only as a suggestive
notation which does not make sense inside $\CC$. Given an object $U$,
by $\CC(V^{\otimes n}\otimes U)$ we understand the space
$$
\bigoplus_{s_*, s'_* \in \supp \mu^n} \CC(U_{s_n} \otimes \cdots
\otimes U_{s_1} \otimes U, U_{s'_n} \otimes \cdots\otimes U_{s'_1}
\otimes U)
$$
endowed with the obvious $*$-algebra structure. Similarly to
Section~\ref{sec:poiss-bound-from} we have completely positive maps
$$
\E_{n+1,n}=\sum_s \mu(s) (\tr_s \otimes \iota)\colon \CC(V^{\otimes
(n+1)}\otimes U)\to \CC(V^{\otimes n}\otimes U),
$$
and taking the composition of these maps we get maps
$$
\E_{n,0}\colon \CC(V^{\otimes n}\otimes U)\to \CC(U).
$$
Then $\omega_U^{(n)} = \tr_U \E_{n,0}$ is a state on $\CC(V^{\otimes
n}\otimes U)$. We denote by $\vNN^{(n)}_U$ the von Neumann algebra
generated by $\CC(V^{\otimes n}\otimes U)$ in the GNS-representation
defined by this state. The elements of $\vNN_U^{(n)}$ are represented
by certain bounded families in the direct product of the morphism
sets $$\CC(U_{s_n} \otimes \cdots \otimes U_{s_1} \otimes U, U_{s'_n}
\otimes \cdots \otimes U_{s'_1} \otimes U).$$ Since the positive
elements of $\vNN_U^{(n)}$ have positive diagonal entries, the state
$\omega_U^{(n)}$ is faithful on $\vNN_U^{(n)}$.

There is a natural diagonal embedding $\vNN_U^{(n)} \rightarrow
\vNN_U^{(n+1)}$ defined by $T \mapsto \iota_V \otimes T$. The map
$\E_{n+1,n}$ extends then to a normal conditional expectation
$\vNN_U^{(n+1)} \rightarrow \vNN_U^{(n)}$ such that
$\omega^{(n)}_U\E_{n+1,n}=\omega^{(n+1)}_U$. This way we obtain an
inductive system $(\vNN_U^{(n)}, \omega_U^{(n)})_n$ of von Neumann
algebras, and we let $(\vNN_U, \omega_U)$ be the von Neumann algebra
and the faithful state obtained as the limit. As in
Section~\ref{sec:poiss-bound-from}, composing the conditional
expectations $\E_{n+1,n}$ and passing to the limit we get
$\omega_U$-preserving conditional expectations
$\E_n\colon\vNN_U\to\vNN^{(n)}_U$.

\smallskip

When $U = \un$, we simply write $\vNN^{(n)}$ and $\vNN$ instead of
$\vNN^{(n)}_\un$ and $\vNN_\un$. If $U'$ and $U$ are objects in~$\CC$,
then the map $x\mapsto x\otimes\iota_U$ defines an embedding
$\vNN_{U'}\hookrightarrow\vNN_{U'\otimes U}$. In particular, the
algebra $\vNN$ is contained in any of $\vNN_U$.

\smallskip

When $\eta$ is a natural transformation in $\hat\CC(U)$,
the morphism $$\eta_{V^{\otimes n}} = \bigoplus_{s_*} \eta_{U_{s_n}
\otimes \cdots \otimes U_{s_1}}$$ defines an element in the diagonal
part of $\vNN_U^{(n)}$, which we denote by $\eta^{[n]}$.  Note that
the direct summand $s_0 = e$ of~\eqref{eq:M-n-U-defn} can be
identified with the diagonal part of $\vNN_U^{(n)}$, and $\eta^{[n]}$
simply becomes the component of $\eta^{(n)}$ in this summand.  If
$\eta$ is $P_\mu$-harmonic, the sequence $\{\eta^{[n]}\}_n$ forms a
martingale and defines an element $\eta^{[\infty]} \in
\vNN_U^{(\infty)}$.

\begin{proposition}
\label{prop:p-bdry-as-rel-comm-LR0} For every object $U$ in $\CC$, the
map $\eta \mapsto\eta^{[\infty]}$ defines an isomorphism of von
Neumann algebras $\End{\PP}{U}\cong \vNN'\cap\vNN_{U}$.
\end{proposition}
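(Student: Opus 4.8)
The plan is to exhibit $\eta\mapsto\eta^{[\infty]}$ as a normal, unital, completely positive map onto $\vNN'\cap\vNN_U$ whose inverse is again unital completely positive; by the principle already used in Proposition~\ref{prop:lr-emb} (a unital completely positive bijection with completely positive inverse is a $*$-isomorphism) this proves the statement. The backbone is a finite-level computation of the relative commutants. Decomposing $V^{\otimes n}$ into isotypic components and using that $\vNN^{(n)}$ acts inside $\vNN^{(n)}_U$ through $x\mapsto x\otimes\iota_U$, I would first establish
\[
A_n:=(\vNN^{(n)})'\cap\vNN^{(n)}_U\cong\ell^\infty\text{-}\bigoplus_{t\in\supp\mu^n}\End{\CC}{U_t\otimes U},
\]
and that under this isomorphism $\eta^{[n]}$ is exactly the tuple $(\eta_t)_{t\in\supp\mu^n}$; in other words $\eta^{[n]}$ records the restriction of the natural transformation $\eta$ to the simple objects reachable in $n$ steps. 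The second basic ingredient is the compatibility of the tower conditional expectations with the Markov operator, namely $\E_{n+1,n}(\eta^{[n+1]})=(P_\mu\eta)^{[n]}$, which is immediate from $P_\mu=\sum_s\mu(s)P_s$ and the description of $P_s$ as a partial categorical trace.

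Next I would verify that the forward map is a well-defined normal $*$-homomorphism into $\vNN'\cap\vNN_U$. Naturality of $\eta$ shows that $\eta^{[n]}$ commutes with the image of $\vNN^{(n)}$, and since $\vNN=\overline{\cup_m\vNN^{(m)}}$, passing to the $s^*$-limit gives $\eta^{[\infty]}\in\vNN'\cap\vNN_U$. When $\eta$ is $P_\mu$-harmonic the displayed identity makes $\{\eta^{[n]}\}_n$ a martingale, so $\eta^{[\infty]}$ exists and $\E_n(\eta^{[\infty]})=\eta^{[n]}$. Multiplicativity reduces to the observation that diagonal elements multiply pointwise, $\nu^{[n]}\eta^{[n]}=(\nu\eta)^{[n]}$, combined with $\E_{m,n}((\nu\eta)^{[m]})=(P^{m-n}_\mu(\nu\eta))^{[n]}\to(\nu\cdot\eta)^{[n]}$, which matches $\E_n(\nu^{[\infty]}\eta^{[\infty]})$ with $\E_n((\nu\cdot\eta)^{[\infty]})$ for every $n$ and hence identifies the two limits. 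Injectivity is where the generating hypothesis enters: if $\eta^{[\infty]}=0$ then $\eta^{[n]}=\E_n(\eta^{[\infty]})=0$ for every $n$, hence $\eta_t=0$ for all $t\in\supp\mu^n$, and since $\cup_n\supp\mu^n=\Irr(\CC)$ we conclude $\eta=0$.

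Surjectivity is the step I expect to be the main obstacle. Given $x\in\vNN'\cap\vNN_U$, the conditional expectations produce a martingale $x_n=\E_n(x)$, and because $\E_n$ is $\vNN^{(n)}$-bimodular each $x_n$ lies in $A_n$ and thus corresponds, via the isomorphism above, to a bounded natural transformation $\tilde\eta^{(n)}$ on $\supp\mu^n$. The delicate point is that $\E_{n+1,n}$ acts on these tuples as $P_\mu$ rather than as the identity, so the relation $\tilde\eta^{(n)}=P_\mu\tilde\eta^{(n+1)}$ is only an averaging relation and the families need not have level-independent components; moreover $\supp\mu^n$ is not monotone in $n$. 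Hence one cannot simply read off a harmonic transformation at a single level. My plan is to use that $x$ is central for the \emph{entire} algebra $\vNN$ (not merely for the finite levels) together with the $s^*$-convergence $x_n\to x$ to run a Poisson-integral/martingale-convergence argument in the spirit of Izumi~\cite{MR2995726}: the compatible family $\{\tilde\eta^{(n)}\}$ determines a single $P_\mu$-harmonic $\eta$ (its Poisson integral) with $\eta^{[\infty]}=x$, and this construction simultaneously shows that the inverse map $x\mapsto\eta$ is normal, unital and completely positive. With both maps completely positive, the asserted $*$-isomorphism then follows exactly as in Proposition~\ref{prop:lr-emb}.
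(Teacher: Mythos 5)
Your finite-level setup is exactly the paper's: the identification of $(\vNN^{(n)})'\cap\vNN^{(n)}_U$ with $\ell^\infty\text{-}\bigoplus_{t\in\supp\mu^n}\End{\CC}{U_t\otimes U}$, the fact that $\eta^{[n]}$ is the tuple $(\eta_t)_t$ with multiplicities, the compatibility $\E_{n+1,n}(\eta^{[n+1]})=(P_\mu\eta)^{[n]}$, the forward map landing in $\vNN'\cap\vNN_U$ by naturality, injectivity from the generating hypothesis, and the final appeal to "unital completely positive bijection with completely positive inverse is a $*$-isomorphism" are all correct and coincide with the paper's argument. (The paper does not even bother with your direct multiplicativity check; it relies only on the last principle.)

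The genuine gap is precisely the step you flag and then defer to an unspecified ``Poisson-integral/martingale-convergence argument'': gluing the level-$n$ data into a single well-defined element of $\hat\CC(U)$. Centrality of $x$ in all of $\vNN$ gives nothing beyond centrality at every finite level (the finite levels generate $\vNN$), so the only information available is a bounded family $\eta(n)\in\hat\CC(U)$ with $x_n=\eta(n)^{[n]}$ and the averaging relation $P_\mu(\eta(n+1))_s=\eta(n)_s$ for $s\in\supp\mu^n$; martingale convergence of $x_n\to x$ does not by itself produce a component $\eta_s$ for a fixed $s$. The paper's resolution is to \emph{define} $\eta_s=\eta(n)_s$ whenever $s\in\supp\mu^n$ and to prove consistency by a quantitative input you never invoke: the $0$--$2$ law (\cite{MR2034922}*{Proposition~2.12}), which gives $\|P_\mu^m-P_\mu^{m+k}\|\to0$ whenever $\supp\mu^n\cap\supp\mu^{n+k}\neq\emptyset$ for some $n$. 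Writing $\eta(n)_s=P_\mu^{m+k}(\eta(n+m+k))_s$ and $\eta(n+k)_s=P_\mu^{m}(\eta(n+m+k))_s$ and using boundedness of $\{\eta(m)\}_m$, one lets $m\to\infty$ to get $\eta(n)_s=\eta(n+k)_s$; harmonicity and $x=\eta^{[\infty]}$ then follow. Without the $0$--$2$ law (or an equivalent aperiodicity estimate) your plan does not close: the relation $\tilde\eta^{(n)}=P_\mu\tilde\eta^{(n+1)}$ alone is compatible with the levels disagreeing at a common $s$, so this is the missing idea rather than a routine limit argument.
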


\begin{proof} If $\eta$ is a harmonic element in $\hat\CC(U)$, the
naturality implies that the elements $\eta_{V^{\otimes m}}$ commute
with the image of $\End{\CC}{V^{\otimes n}}$ for $m \ge n$.  Thus,
$\eta^{[\infty]} = \lim_m \eta_{V^{\otimes m}}$ is in the relative
commutant.  Since~$\mu$ is generating, it is also clear that the map
$\eta \mapsto \eta^{[\infty]}$ is injective.

To construct the inverse map, take an element
$x\in\vNN'\cap\vNN_{U}$.  Then $x_n = \E_n(x)$ is an element of
$(\vNN^{(n)})' \cap \vNN_{U}^{(n)}$.  Hence, for every $n\ge1$ and
$s\in\supp\mu^n$, there is a morphism $x_{n, s} \in \End{\CC}{U_s
\otimes U}$ such that $x_n$ is the direct sum of the $x_{n, s}$ (with
multiplicities). It follows that we can choose $\eta(n)\in \hat\CC(U)$
such that $\|\eta(n)\|\le\|x\|$ and $x_n=\eta(n)^{[n]}$. The elements
$\eta(n)$ are not uniquely determined, only their components
corresponding to $s\in\supp\mu^n$ are. The identity $\E_{n+1,
n}(x_{n+1}) = x_n$ translates into $P_\mu(\eta(n+1))_s=\eta(n)_s$ for
$s\in\supp\mu^n$.

We now define an element $\eta\in\hat\CC(U)$ by letting
$$
\eta_s=\eta(n)_s\ \ \text{if}\ \ s\in\supp\mu^n\ \ \text{for some}\ \
n\ge1.
$$
In order to see that this definition in unambiguous, assume
$s\in(\supp\mu^n)\cap(\supp\mu^{n+k})$ for some~$n$ and~$k$.  Then by
the $0$-$2$ law, see~\cite{MR2034922}*{Proposition~2.12}, we have
$\|P^m_\mu-P^{m+k}_\mu\|\to0$ as $m\to\infty$.  Since the sequence
$\{\eta(m)\}_m$ is bounded and we have
$\eta(n)_s=P^{m+k}_\mu(\eta(n+m+k))_s$ and
$\eta(n+k)_s=P^{m}_\mu(\eta(n+m+k))_s$, letting $m\to\infty$ we
conclude that $\eta(n)_s=\eta(n+k)_s$. Hence $\eta$ is well-defined,
$P_\mu$-harmonic, and $x_n=\eta^{[n]}$. Therefore $x=\eta^{[\infty]}$.

The linear isomorphism $\PP(U)\to\vNN'\cap\vNN_U$ and its inverse that
we have constructed, are unital and completely positive, hence they
are isomorphisms of von Neumann algebras.  \ep

As in the case of Proposition~\ref{prop:lr-emb}, the linear
isomorphism $\PP(U)\cong\vNN'\cap\vNN_U$ could be used to give an
alternative proof of Proposition~\ref{pproduct}, at least for
generating measures.

\smallskip

Applying Proposition~\ref{prop:p-bdry-as-rel-comm-LR0} to $U=\un$ we
get the following.

\begin{corollary}\label{cor:ergod-factor} The von Neumann algebra
$\vNN$ is a factor if and only if $\mu$ is ergodic.
\end{corollary}

Under a mildly stronger assumption on the measure we can prove a
better result than Proposition~\ref{prop:p-bdry-as-rel-comm-LR0},
which will be important later.

\begin{proposition}
\label{prop:p-bdry-as-rel-comm-LR} Assume that for any
$s,t\in\Irr(\CC)$ there exists $n\ge0$ such
that $$\supp(\mu^n*\delta_s)\cap\supp(\mu^n*\delta_t)\ne\emptyset.$$
Then for any objects $U$ and $U'$ in $\CC$, the map $\eta \mapsto
(\iota_{U'} \otimes \eta)^{[\infty]}$ defines an isomorphism of von
Neumann algebras $\End{\PP}{U}\cong \vNN_{U'}'\cap\vNN_{U' \otimes
U}$.
\end{proposition}

\bp That we get a map $\End{\PP}{U}\to \vNN_{U'}'\cap\vNN_{U' \otimes
U}$ does not require any assumptions on $\mu$ and is easy to see: if
$\eta$ is a harmonic element in $\hat\CC(U)$, the
naturality implies that the elements $\eta_{V^{\otimes m} \otimes U'}$
commute with $\End{\CC}{V^{\otimes n} \otimes U'}$ for $m \ge n$, and
hence $(\iota_{U'} \otimes \eta)^{[\infty]} = \lim_m \eta_{V^{\otimes
m} \otimes U'}$ lies in $ \vNN_{U'}'\cap\vNN_{U' \otimes U}$.

To construct the inverse map assume first $U'=U_t$ for some $t$. Take
$x\in \vNN_{U'}'\cap\vNN_{U' \otimes U}$. Similarly to the proof of
Proposition~\ref{prop:p-bdry-as-rel-comm-LR0} we can find elements
$\eta(n)\in\hat{\CC}(U)$ such that $\|\eta(n)\|\le\|x\|$ and
$\E_n(x)=(\iota_{U'}\otimes\eta(n))^{[n]}$.  The identity $\E_{n+1,
n}(x_{n+1}) = x_n$ means now that $P_\mu(\eta(n+1))_s=\eta(n)_s$ for
$s\in\supp(\mu^n*\delta_t)$. We want to define an element
$\eta\in\hat\CC(U)$ by
$$
\eta_s=\eta(n)_s\ \ \text{if}\ \ s\in\supp(\mu^n*\delta_t)\ \
\text{for some}\ \ n\ge1.
$$
As in the proof of Proposition~\ref{prop:p-bdry-as-rel-comm-LR0}, in
order to see that $\eta$ is well-defined, it suffices to show that if
$s\in\supp(\mu^n*\delta_t)\cap\supp(\mu^{n+k}*\delta_t)$ for some~$n$
and~$k$, then $\|P^m_\mu-P^{m+k}_\mu\|\to0$ as $m\to\infty$. Since
$\mu$ is assumed to be generating, there exists $l$ such that
$t\in\supp\mu^l$. But then
$$
s\in (\supp\mu^{n+l})\cap(\supp\mu^{n+l+k}),
$$
so the convergence $\|P^m_\mu-P^{m+k}_\mu\|\to0$ indeed holds by the
$0$-$2$ law. This finishes the proof of the proposition for $U'=U_t$,
and we see that no assumption in addition to the generating property
of $\mu$ is needed in this case.

\smallskip

Consider now an arbitrary $U'$. Decompose $U'$ into a direct sum of
simple objects:
$$
U' \cong U_{s_1}\oplus\dots\oplus U_{s_n}.
$$
Denote by $p_i\in\CC(U')$ the corresponding projections. Then the
inclusion $p_i\vNN_{U'}p_i\subset p_i\vNN_{U'\otimes U}p_i$ can be
identified with $\vNN_{U_{s_i}}\subset \vNN_{U_{s_i}\otimes U}$.

Take $x\in \vNN_{U'}'\cap\vNN_{U'\otimes U}$. Then $x$ commutes with
$p_i$. Since the element $xp_i$ lies in
$\vNN_{U_{s_i}}'\cap\vNN_{U_{s_i}\otimes U}$, it is defined by a
$P_\mu$-harmonic element $\eta(i)\in\hat{\CC}(U)$. In terms
of these elements the condition that $\E_n(x)$ commutes with
$\CC(V^{\otimes n}\otimes U')$ means that $\eta(i)_s=\eta(j)_s$
whenever $s\in\supp(\mu^n*\delta_{s_i})\cap\supp(\mu^n*\delta_{s_j})$,
while to finish the proof we need the equality $\eta(i)=\eta(j)$.

Fix $s\in\Irr(\CC)$ and indices $i$ and $j$. By assumption there
exists $t\in\supp(\mu^n*\delta_{s_i})\cap\supp(\mu^n*\delta_{s_j})$
for some $n$. Since $\mu$ is generating, there exists $m$ such that
$s\in\supp(\mu^m*\delta_t)$. Then
$$
s\in \supp(\mu^{m+n}*\delta_{s_i})\cap\supp(\mu^{m+n}*\delta_{s_j}),
$$
and therefore $\eta(i)_s=\eta(j)_s$.  \ep

Note that the proof shows that the additional assumption on the
measure is not only sufficient but also necessary for the result to
be true. Even for symmetric ergodic measures this condition does not
always hold: take the random walk on $\Z$ defined by the measure
$\mu=2^{-1}(\delta_{-1}+\delta_1)$. At the same time this condition is
satisfied, for example, for any generating measure $\mu$ with
$\mu(e)>0$. Indeed, for such a measure we can find $n$ such that
$s\in\supp(\mu^n*\delta_t)$, and then $s\in\supp(\mu^n*\delta_s)\cap
\supp(\mu^n*\delta_t)$.

\smallskip

Applying the proposition to $U=\un$ we get the following result.

\begin{corollary}\label{cor:engod-factor} Assume $\mu$ is ergodic and
satisfies the assumption of
Proposition~\ref{prop:p-bdry-as-rel-comm-LR}. Then $\vNN_U$ is a
factor for every object $U$ in $\CC$.
\end{corollary}

\begin{remark}\label{rmultiplicity} It is sometimes convenient to
consider slightly more general constructions allowing
multiplicities. Namely, instead of $V=\bigoplus_{s\in\supp\mu}U_s$ we
could take $V=\bigoplus_{i\in I}U_{s_i}$, where $(s_i)_{i\in I}$ is any
finite or countable collection of elements running through
$\supp\mu$. For the state on $\CC(V)$ we could take
$\CC(U_{s_i},U_{s_j})\ni T \mapsto\delta_{ij}\lambda_i\tr_{s_i}(T)$,
where $\lambda_i>0$ are any numbers such that $\sum_{i\colon
s_i=s}\lambda_i=\mu(s)$ for all $s\in\supp\mu$. All the above results
would remain true, with essentially identical proofs.
\end{remark}

\subsection{Relative commutants: Hayashi--Yamagami
  approach} \label{sHY}

We will now explain a modification of the Izumi--Longo--Roberts
construction due to Hayashi and Yamagami~\cite{MR1749868}. Its
advantage is that, at the expense of introducing an extra variable in
a II$_1$ factor, we can stay in the framework of finite von Neumann
algebras.

\smallskip

We continue to assume that $\mu$ is generating. We will use a slightly
different notation compared~\cite{MR1749868} to be more consistent
with the previous sections.

Let $\mathcal{R}$ be the hyperfinite II$_1$-factor and $\tau$ be the
unique normal tracial state on $\mathcal{R}$. Choose a partition of
unity by projections $(e_s)_{s \in \supp \mu}$ in $\RR$ which satisfy
$$
\tau(e_s) = \frac{\mu(s)}{c d(s)},\ \ \text{where}\ \
c=\sum_{s\in\supp\mu}\frac{\mu(s)}{ d(s)}.
$$
When $(s_n, \ldots, s_1) \in (\supp \mu)^n$, we write $e_{s_*} =
e_{s_n} \otimes \cdots \otimes e_{s_1} \in \mathcal{R}^{\otimes
n}$. As in Section~\ref{sec:longo-roberts-appr}, put
$V=\bigoplus_{s\in\supp\mu}U_s$. Now, for a fixed object $U$ in $\CC$,
instead of the algebra $\CC(V^{\otimes n}\otimes U)$ used there,
consider the algebra
$$
\tilde\CC(V^{\otimes n}\otimes U)=\bigoplus_{s_*, s'_* \in (\supp
\mu)^n} \CC(U_{s_n} \otimes \cdots \otimes U_{s_1} \otimes U, U_{s'_n}
\otimes \cdots \otimes U_{s'_1} \otimes U) \otimes e_{s'_*}
\mathcal{R}^{\otimes n} e_{s_*}.
$$
It carries a tracial state $\tau^{(n)}_U$ defined by
$$
\tau^{(n)}_U(T\otimes x)=\delta_{s_*,s'_*}c^nd(s_1)\dots
d(s_n)\tr_{U_{s_n} \otimes\dots\otimes U_{s_1}\otimes
U}(T)\tau^{\otimes n}(x)
$$
for $T\otimes x\in \CC(U_{s_n} \otimes \cdots \otimes U_{s_1} \otimes
U, U_{s'_n} \otimes \cdots \otimes U_{s'_1} \otimes U) \otimes
e_{s'_*} \mathcal{R}^{\otimes n} e_{s_*}$. Let $\A^{(n)}_U$ be the von
Neumann algebra generated by $\tilde\CC(V^{\otimes n}\otimes U)$ in
the GNS-representation defined by $\tau^{(n)}_U$. These algebras form
an inductive system under the embeddings
$$
\A^{(n)}_U\hookrightarrow\A^{(n+1)}_U,\ \ T\otimes x\mapsto
\sum_{s\in\supp\mu}(\iota_s\otimes T)\otimes(e_s\otimes x).
$$
Passing to the limit we get a von Neumann algebra $\A_U$ equipped with
a faithful tracial state $\tau_U$. We write $\A$ for $\A_\un$.

Given $\eta\in\hat{\CC}(U)$, consider the elements
$$
\eta^{\{n\}}=\sum_{s_*\in(\supp \mu)^n} \eta_{U_{s_n} \otimes \cdots
\otimes U_{s_1}}\otimes e_{s_*}\in\A^{(n)}_U.
$$
If $\eta$ is $P_\mu$-harmonic, then the sequence $\{\eta^{\{n\}}\}_n$
forms a martingale with respect to the $\tau_U$-preserving conditional
expectations $\E_n\colon\A_U\to\A^{(n)}_U$. Denote its limit by
$\eta^{\{\infty\}}$. Then we get the following analogues of
Propositions~\ref{prop:p-bdry-as-rel-comm-LR0}
and~\ref{prop:p-bdry-as-rel-comm-LR}, with almost identical proofs,
which we omit.

\begin{proposition}\label{prop:PP-equals-HY} For every object $U$ in
$\CC$, the map $\eta \mapsto\eta^{\{\infty\}}$ defines an isomorphism
of von Neumann algebras $\End{\PP}{U}\cong \A'\cap\A_{U}$. If in
addition to the generating property the measure $\mu$ satisfies the
assumption of Proposition~\ref{prop:p-bdry-as-rel-comm-LR}, then
the map $\eta \mapsto(\iota_{U'}\otimes\eta)^{\{\infty\}}$ also defines an
isomorphism of von Neumann algebras $\End{\PP}{U}\cong
\A_{U'}'\cap\A_{U'\otimes U}$ for any object $U'$.
\end{proposition}

The work of Hayashi and Yamagami contains much more than the
construction of the algebras $\A_U$ and, in fact, allows us to
describe, under mild additional assumptions on $\mu$, not only the
morphisms but the entire Poisson boundary $\Pi\colon\CC\to\PP$ in
terms of Hilbert bimodules over~$\A$.

For objects $X$ and $Y$ consider their direct sum $X\oplus Y$, and
denote by $p_X,p_Y\in\CC(X\oplus Y)$ the corresponding projections. We
can consider $p_X$ and $p_Y$ as projections in $\A_{X\oplus Y}$, then
$p_X(\A_{X\oplus Y})p_X \cong \A_X$ and $p_Y(\A_{X\oplus Y})p_Y \cong
\A_Y$. Put
$$
\A_{X,Y}=p_Y(\A_{X\oplus Y})p_X.
$$
The $\A_Y$-$\A_X$-module $\A_{X,Y}$ can be described as an inductive
limit of completions of the spaces
$$
\tilde\CC(V^{\otimes n}\otimes X,V^{\otimes n}\otimes
Y)=\bigoplus_{s_*, s'_* \in (\supp \mu)^n} \CC(U_{s_n} \otimes \cdots
\otimes U_{s_1} \otimes X, U_{s'_n} \otimes \cdots \otimes U_{s'_1}
\otimes Y) \otimes e_{s'_*} \mathcal{R}^{\otimes n} e_{s_*}.
$$
Denote by $\HH_X$ the Hilbert space completion of $\A_{\un,X}$ with
respect to the scalar product $$(x,y)=\tau_\un(y^*x).$$ Then $\HH_X$
is a Hilbert $\A_X$-$\A$-module (it is denoted by $X_\infty$
in~\cite{MR1749868}). Viewing $\HH_X$ as a Hilbert bimodule over $\A$,
we get a unitary functor $F$ from $\CC$ into the category $\Hilb_\A$
of Hilbert bimodules over~$\A$ such that $F(U)=\HH_U$ on objects and
defined in the obvious way on morphisms in $\CC$. We want to make~$F$ into a
tensor functor. By the computation on pp.~40--41 of~\cite{MR1749868}
the map
$$
\tilde\CC(V^{\otimes n},V^{\otimes n}\otimes X)\otimes
\tilde\CC(V^{\otimes n},V^{\otimes n}\otimes Y)\to\tilde
\CC(V^{\otimes n},V^{\otimes n}\otimes X\otimes Y),
$$
$$
(S \otimes a)\otimes (T\otimes b)\mapsto (S\otimes\iota_Y)T \otimes
ab,
$$
defines an isometry
$$
F_2(X,Y)\colon \HH_X\otimes_\A\HH_Y\to\HH_{X\otimes Y}.
$$

\begin{lemma} \label{lcondmeas} Assume that for every $s\in\Irr(\CC)$
we have
$$
(\mu^n*\delta_s)(\supp\mu^n)\to1\ \ \text{as}\ \ n\to\infty.
$$
Then the maps $F_2(X,Y)$ are unitary.
\end{lemma}

\bp It suffices to prove the lemma for simple objects. Assume $X=U_s$
for some $s$. For every $n\ge1$ and $s_*\in(\supp\mu)^n$, let
$p_{s_*}^{(n)}\in\CC(U_{s_n}\otimes\dots\otimes U_{s_1}\otimes X)$ be
the projection onto the direct sum of the isotypic components
corresponding to $U_t$ for some $t\in\supp\mu^n$. Put
$$
p^{(n)}=\sum_{s_* \in (\supp \mu)^n}p^{(n)}_{s_*}\otimes e_{s_*}\in\A^{(n)}_{U_s}.
$$
Then $\tau_X(p^{(n)})=(\mu^n*\delta_s)(\supp\mu^n)$. Therefore by
assumption $p^{(n)}\to1$ in the $s^*$-topology. It follows that, to
prove the lemma, it suffices to show that if
$$
T\otimes x\in \CC(U_{s_n} \otimes \cdots \otimes U_{s_1}, U_{s'_n}
\otimes \cdots \otimes U_{s'_1} \otimes X\otimes Y) \otimes e_{s'_*}
\mathcal{R}^{\otimes n} e_{s_*}
$$
is such that $p^{(n)}(T\otimes x)=T\otimes x$, then $T\otimes x$ is in
the image of $F_2(X,Y)$. The assumption on $T$ means that the simple
objects appearing in the decomposition of $U_{s'_n} \otimes \cdots
\otimes U_{s'_1} \otimes X$ appear also in the decomposition of
$U_{t_n}\otimes\dots\otimes U_{t_1}$ for $t_*\in(\supp\mu)^n$. This
implies that $T$ can be written as a finite direct sum of morphisms of
the form $(S\otimes\iota_Y)R$, with $R\in\CC(U_{s_n} \otimes \cdots
\otimes U_{s_1},U_{t_n}\otimes\dots\otimes U_{t_1}\otimes Y)$ and
$S\in\CC(U_{t_n} \otimes \cdots \otimes
U_{t_1},U_{s'_n}\otimes\dots\otimes U_{s'_1}\otimes X)$. Since we also
have density of $e_{s'_*}\RR e_{t_*}\RR e_{s_*}$ in $e_{s'_*}\RR
e_{s_*}$, this proves the lemma.  \ep

We remark that the assumption of the lemma is obviously satisfied if
$\supp\mu=\Irr(\CC)$. It is also satisfied if $\mu$ is ergodic and
$\mu(e)>0$, since then $\|\mu^n*\delta_s-\mu^n\|_1\to0$ by
\cite{MR1644299}*{Proposition~3.3}.

\smallskip

Once the maps $F_2(X,Y)$ are unitary, it is easy to see that $(F,F_2)$
is a unitary tensor functor $\CC\to\Hilb_\A$.

\begin{proposition} \label{pHYrealization} Assume the measure $\mu$
satisfies the assumption of Lemma~\ref{lcondmeas}. Let $\BB$ be the
full C$^*$-tensor subcategory of $\Hilb_\A$ generated by the image of
$F\colon\CC\to\Hilb_\A$. Then the Poisson boundary
$\Pi\colon\CC\to\PP$ of $(\CC,\mu)$ is isomorphic to
$F\colon\CC\to\BB$.
\end{proposition}

\bp The functor $F$ extends to the full subcategory $\tilde\PP$ of
$\PP$ formed by the objects of $\CC$ using the isomorphisms
$\PP(U)\cong\A'\cap\A_U$. It follows immediately by definition that
this way we get a unitary tensor functor $E\colon\tilde\PP\to\BB$ if
we put $E_2(X,Y)=F_2(X,Y)$. We then extend this functor to a unitary
tensor functor $\PP\to\BB$, which we continue to denote by $E$. To
prove the proposition it remains to show that $E$ is fully
faithful. In other words, we have to show that the left action of
$\A_U$ on $\HH_U$ defines an isomorphism
$\A'\cap\A_U\cong\Endd_{\A\mhyph\A}(\HH_U)$.

Let us check the stronger statement that the left action defines an
isomorphism $\A_U\cong\Endd_{\,\mhyph\A}(\HH_U)$. Recalling how $\HH_U$ was
constructed using complementary projections in $\A_{\un\oplus U}$, it
becomes clear that the map $\A_U\to\Endd_{\,\mhyph\A}(\HH_U)$ is always
surjective, and it is injective if and only if the projection
$p_\un\in\A_{\un\oplus U}$ has central support $1$. Using the
Frobenius reciprocity isomorphism
$$
\CC(V^{\otimes n}\otimes U)\cong \CC(V^{\otimes n},V^{\otimes
n}\otimes U\otimes\bar U),
$$
it is easy to check that $\HH_{U\otimes\bar U}\cong L^2(\A_U,\tau_U)$
as a Hilbert $\A_U$-$\A$-module. Hence the representation of~$\A_U$ on
$\HH_{U\otimes\bar U}$ is faithful. Since $\HH_{U\otimes\bar U}\cong
\HH_U\otimes_\A\HH_{\bar U}$, it follows that the representation of
$\A_U$ on~$\HH_U$ is faithful as well.  \ep

A similar result could also be proved using the algebras $\vNN_U$ from
Section~\ref{sec:longo-roberts-appr} instead of $\A_U$. The situation
would be marginally more complicated, since in dealing with the Connes
fusion tensor product~$\otimes_\vNN$ we would have to take into
account the modular group of $\omega_U$. We are not going to pursue
this topic here, although it could provide a somewhat alternative
route to Proposition~\ref{pminindex} below.

\bigskip

\section{A universal property of the Poisson
boundary}\label{suniversal}

Let $\CC$ be a weakly amenable strict C$^*$-tensor category. Fix an
ergodic probability measure $\mu$ on~$\Irr(\CC)$. Recall that such a
measure exists by Proposition~\ref{pweakamen}. Let
$\Pi\colon\CC\to\PP$ be the Poisson boundary of~$(\CC,\mu)$.

For an object $U$ in $\CC$ define
$$
\dmin^\CC(U)=\inf d^\A(F(U)),
$$
where the infimum is taken over all unitary tensor functors
$F\colon\CC\to\A$ from $\CC$ into rigid C$^*$-tensor categories $\A$. We will show in the next section that $\dmin^\CC$
is the amenable dimension function on $\CC$. The goal of the present
section is to prove the following.

\begin{theorem} \label{tuniversal1} The Poisson boundary
$\Pi\colon\CC\to\PP$ is a universal unitary tensor functor such that
$\dmin^\CC=d^\PP\Pi$.
\end{theorem}

In other words, $\dmin^\CC=d^\PP\Pi$ and for any unitary tensor
functor $F\colon\CC\to\A$ such that $\dmin^\CC=d^\A F$ there exists a
unique, up to a natural unitary monoidal isomorphism, unitary tensor
functor $\Lambda\colon\PP\to\A$ such that $\Lambda\Pi\cong F$.

\smallskip

For a rigid C$^*$-tensor category $\A$, consider a unitary tensor functor $F\colon\CC\to\A$, with no
restriction on the dimension function. As we discussed in
Section~\ref{sstensorfunctors}, we may assume that $\A$ is strict,
$\CC$ is a C$^*$-tensor subcategory of $\A$ and $F$ is the embedding
functor. Motivated by Izumi's Poisson integral~\cite{MR1916370} we
will define linear maps
$$
\Theta_{U,V}\colon\A(U,V)\to\PP(U,V).
$$
We will write $\Theta_U$ for $\Theta_{U,U}$ and often omit the
subscripts altogether, if there is no danger of confusion. The proof
of the theorem will be based on analysis of the multiplicative domain
of $\Theta$.

\smallskip

For every object $U$ in $\CC$ fix a standard solution $(R_U,\bar R_U)$
of the conjugate equations in $\CC$. Define a faithful state $\psi_U$
on $\End{\A}{U}$ by
$$
\psi_U(T)=d^\CC(X)^{-1} \bar R_U^*(T\otimes\iota)\bar R_U.
$$
Since any other standard solution has the form
$((u\otimes\iota)R_U,(\iota\otimes u)\bar R_U)$ for a unitary $u$,
this definition is independent of any choices. More generally, we can
define in a similar way ``slice maps"
$$
\iota\otimes\psi_V\colon \End{\A}{U\otimes V}\to \End{\A}{U}.
$$
Then, since $((\iota\otimes R_U\otimes \iota)R_V,(\iota\otimes \bar
R_V\otimes \iota)\bar R_U)$ is a standard solution for $U\otimes V$,
we get
\begin{equation} \label{eslice1} \psi_{U\otimes V}=\psi_U(\iota\otimes
\psi_V).
\end{equation}

By definition the state $\psi_U$ extends the trace $\tr_U$ on
$\End{\CC}{U}$.

\begin{lemma} The subalgebra $\End{\CC}{U}\subset \End{\A}{U}$ is
contained in the centralizer of the state $\psi_U$.
\end{lemma}

\bp If $u$ is a unitary in $\CC(U)$, then the state $\psi_U(u\cdot
u^*)$ is defined similarly to $\psi_U$, but using the solution
$((\iota\otimes u^*)R_U,(u^*\otimes\iota)\bar R_U)$ of the conjugate
equations for $U$. Since $\psi_U$ is independent of the choice of
standard solutions, it follows that $\psi_U(u\cdot u^*)=\psi_U$. But
this exactly means that $\CC(U)$ is contained in the centralizer of
$\psi_U$.  \ep

It follows that there exists a unique $\psi_U$-preserving conditional
expectation $E_U\colon \End{\A}{U}\to \End{\CC}{U}$.
For objects $U$ and $V$ we can consider $\A(U,V)$ as a subspace of
$\End{\A}{U\oplus V}$. Then $E_{U\oplus V}$ defines a linear map
$$
E_{U,V}\colon\A(U,V)\to\CC(U,V).
$$
Again, we omit the subscripts when convenient.

\begin{lemma} \label{lcondexp1} The maps $E_{U,V}$ satisfy the
following properties:
\begin{enumerate}
\item $E_{U,V}(T)^*=E_{V,U}(T^*)$;
\item if $T\in\A(U,V)$ and $S\in\CC(V,W)$, then
$E_{U,W}(ST)=SE_{U,V}(T)$;
\item for any object $X$ in $\CC$ we have $E_{U\otimes
X,V\otimes X}(T\otimes\iota_X)=E_{U,V}(T)\otimes\iota_X$.
\end{enumerate}
\end{lemma}

\bp Properties (i) and (ii) follows immediately from the corresponding
properties of conditional expectations. To prove (iii), it suffices to
consider the case $U=V$. Take $S\in\End{\CC}{U\otimes X}$. Then we
have to check that
$$
\psi_{U\otimes X}(S(T\otimes\iota))=\psi_{U\otimes X}(S(E(T)\otimes
\iota)).
$$
This follows from \eqref{eslice1} and the fact that by definition we
have $(\iota\otimes\psi_X)(S)\in\End{\CC}{U}$.  \ep

Now, given a morphism $T\in \A(U,V)$, define a bounded natural
transformation $\Theta_{U,V}(T)\colon\iota\otimes U\to\iota\otimes V$
of functors on $\CC$ by
$$
\Theta_{U,V}(T)_X=E_{X\otimes U,X\otimes V}(\iota_X\otimes T).
$$

\begin{lemma}\label{luniharmonic} The natural transformation
$\Theta_{U,V}(T)$ is $P_X$-harmonic for any object $X$ in $\CC$.
\end{lemma}

\bp It suffices to consider the case $U=V$. We claim that
$$
(\tr_X\otimes \iota)E(\iota_X\otimes T)=E(T).
$$
Indeed, for any $S\in \End{\CC}{U}$ we have
$$
\tr_U\big(S(\tr_X\otimes \iota)E(\iota_X\otimes T)\big) =\tr_{X\otimes
U}(E(\iota_X\otimes ST))\\ =\psi_{X\otimes U}(\iota\otimes ST)
=\psi_U(ST)=\tr_U(SE(T)),
$$
where in the third equality we used \eqref{eslice1}. This proves the
claim.

We now compute:
$$
P_X(\Theta(T))_Y=(\tr_X\otimes\iota)(\Theta(T)_{X\otimes
Y})=(\tr_X\otimes\iota_{Y}\otimes\iota_U)(E(\iota_{X}\otimes\iota_Y\otimes
T))\\ =E(\iota_Y\otimes T)=\Theta(T)_Y,
$$
so $\Theta(T)$ is $P_X$-harmonic.  \ep

It follows that $\Theta_{U,V}$ is a well-defined linear map
$\A(U,V)\to \PP(U,V)$.

\begin{lemma} \label{ltheta} The maps $\Theta_{U,V}$ satisfy the
following properties:
\begin{itemize}
\item[{\rm (i)}] $\Theta_{U,V}(T)^*=\Theta_{V,U}(T^*)$;
\item[{\rm (ii)}] if $T\in\A(U,V)$ and $S\in\CC(V,W)$, then
$\Theta_{U,W}(ST)=S\Theta_{U,V}(T)$;
\item[{\rm (iii)}] for any object $X$ in $\CC$ we have
$\Theta_{U\otimes X,V\otimes
X}(T\otimes\iota_X)=\Theta_{U,V}(T)\otimes\iota_X$ and
$\Theta_{X\otimes U,X\otimes V}(\iota_X\otimes T)=\iota_X\otimes
\Theta_{U,V}(T)$;
\item[{\rm (iv)}] the maps $\Theta_U\colon\End{\A}{U}\to\End{\PP}{U}$
are unital, completely positive, and faithful.
\end{itemize}
\end{lemma}

\bp All these properties are immediate consequences of the definitions
and the properties of the maps $E_{U,V}$ given in
Lemma~\ref{lcondexp1}. We would like only to point out that the
property $\Theta(\iota\otimes T)=\iota\otimes\Theta(T)$ follows from
the definition of the tensor product in $\PP$, the corresponding
property for the maps $E$ is neither satisfied nor needed.  \ep

Our goal now is to understand the multiplicative domains of the maps
$\Theta_U\colon\End{\A}{U}\to\End{\PP}{U}$. We will first show that
these domains cannot be very large. More precisely, assume we have an
intermediate C$^*$-tensor category $\CC\subset\BB\subset\A$ such that
$d^\A=d^\BB$ on $\CC$. For an object $U$ in $\CC$ denote by
$E^\BB_U\colon\End{\A}{U}\to\BB(U)$ the conditional expectation
preserving the categorical trace on $\A$. Then we have the following
result inspired by \cite{MR2335776}*{Lemma~4.5}.

\begin{lemma} \label{lthetafactor} We have $\Theta_U=\Theta_U
E^\BB_U$.
\end{lemma}

\bp We will first show that a similar property holds for the maps $E$,
so $E_U=E_UE^\BB_U$.

Consider the normalized categorical trace $\tr^\A_U$ on
$\End{\A}{U}$. We have $\psi_U=\tr^\A_U(\cdot\, Q)$ for some
$Q\in\End{\A}{U}$. The identity $E_U=E_UE^\BB_U$ holds if and only if
the conditional expectation $E^\BB_U$ is $\psi_U$-preserving, or
equivalently, $Q\in\BB(U)$.

By assumption we have $d^\A(U)=d^\BB(U)$ for every object $U$ in
$\CC$. It follows that a standard solution $(R^\BB_U,\bar R^\BB_U)$ of
the conjugate equations for $U$ and $\bar U$ in $\BB$ remains standard
in $\A$. We have $\bar R_U=(T\otimes\iota)\bar R^\BB_U$ for a uniquely
defined $T\in\BB(U)$. Then $Q=\frac{d^\BB(U)}{d^\CC(U)}TT^*\in\BB(U)$.

\smallskip

We also need the simple property $E^\BB_{X\otimes U}(\iota_X\otimes
T)=\iota_X\otimes E^\BB_U(T)$. This is proved similarly to
Lemma~\ref{lcondexp1}(iii), using that $\tr^\A_{X\otimes
U}=\tr^\A_U(\tr^\A_X\otimes\iota)$ and the fact that $\tr^\A$ is
defined using standard solutions in $\BB$, so that
$(\tr^\A_X\otimes\iota)(\BB(X\otimes U))\subset\BB(U)$.

\smallskip

The equality $\Theta_U E^\BB_U=\Theta_U$ is now immediate:
$$
\Theta E^\BB(T)_X=E(\iota_X\otimes E^\BB(T))=E E^\BB(\iota_X\otimes T)
=E(\iota_X\otimes T)=\Theta(T)_X.
$$
This proves the assertion.
\ep

Since the completely positive map $\Theta_U$ is faithful, the
multiplicative domain of $\Theta_U=\Theta_UE^\BB_U$ is contained in
that of $E^\BB_U$, which is exactly $\BB(U)$. Therefore to find this
domain we have to consider the smallest possible subcategory that
contains $\CC$ and still defines the same dimension function as~$\A$.

\begin{lemma} \label{luniversalau} For every object $U$ in $\CC$ there
exists a unique positive invertible element $a_U\in{\A}(U)$ such that
$$
(\iota\otimes a_U^{1/2})R_U\ \ \text{and}\ \
(a^{-1/2}_U\otimes\iota)\bar R_U
$$
form a standard solution of the conjugate equations for $U$ in $\A$.
\end{lemma}

\bp We can find an invertible element $T\in\End{\A}{U}$ such that
$(\iota\otimes T)R_U$ and $((T^*)^{-1}\otimes\iota)\bar R_U$ form a
standard solution in $\A$. Then we can take $a_U=T^*T$, since
$Ta_U^{-1/2}$ is unitary and hence the morphisms $(\iota\otimes
a_U^{1/2})R_U$ and $(a^{-1/2}_U\otimes\iota)\bar R_U$ still form a
standard solution.

Any other standard solution for $U$ and $\bar U$ has the form
$(\iota\otimes va_U^{1/2})R_U$, $(va^{-1/2}_U\otimes\iota)\bar R_U$
for a unitary $v\in\End{\A}{U}$. By uniqueness of the polar
decomposition the element $va_U^{1/2}$ is positive only if $v=1$.  \ep

Note that if we replace $(R_U,\bar R_U)$ by $((\iota\otimes
u)R_U,(u\otimes\iota)\bar R_U)$ for a unitary $u\in\End{\CC}{U}$, then
$a_U$ gets replaced by $ua_Uu^*$.

\begin{lemma} \label{luniversaldim} For every object $U$ in $\CC$ we
have $d^\PP(U)\le d^\A(U)$, and if the equality holds, then we have
$\Theta_U(a_U)^{-1}=\Theta_U(a_U^{-1})$.
\end{lemma}

\bp As usual, we omit the subscript $U$ in the computations. Consider
the solution
\begin{align*}
r&=(\iota\otimes \Theta(a)^{1/2})R,&
\bar{r}&=(\Theta(a)^{-1/2}\otimes\iota)\bar R
\end{align*}
of the conjugate equations for $U$ in $\PP$. Then from the equality
$$
r^*r=R^*(\iota\otimes \Theta(a))R=\Theta(R^*(\iota\otimes a)R),
$$
we have $\|r\|=d^{\A}(U)^{1/2}$. On the other hand, we also have
$$
\bar r^*\bar r=\bar R^*(\Theta(a)^{-1}\otimes \iota)\bar R.
$$
By Jensen's inequality for positive maps and the fact that the
function $t\mapsto t^{-1}$ on $(0,+\infty)$ is operator convex (see,
e.g.,~\cite{MR2251116}*{B.2}), we have
$\Theta(a)^{-1}\le\Theta(a^{-1})$. Hence we have the estimate
$$
\bar r^*\bar r\le\bar R^*(\Theta(a^{-1})\otimes \iota)\bar
R=\Theta(\bar R^*(a^{-1}\otimes \iota)\bar R),
$$
and we conclude that $\|\bar r\|\le d^{\A}(U)^{1/2}$. Hence
$d^\PP(U)\le d^\A(U)$, and if the equality holds, then we have $\|\bar
r\|= d^{\A}(U)^{1/2}$ and
$$
\bar R^*(\Theta(a)^{-1}\otimes \iota)\bar R=\bar
R^*(\Theta(a^{-1})\otimes \iota)\bar R.
$$
Since $T\mapsto \bar R^*(T\otimes \iota)\bar R$ is a faithful positive
linear functional on $\End{\PP}{U}$, this is equivalent to
$\Theta(a)^{-1}=\Theta(a^{-1})$.  \ep

If we have $d^\PP(U)=d^\A(U)$, we can then apply the following general result,
which is surely well-known.

\begin{lemma} \label{lmultdomain} Assume $\theta\colon A\to B$ is a
unital completely positive map of C$^*$-algebras and $a\in A$ is a
positive invertible element such that
$\theta(a)^{-1}=\theta(a^{-1})$. Then $a$ lies in the multiplicative
domain of $\theta$.
\end{lemma}

\bp It suffices to show that $a^{1/2}$ lies in the multiplicative
domain. This, in turn, is equivalent to the equality
$\theta(a)^{1/2}=\theta(a^{1/2})$.

Using Jensen's inequality and operator convexity of the functions
$t\mapsto -t^{1/2}$ and $t\mapsto t^{-1}$, we have
$$
\theta(a)^{1/2}\ge\theta(a^{1/2}),\ \
\theta(a^{-1})^{1/2}\ge\theta(a^{-1/2}),\ \ \text{and}\ \
\theta(a^{-1/2})^{-1}\le\theta(a^{1/2}).
$$
The second and the third inequalities imply
$$
\theta(a^{-1})^{-1/2}\le \theta(a^{1/2}).
$$
Since $\theta(a^{-1})=\theta(a)^{-1}$, this gives $\theta(a)^{1/2}\le
\theta(a^{1/2})$. Hence $\theta(a)^{1/2}=\theta(a^{1/2})$.  \ep

To finish the preparation for the proof of Theorem~\ref{tuniversal1}
we consider the maps $\Theta$ for $\A=\PP$.

\begin{lemma} \label{lthetaid} The maps
$\Theta_{U,V}\colon\PP(U,V)\mapsto\PP(U,V)$ defined by the functor
$\Pi\colon\CC\to\PP$ are the identity maps.
\end{lemma}

\bp It suffices to consider $U=V$. Take $\eta\in\End{\PP}{U}$. Let us
show first that $\Theta(\eta)_\un=\eta_\un$, that is,
$E(\eta)=\eta_\un$. In other words, we have to check that for any
$S\in\End{\CC}{U}$ we have
$$
\psi_U(S\eta)=\tr_U(S\eta_\un).
$$
This follows immediately by definition, since
$$
(\bar R_U^*(S\eta\otimes\iota)\bar R_U)_\un=\bar
R_U^*(S\eta_\un\otimes\iota)\bar R_U.
$$

Now, for any object $X$ in $\CC$, we have
$$
\Theta(\eta)_X=E(\iota_X\otimes\eta)=\Theta(\iota_X\otimes\eta)_\un=(\iota_X\otimes\eta)_\un=\eta_X.
$$
Therefore we have $\Theta(\eta)=\eta$.  \ep

\bp[Proof of Theorem~\ref{tuniversal1}] The equality
$\dmin^\CC(U)=d^\PP(U)$ for objects $U$ in $\CC$ follows from
Lemma~\ref{luniversaldim}.

\smallskip

Let $F\colon\CC\to\A$ be a unitary tensor functor such that
$\dmin^\CC=d^\A F$. As above, we assume that $F$ is simply an
embedding functor. Consider the minimal subcategory
$\tilde\BB\subset\A$ containing $\CC\subset\A$ and the
morphisms~$\iota_V\otimes a_U\otimes\iota_W$ for all objects $V$, $U$
and $W$ in $\CC$, where $a_U\in\End{\A}{U}$ are the morphisms defined
in Lemma~\ref{luniversalau}. This is a C$^*$-tensor subcategory, in
general without subobjects. Complete~$\tilde\BB$ with respect to
subobjects to get a C$^*$-tensor category~$\BB$. By adding more
objects to $\A$ we may assume without loss of generality that
$\BB\subset\A$. Lemmas~\ref{ltheta},~\ref{luniversaldim}
and~\ref{lmultdomain} imply that the maps $\Theta_{U,V}$ define a
strict unitary tensor functor $\tilde\BB\to\PP$. Thus $\tilde\BB$ is
unitary monoidally equivalent to a C$^*$-tensor subcategory
$\tilde\PP\subset \PP$, possibly without subobjects. Completing
$\tilde\PP$ with respect to subobjects we get a C$^*$-tensor
subcategory $\PP'\subset\PP$, which is unitarily monoidally equivalent
to $\BB$.

We claim that the embedding functor $\PP'\to\PP$ is a unitary monoidal
equivalence. Indeed, by construction we have
$d^{\PP'}(U)=\dmin^\CC(U)$ for every object $U$ in $\CC$. By
Lemmas~\ref{lthetafactor} and~\ref{lthetaid} it follows then that the
identity maps $\End{\PP}{U}\to\End{\PP}{U}$ factor through the
conditional expectations
$E^{\PP'}_U\colon\End{\PP}{U}\to\End{\PP'}{U}$. Hence
$\End{\PP}{U}=\End{\PP'}{U}$. Since the objects of $\CC$ generate
$\PP$, this implies that the embedding functor $\PP'\to\PP$ is a
unitary monoidal equivalence.

We have therefore shown that $\PP$ and $\BB$ are unitarily monoidally
equivalent, and furthermore, by properties of the maps $\Theta$ such
an equivalence $\Lambda\colon\PP\to\BB$ can be chosen to be the
identity tensor functor on $\CC$. Considered as a functor $\PP\to\A$,
the unitary tensor functor $\Lambda$ gives the required factorization
of $F\colon\CC\to\A$.

\smallskip

It remains to prove uniqueness. Denote by $\rho_U\in\End{\PP}{U}$ the
elements $a_U$ constructed in Lemma~\ref{luniversalau} for the
category $\PP$. By the uniqueness part of that lemma, it is clear that
any unitary tensor functor $\Lambda\colon\PP\to\A$ extending the
embedding functor $\CC\to\A$ must map $\rho_U\in\End{\PP}{U}$ into
$a_U\in\End{\A}{U}$. But this completely determines $\Lambda$ up to a
unitary monoidal equivalence, since by the above considerations the
category $\PP$ is obtained from $\CC$ by adding the morphisms $\rho_U$
and then completing the new category with respect to subobjects.  \ep

We finish the section with a couple of corollaries.

\smallskip

The universality of the Poisson boundary implies that up to an
isomorphism the boundary does not depend on the choice of an ergodic
measure. But the proof shows that a stronger result is true.

\begin{corollary} \label{cindepend} Let $\CC$ be a weakly amenable
C$^*$-tensor category and $\mu$ be an ergodic probability measure on
$\Irr(\CC)$. Then any bounded $P_\mu$-harmonic natural transformation
is $P_s$-harmonic for every $s\in\Irr(\CC)$, so the Poisson boundary
$\Pi\colon\CC\to\PP$ of $(\CC,\mu)$ does not depend on the choice of
an ergodic measure.
\end{corollary}

\bp By Lemma~\ref{lthetaid} the maps
$\Theta_{U,V}\colon\PP(U,V)\to\PP(U,V)$ are the identity maps, while
by Lem\-ma~\ref{luniharmonic} their images consist of elements that
are $P_s$-harmonic for all $s$.  \ep

When $\CC$ is amenable, then $\dmin^\CC=d^\CC$ and we get the
following.

\begin{corollary} \label{camenb} If $\CC$ is an amenable C$^*$-tensor
category, then its Poisson boundary with respect to any ergodic
probability measure on $\Irr(\CC)$ is trivial. In other words, any
bounded natural transformation $\iota\otimes U\to\iota\otimes V$ which
is $P_s$-harmonic for all $s\in\Irr(\CC)$, is defined by a morphism
in~$\CC(U,V)$.
\end{corollary}

\bp The identity functor $\CC\to\CC$ is already universal, so it is
isomorphic to the Poisson boundary.  \ep

We remark that if we were interested only in proving this corollary, a
majority of the above arguments, being applied to the functor
$\CC\to\PP$, would become either trivial or unnecessary. Namely, in
this case a standard solution of the conjugate equations in~$\CC$
remains standard in $\PP$, so we have $E_U=E^\CC_U$, and the key parts
of the proof are contained in Lemmas~\ref{lthetafactor}
and~\ref{lthetaid}. The first lemma shows that given
$\eta\in\End{\PP}{U}$ we have $E(\iota_X\otimes\eta)=E(\iota_X\otimes
E(\eta))$, while the second shows that
$E(\iota_X\otimes\eta)=\eta_X$. Since $E(\iota_X\otimes
E(\eta))=\iota_X\otimes E(\eta)$, we therefore see that $\eta$
coincides with $E(\eta)\in\End{\CC}{U}$.

\smallskip

Corollary~\ref{camenb} is more or less known: in view of
Proposition~\ref{prop:PP-equals-HY}, for measures considered
in~\cite{MR1749868} it is equivalent
to~\cite{MR1749868}*{Theorem~7.6}. For an even more restrictive class
of measures the result also follows
from~\cite{MR1444286}*{Theorem~5.16}.

\bigskip

\section{Amenability of the minimal dimension function}
\label{sec:amen-minim-dimens}

As in the previous section, let $\CC$ be a weakly amenable strict
C$^*$-tensor category. We defined the dimension function $\dmin^\CC$
on $\CC$ as the infimum of dimension functions under all possible
embeddings of $\CC$ into rigid C$^*$-tensor categories, and showed that it
is indeed a dimension function realized by the Poisson boundary of
$\CC$ with respect to any ergodic measure. The goal of this section is
to prove the following.

\begin{theorem} \label{tminimal} The dimension function $\dmin^\CC$ is
  amenable, that is, $\dmin^\CC(U)=\|\Gamma_U\|$ holds for every
  object $U$ in $\CC$.
\end{theorem}

We remark that already the fact that the fusion algebra of a weakly
amenable C$^*$-tensor category admits an amenable dimension function
is nontrivial. We do not know whether this is true for weakly amenable
dimension functions on fusion algebras that are not of categorical
origin. If the fusion algebra is commutative, this is true by a result
of Yamagami~\cite{MR1721584}.

\smallskip

Let $\mu$ be an ergodic probability measure $\mu$ on $\Irr(\CC)$ and
consider the corresponding Poisson boundary $\Pi\colon\CC\to\PP$. By
Theorem~\ref{tuniversal1} we already know that
$\dmin^\CC=d^\PP\Pi$. Therefore Theorem~\ref{tminimal} is equivalent
to saying that $d^\PP\Pi$ is the amenable dimension function on
$\CC$.

\smallskip

We will use the realization of harmonic transformations as elements of
$\vNN'\cap\vNN_U$ given in Section~\ref{sec:longo-roberts-appr}. It
will also be important to work with factors. Therefore we assume that
in addition to being ergodic the measure $\mu$ is generating and
satisfies the assumption of
Proposition~\ref{prop:p-bdry-as-rel-comm-LR} (recall that for the
latter it suffices to require $\mu(e)>0$). Recall once again that by
Proposition~\ref{pweakamen} such a measure exists. We also remind that
by Corollary~\ref{cindepend} the Poisson boundary does not depend on
the ergodic measure, but its realization in terms of relative
commutants does. We then have the following expected (in view of
Proposition~\ref{pHYrealization} and the discussion following it), but
crucial, result.

\begin{proposition} \label{pminindex} For every object $U$ in $\CC$,
we have $d^\PP(U)=[\vNN_U\colon\vNN]_0^{1/2}$, where
$[\vNN_U\colon\vNN]_0$ is the minimal index of the subfactor
$\vNN\subset\vNN_U$.
\end{proposition}

Before we turn to the proof, recall the construction of
$\vNN_U$. Consider $V=\bigoplus_{s\in\supp\mu}U_s$. We will work with $V$
as with a well-defined object. If $\supp\mu$ is infinite, to be
rigorous, in what follows we have to replace $V$ by finite sums of
objects $U_s$, $s\in\supp\mu$, and then pass to the limit, but we will
omit this repetitive simple argument. With this understanding,
$\vNN_U$ is the inductive limit of the algebras
$\vNN^{(n)}_U=\CC(V^{\otimes n}\otimes U)$ equipped with the faithful
states $\omega^{(n)}_U$.

Given another object $U'$, the partial trace $\iota \otimes \tr_{U}$
defines, for each $n$, a conditional expectation $\vNN_{U' \otimes
U}^{(n)} \to \vNN_{U'}^{(n)}$ which preserves the state
$\omega^{(n)}_{U'\otimes U}$.  The conditional expectation $\vNN_{U'
\otimes U} \to \vNN_{U'}$ which we get in the limit, is denoted by
$E_{U',U}$, or simply by $E_U$ if there is no danger of confusion. Fix
a standard solution $(R_U,\bar R_U)$ of the conjugate equations for
$U$ in $\CC$.

\begin{lemma}\label{l:basic-ext} The index of the conditional
expectation $E_U\colon\vNN_U\to\vNN$ equals $d^\CC(U)^2$, the
corresponding basic extension is $\vNN_U\subset\vNN_{U \otimes
\bar{U}}$, with the Jones projection $e_U= d^\CC(U)^{-1} \bar{R}_U
\bar{R}_U^* \in\vNN^{(0)}_{U\otimes\bar U}\subset \vNN_{U \otimes
\bar{U}}$ and the conditional expectation
$E_{\bar{U}}\colon\vNN_{U\otimes\bar U}\to\vNN_U$.
\end{lemma}

\bp By the abstract characterization of the basic extension
\cite{MR1245827}*{Theorem~8} it suffices to check the following three
properties: $E_{\bar U}(e_U)=d^\CC(U)^{-2}1$, $E_{\bar
U}(xe_U)e_U=d^\CC(U)^{-2}xe_U$ for all $x\in\vNN_{U\otimes\bar U}$,
and $e_Uxe_U=E_U(x)e_U$ for all $x\in\vNN_U$. The first and the third
properties are immediate by definition. To prove the second, it is
enough to show that for all $x\in\CC(X\otimes U\otimes\bar U)$ we have
$$
d^\CC(U)\big((\iota\otimes\tr_{\bar U})(x(\iota_X\otimes \bar R_U\bar
R^*_U))\otimes\iota_{\bar U}\big) (\iota_X\otimes\bar R_U\bar
R_U^*)=x(\iota_X\otimes\bar R_U\bar R_U^*).
$$
The left hand side equals
\begin{gather*}
  (\iota_{X}\otimes\iota_{U}\otimes R^*_U\otimes\iota_{\bar U})
  (x\otimes\iota_{U}\otimes\iota_{\bar U}) (\iota_X\otimes \bar
  R_U\bar R^*_U\otimes\iota_{U}\otimes\iota_{\bar U})
  (\iota_{X}\otimes\iota_{U}\otimes R_U\otimes\iota_{\bar U})
  (\iota_X\otimes\bar R_U\bar R_U^*)\\
\begin{split}
 &= (\iota_{X}\otimes\iota_{U}\otimes R^*_U\otimes\iota_{\bar U})
 (x\otimes\iota_{U}\otimes\iota_{\bar U}) (\iota_X\otimes \bar
 R_U\otimes\iota_{U}\otimes\iota_{\bar U}) (\iota_X\otimes\bar R_U\bar R_U^*)\\
 &= (\iota_{X}\otimes\iota_{U}\otimes R^*_U\otimes\iota_{\bar U})
 (x\otimes\iota_{U}\otimes\iota_{\bar U}) (\iota_X\otimes \bar
 R_U\otimes\bar R_U) (\iota_X\otimes\bar R_U^*)\\
 &=x(\iota_X\otimes\bar R_U\bar R_U^*),
\end{split}
\end{gather*}
which proves the lemma.
\end{proof}

This lemma implies in particular that there exists a unique
representation $$\pi\colon\vNN_{U\otimes\bar U}\to
B(L^2(\vNN_U,\omega_U))$$ that extends the representation of $\vNN_U$
and is such that $\pi(e_U)$ is the projection onto the closure of
$\Lambda_{\omega_U}(\vNN)\subset L^2(\vNN_U,\omega_U)$.

\begin{lemma} \label{lpi} The representation
  $\pi\colon\vNN_{U\otimes\bar U}\to B(L^2(\vNN_U,\omega_U))$ is given
  by
$$
\pi(x)\Lambda_{\omega_U}(y) = \Lambda_{\omega_U}((\iota\otimes
R_U^*)(x\otimes\iota_U)(y\otimes\iota_{\bar U}\otimes
\iota_U)(\iota\otimes \bar R_U\otimes\iota_U))
$$
for $x\in\bigcup_n\vNN^{(n)}_{U\otimes\bar U}$ and
$y\in\bigcup_n\vNN^{(n)}_U$.
\end{lemma}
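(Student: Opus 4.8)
The plan is to verify that the explicit formula in the statement describes the representation $\pi$ singled out right after Lemma~\ref{l:basic-ext}, namely the unique normal representation of $\vNN_{U\otimes\bar U}$ on $L^2(\vNN_U,\omega_U)$ that extends the left action of $\vNN_U$ and sends $e_U$ to the projection $P$ onto $\overline{\Lambda_{\omega_U}(\vNN)}$. For $x\in\CC(V^{\otimes n}\otimes U\otimes\bar U)$ and $y\in\CC(V^{\otimes n}\otimes U)$, brought to a common level $n$ by the inductive embeddings, write
$$
x\triangleright y=(\iota\otimes R_U^*)(x\otimes\iota_U)(y\otimes\iota_{\bar U}\otimes\iota_U)(\iota\otimes\bar R_U\otimes\iota_U)\in\CC(V^{\otimes n}\otimes U),
$$
so the asserted formula reads $\pi(x)\Lambda_{\omega_U}(y)=\Lambda_{\omega_U}(x\triangleright y)$. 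First I would check that $\triangleright$ is compatible with the inductive systems: since $R_U$ and $\bar R_U$ act only on the $U$- and $\bar U$-legs and leave the $V^{\otimes n}$-legs untouched, replacing $x$ by $\iota_V\otimes x$ and $y$ by $\iota_V\otimes y$ turns $x\triangleright y$ into $\iota_V\otimes(x\triangleright y)$, so the operator $\pi(x)$ on the dense subspace $\Lambda_{\omega_U}(\cup_m\vNN^{(m)}_U)$ is well defined independently of~$n$.

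The heart of the argument is to show that $x\mapsto\pi(x)$ is a $*$-homomorphism on $\cup_n\vNN^{(n)}_{U\otimes\bar U}$. The identity $\pi(x^*)=\pi(x)^*$ is a matter of taking the adjoint of the morphism defining $x\triangleright y$ and using $(R_U^*)^*=R_U$ and $(\bar R_U)^*=\bar R_U^*$. For multiplicativity I would expand $\pi(x_1)\pi(x_2)\Lambda_{\omega_U}(y)=\Lambda_{\omega_U}(x_1\triangleright(x_2\triangleright y))$ and compare it with $\Lambda_{\omega_U}((x_1x_2)\triangleright y)$. Substituting the definition of $x_2\triangleright y$ into that of $x_1\triangleright(\,\cdot\,)$ produces, between the two copies of the formula, a zigzag formed by the inner $\bar R_U$ and $R_U^*$ together with the outer $\bar R_U$; straightening it with the conjugate equation $(\iota_U\otimes R_U^*)(\bar R_U\otimes\iota_U)=\iota_U$ collapses the composite into the single morphism $(x_1x_2)\triangleright y$. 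This leg bookkeeping, together with the repeated use of the conjugate equations, is the main obstacle; I expect it to be routine in the graphical calculus but to require care. Once multiplicativity holds, the standard positivity argument shows each $\pi(x)$ is bounded with $\|\pi(x)\|\le\|x\|$, and $\pi$ extends to a normal representation of $\vNN_{U\otimes\bar U}$.

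It then remains to identify $\pi$ on the generators $\vNN_U$ and $e_U$. For $x=a\otimes\iota_{\bar U}$ with $a\in\vNN_U$, the factors $a$ and $y$ both act on the $V^{\otimes n}\otimes U$-legs, so their product is $ay$; pulling it to the left and applying the same conjugate equation to the remaining zigzag of $\bar R_U$ and $R_U^*$ gives $x\triangleright y=ay$, whence $\pi|_{\vNN_U}$ is the left action. For $x=e_U=d^\CC(U)^{-1}\bar R_U\bar R_U^*$, I would factor $\bar R_U\bar R_U^*=\bar R_U\circ\bar R_U^*$; the lower part $(\iota\otimes\bar R_U^*)(y\otimes\iota_{\bar U})(\iota\otimes\bar R_U)$ is exactly $d^\CC(U)$ times the partial trace $(\iota\otimes\tr_U)(y)=E_U(y)\in\vNN$, while the upper part $(\iota\otimes R_U^*)(\iota\otimes\bar R_U\otimes\iota_U)$ straightens to $\iota_U$, yielding $e_U\triangleright y=E_U(y)\otimes\iota_U$.

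Since $E_U$ is the $\omega_U$-preserving conditional expectation of Lemma~\ref{l:basic-ext}, its Jones projection $P$ satisfies $P\Lambda_{\omega_U}(y)=\Lambda_{\omega_U}(E_U(y)\otimes\iota_U)$; comparing with the previous computation gives $\pi(e_U)=P$. Thus $\pi$ is a normal representation of $\vNN_{U\otimes\bar U}$ extending the left action of $\vNN_U$ with $\pi(e_U)=P$, so by the uniqueness recorded after Lemma~\ref{l:basic-ext} it coincides with the representation in the statement, which is what had to be proved.
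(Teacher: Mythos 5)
Your overall strategy is the same as the paper's: define the candidate operators by the displayed formula, check they give a normal $*$-representation of $\vNN_{U\otimes\bar U}$ extending the left action of $\vNN_U$ and sending $e_U$ to the Jones projection, and invoke the uniqueness recorded after Lemma~\ref{l:basic-ext}. Your computations of $\pi(a\otimes\iota_{\bar U})$ and of $e_U\triangleright y=E_U(y)\otimes\iota_U$ are correct and coincide with the paper's. Where the paper differs is that it does not verify multiplicativity by straightening zigzags: it observes that $y\mapsto x\triangleright y$ is exactly left multiplication by $x$ transported through the Frobenius reciprocity isomorphism $\CC(V^{\otimes n}\otimes U)\cong\CC(V^{\otimes n},V^{\otimes n}\otimes U\otimes\bar U)$, $T\mapsto(T\otimes\iota_{\bar U})(\iota\otimes\bar R_U)$, which is unitary up to a scalar for the relevant inner products. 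This makes the $*$-homomorphism property automatic and, more importantly, supplies the correct adjoint: note that $\pi(x^*)=\pi(x)^*$ is an identity of Hilbert-space adjoints with respect to $\omega_U$, i.e.\ $\omega_U(z^*(x\triangleright y))=\omega_U((x^*\triangleright z)^*y)$; it is not obtained by ``taking the adjoint of the morphism defining $x\triangleright y$'' (the morphism-level adjoint of $x\triangleright y$ is a different object). Your claim is true, but its justification needs the unitarity of the Frobenius map (equivalently, a rotation identity for the standard solutions), not just $(R_U^*)^*=R_U$.

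The genuine gap is normality. A bounded $*$-representation of the dense $*$-subalgebra $\cup_n\vNN^{(n)}_{U\otimes\bar U}$ does not automatically extend to a \emph{normal} representation of the von Neumann algebra $\vNN_{U\otimes\bar U}$: each $\vNN^{(n)}_{U\otimes\bar U}$ is an $\ell^\infty$-type direct sum with plenty of non-normal bounded representations, and $\vNN_{U\otimes\bar U}$ contains weak limits not in the algebraic union. The ``standard positivity argument'' only gives $\|\pi(x)\|\le\|x\|$. The paper closes this gap by computing the vector state at the cyclic vector: $\bigl(\tilde\pi(x)\Lambda_{\omega_U}(1),\Lambda_{\omega_U}(1)\bigr)=d^\CC(U)^2\,\omega_{U\otimes\bar U}(e_Uxe_U)$, which is a normal functional on $\vNN_{U\otimes\bar U}$, and normality of the representation follows since $\Lambda_{\omega_U}(1)$ is cyclic. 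You should add this (or an equivalent) step; without it the extension of $\pi$ from $\cup_n\vNN^{(n)}_{U\otimes\bar U}$ to $\vNN_{U\otimes\bar U}$, and hence the comparison with the representation characterized after Lemma~\ref{l:basic-ext}, is not justified.
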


\bp Let us write $\tilde\pi(x)$ for the operators in the formulation
of the lemma. The origin of the formula for $\tilde\pi$ is the
Frobenius reciprocity isomorphism
$$
\CC(V^{\otimes n}\otimes U) \cong \CC(V^{\otimes n}, V^{\otimes
  n}\otimes U\otimes \bar U),\ \ T\mapsto (T\otimes\iota_{\bar
  U})(\iota_{V^{\otimes n}}\otimes\bar R_U),
$$
with inverse $S\mapsto (\iota\otimes R^*_U)(S\otimes\iota_{U})$. Up to
scalar factors these isomorphisms become unitary once we equip both
spaces with scalar products defined by the states $\omega^{(n)}_U$ and
$\omega^{(n)}_\un$, respectively. The algebra $\CC(V^{\otimes
  n}\otimes U\otimes \bar U)$ is represented on $ \CC(V^{\otimes
  n},V^{\otimes n}\otimes U\otimes \bar U)$ by the operators of
multiplication on the left. Being written on the space $\CC(V^{\otimes
  n}\otimes U)$, this representation is exactly $\tilde\pi$. Therefore
$\tilde\pi$ certainly defines a representation of the $*$-algebra
$\bigcup_n\vNN^{(n)}_{U\otimes\bar U}$ on the dense subspace $\bigcup_n
L^2(\vNN^{(n)}_U,\omega^{(n)}_U)$ of $L^2(\vNN_U,\omega_U)$. In order
to see that this representation extends to a normal representation of
$\vNN_{U\otimes\bar U}$, observe that the vector
$\Lambda_{\omega_U}(1)$ is cyclic and
$$
\left(\tilde\pi(x) \Lambda_{\omega_U}(1), \Lambda_{\omega_U}(1)\right)
= d^\CC(U)^2 \omega_{U\otimes\bar U}(e_{U} x e_{U}),
$$
since for every $z\in\CC(U\otimes\bar U)$ we have
\begin{align*} \tr_U((\iota_U\otimes R_U^*)(z\otimes\iota_U)(\bar
R_U\otimes\iota_U)) &=d^\CC(U)^{-1}\bar R_U^*(\iota_U\otimes
R_U^*\otimes\iota_{\bar U})(z\otimes\iota_U\otimes\iota_{\bar U})(\bar
R_U\otimes\iota_U\otimes\iota_{\bar U})\bar R_U\\ &=d^\CC(U)^{-1}\bar
R_U^*z\bar R_U=d^\CC(U)\tr_{U\otimes\bar U}(z\bar R_U^*\bar R_U)\\
&=d^\CC(U)^2\tr_{U\otimes\bar U}(ze_U)=d^\CC(U)^2\tr_{U\otimes\bar
U}(e_Uze_U).
\end{align*}

It is clear that
$\tilde\pi(x)\Lambda_{\omega_U}(y)=\Lambda_{\omega_U}(xy)$ for
$x\in\bigcup_n\vNN^{(n)}_U$, so $\tilde\pi$ extends the representation of
$\vNN_U$ on $L^2(\vNN_U,\omega_U)$. Therefore to prove that
$\pi=\tilde\pi$ it remains to show that $\tilde\pi(e_U)$ is the
projection onto $\overline{\Lambda_{\omega_U}(\vNN)}$, that~is,
$$
\tilde\pi(e_U)\Lambda_{\omega_U}(y)=\Lambda_{\omega_U}(E_U(y))\ \
\text{for}\ \ y\in\bigcup_n\vNN^{(n)}_U.
$$
But this is obvious, as $(\iota_U\otimes R^*_U)(\bar R_U\bar
R^*_U\otimes\iota_U)=\bar R^*_U\otimes\iota_U$.  \ep

It is easy to describe the modular group $\sigma^{\omega_U}$ of
$\omega_U$. For $s_* = (s_1, \ldots, s_n) \in (\supp \mu)^n$, let us
put
$$
\delta_{s_*} = \frac{\mu(s_1) \cdots \mu(s_n)}{d^\CC(U_{s_1}) \cdots
d^\CC(U_{s_n})}.
$$
Then
$$
\sigma^{\omega_U}_t(x)=\left(\frac{\delta_{s'_*}}{\delta_{s_*}}\right)^{it}x\
\ \text{for}\ \ x\in \CC(U_{s_n} \otimes \cdots \otimes U_{s_n}\otimes
U, U_{s'_n} \otimes \cdots \otimes U_{s'_1}\otimes U).
$$
What matters for us is that since the automorphisms
$\sigma^{\omega_U}_t$ are approximately implemented by unitaries in
$\vNN$, the relative commutant $\vNN'\cap\vNN_U$ is contained in the
centralizer of the state $\omega_U$.

\smallskip

Consider the modular conjugation $J=J_{\omega_U}$ on
$L^2(\vNN_U,\omega_U)$. By Lemma~\ref{l:basic-ext} and definition of
the basic extension we have
$$
J\vNN'J=\pi(\vNN_{U\otimes\bar U}).
$$
Therefore the map $x\mapsto Jx^*J$ defines a $*$-anti-isomorphism
$$
\vNN'\cap\vNN_U\cong \vNN_U'\cap\vNN_{U\otimes\bar U}.
$$
Identifying these relative commutants with $\PP(U)$ and $\PP(\bar U)$,
respectively, we get a $*$-anti-isomorphism $\PP(U)\cong\PP(\bar U)$,
which we denote by $\eta\mapsto\eta^\vee$.

\begin{lemma} \label{lvee} For every $\eta\in\PP(U)$ we have
$$
\eta^\vee=(R^*_U\otimes\iota_{\bar U})(\iota_{\bar
U}\otimes\eta\otimes\iota_{\bar U})(\iota_{\bar U}\otimes\bar R_U).
$$
\end{lemma}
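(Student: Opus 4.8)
The plan is to unwind the definition of $\eta\mapsto\eta^\vee$ into a single operator identity on $L^2(\vNN_U,\omega_U)$ and then verify it by a direct categorical computation. Write $x=\eta^{[\infty]}\in\vNN'\cap\vNN_U$ for the operator attached to $\eta$ by Proposition~\ref{prop:p-bdry-as-rel-comm-LR0}, and let $\zeta\in\PP(\bar U)$ denote the natural transformation on the right-hand side of the asserted formula. By construction $\eta^\vee$ is the unique element of $\PP(\bar U)$ with $(\iota_U\otimes\eta^\vee)^{[\infty]}=Jx^*J$, where we use the identification $\PP(\bar U)\cong\vNN_U'\cap\vNN_{U\otimes\bar U}$ of Proposition~\ref{prop:p-bdry-as-rel-comm-LR} with $U'=U$, together with $J\vNN'J=\pi(\vNN_{U\otimes\bar U})$. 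As this identification is injective, it suffices to prove the identity $\pi\bigl((\iota_U\otimes\zeta)^{[\infty]}\bigr)=Jx^*J$ of operators on $L^2(\vNN_U,\omega_U)$.

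First I would dispose of the right-hand side. As noted just before the lemma, $x$ lies in the centralizer of $\omega_U$, so $\sigma^{\omega_U}_z(x)=x$ and therefore $Jx^*J$ is precisely the right multiplication $\Lambda_{\omega_U}(a)\mapsto\Lambda_{\omega_U}(ax)$; in particular it belongs to $\vNN_U'$, as does $\pi\bigl((\iota_U\otimes\zeta)^{[\infty]}\bigr)$. Since $\Omega=\Lambda_{\omega_U}(1)$ is cyclic for $\vNN_U$, hence separating for $\vNN_U'$, it is enough to check that the two operators agree on $\Omega$. The right-hand side gives $Jx^*J\,\Omega=\Lambda_{\omega_U}(x)$, which is the limit of $\Lambda_{\omega_U}(\eta^{[n]})$.

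For the left-hand side I would work at finite level, applying the explicit formula of Lemma~\ref{lpi} to $\pi\bigl((\iota_U\otimes\zeta)^{[n]}\bigr)\Omega$ with $y=\iota$, and then pass to the limit using normality of $\pi$ and martingale convergence. Reading the resulting morphism off componentwise over $s_*\in(\supp\mu)^n$, its $U_{s_*}$-entry is $(\iota_{U_{s_*}\otimes U}\otimes R_U^*)(\zeta_{U_{s_*}\otimes U}\otimes\iota_U)(\iota_{U_{s_*}}\otimes\bar R_U\otimes\iota_U)$, so the whole problem collapses to the purely categorical claim that $(\iota_{Y\otimes U}\otimes R_U^*)(\zeta_{Y\otimes U}\otimes\iota_U)(\iota_Y\otimes\bar R_U\otimes\iota_U)=\eta_Y$ for every object $Y$; granting this, the level-$n$ morphism is exactly $\eta^{[n]}$ and the two limits coincide. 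Here one uses that $\zeta_Y=(\iota_Y\otimes R_U^*\otimes\iota_{\bar U})(\eta_{Y\otimes\bar U}\otimes\iota_{\bar U})(\iota_{Y\otimes\bar U}\otimes\bar R_U)$, obtained from the tensor-product and composition rules in $\PP$ and the fact that composing with morphisms of $\CC$ is ordinary composition of natural transformations.

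The substantive step, and the one I expect to be the main obstacle, is this last categorical identity. Substituting the expression for $\zeta_{Y\otimes U}$ leaves a string made of two copies of $\bar R_U$, two copies of $R_U^*$ and a single occurrence of $\eta$. I expect it to unwind in two stages: the outermost $\bar R_U$ and $R_U^*$ form a zigzag on the incoming $U$-leg which straightens by the conjugate equation $(\iota_U\otimes R_U^*)(\bar R_U\otimes\iota_U)=\iota_U$, feeding the input $U$ directly into the $U$-slot of $\eta$; the remaining $\bar R_U$ is attached to the $Z$-variable of $\eta$, so naturality of $\eta$ pulls it through, turning $\eta_{Y\otimes U\otimes\bar U}$ into $\eta_Y$ and producing a second zigzag that collapses by the same conjugate equation. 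What remains is exactly $\eta_Y$. The only real care required is bookkeeping of which tensor leg each $R_U$ and $\bar R_U$ acts on; no analytic input beyond the martingale convergence already used is needed.
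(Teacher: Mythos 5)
Your proposal is correct and follows essentially the same route as the paper: both reduce the identity $\pi\bigl((\iota_U\otimes\tilde\eta)^{[\infty]}\bigr)=Jx^*J$ to the explicit formula of Lemma~\ref{lpi} together with the fact that $x$ lies in the centralizer of $\omega_U$, and then verify a componentwise categorical identity (your $(\iota_{Y\otimes U}\otimes R_U^*)(\zeta_{Y\otimes U}\otimes\iota_U)(\iota_Y\otimes\bar R_U\otimes\iota_U)=\eta_Y$ is exactly the paper's equation \eqref{evee} combined with naturality of $\eta$ and one conjugate equation). Your only deviation --- checking agreement on the single vector $\Lambda_{\omega_U}(1)$ using that both operators lie in $\vNN_U'$ and that this vector is separating for the commutant, rather than on all $p_n\Lambda_{\omega_U}(y)$ --- is a legitimate minor streamlining.
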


\bp Consider the element $\tilde\eta=(R^*_U\otimes\iota_{\bar
U})(\iota_{\bar U}\otimes\eta\otimes\iota_{\bar U})(\iota_{\bar
U}\otimes\bar R_U)$. In terms of families of morphisms this means that
$$
\tilde\eta_X=(\iota_X\otimes R^*_U\otimes\iota_{\bar
U})(\eta_{X\otimes\bar U}\otimes\iota_{\bar
U})(\iota_X\otimes\iota_{\bar U}\otimes\bar R_U),
$$
or equivalently,
\begin{equation}\label{evee} (\iota_X\otimes
R^*_U)(\tilde\eta_X\otimes\iota_U)=(\iota_X\otimes
R^*_U)\eta_{X\otimes\bar U}.
\end{equation}

For every $n$ consider the projection $p_n\colon
L^2(\vNN_U,\omega_U)\to L^2(\vNN^{(n)}_U,\omega^{(n)}_U)$.  Let
$x\in\vNN'\cap\vNN_U$ be the element corresponding to $\eta$, and
$\tilde x\in\vNN_U'\cap\vNN_{U\otimes\bar U}$ be the element
corresponding to $\tilde\eta$. By Lemma~\ref{lpi} and the way we
represent $\tilde\eta$ by $\tilde x$, for every $y\in\vNN^{(n)}_U$ we
have
\begin{equation} \label{evee2} p_n\pi(\tilde
x)\Lambda_{\omega_U}(y)=\Lambda_{\omega_U}((\iota\otimes
R_U^*)(\tilde\eta_{V^{\otimes n}\otimes
U}\otimes\iota_U)(y\otimes\iota_{\bar U}\otimes \iota_U)(\iota\otimes
\bar R_U\otimes\iota_U)).
\end{equation} On the other hand, since $x$ is contained in the
centralizer of $\omega_U$, we have
\begin{align*}
p_nJx^*J\Lambda_{\omega_U}(y)&=p_n\Lambda_{\omega_U}(yx)=\Lambda_{\omega_U}(y
\eta_{V^{\otimes n}})\\ &=\Lambda_{\omega_U}((\iota\otimes
R_U^*)(y\otimes\iota_{\bar U}\otimes \iota_U)(\iota\otimes \bar
R_U\otimes\iota_U)\eta_{V^{\otimes n}})\\
&=\Lambda_{\omega_U}((\iota\otimes R_U^*)\eta_{V^{\otimes n}\otimes
U\otimes\bar U}(y\otimes\iota_{\bar U}\otimes \iota_U)(\iota\otimes
\bar R_U\otimes\iota_U)).
\end{align*} By \eqref{evee} the last expression equals \eqref{evee2},
so
$$
p_n\pi(\tilde x)\Lambda_{\omega_U}(y)=p_nJx^*J\Lambda_{\omega_U}(y).
$$
Since this is true for all $n$ and $y\in\vNN^{(n)}_U$, we conclude
that $\pi(\tilde x)=Jx^*J$.  \ep

\bp[Proof of Proposition~\ref{pminindex}] The operator valued weights
from $\vNN_U$ to $\vNN$ are parametrized by the positive elements $a
\in \vNN'\cap\vNN_U$ by $a \mapsto E^a$, where $E^a$ is defined by
$E^a(x) = E_U(a^{1/2} x a^{1/2})$. The map $E^a$ is a conditional
expectation if and only if the normalization condition $E_U(a)=1$
holds. Moreover, by the proof of~\cite{MR976765}*{Theorem~1},
$(E^a)^{-1}$ is given by $x \mapsto E_U^{-1}(a^{-1/2} x
a^{-1/2})$. Therefore we have
$$
[\vNN_U\colon\vNN]_0=\min_{\substack{a\in \vNN'\cap \vNN_U,\\
a>0}}E_U(a)E^{-1}_U(a^{-1})=\min_{\substack{a\in \vNN'\cap \vNN_U,\\
a>0}}d^\CC(U)^2E_U(a)\tilde E_U(Ja^{-1}J),
$$
where $\tilde E_U=d^\CC(U)^{-2}JE_U^{-1}(J\cdot
J)J\colon\pi(\vNN_{U\otimes\bar U})\to\vNN_U$.

If $a\in \vNN'\cap \vNN_U$ corresponds to $\eta\in\PP(U)$, we have
$$
E_U(a)=d^{\CC}(U)^{-1}\bar R_U(\eta\otimes\iota)\bar R_U^*
$$
By Lemma~\ref{l:basic-ext} we have $\tilde E_U(\pi(x))=\pi(E_{\bar
U}(x))$ for $x\in\vNN_{U\otimes\bar U}$. Hence by Lemma~\ref{lvee} we
get
\begin{align*} \tilde
E_U(Ja^{-1}J)&=d^{\CC}(U)^{-1}R_U^*((\eta^{-1})^\vee\otimes\iota)R_U\\
&=d^{\CC}(U)^{-1}R_U^*(R^*_U\otimes\iota_{\bar
U}\otimes\iota_U)(\iota_{\bar U}\otimes\eta^{-1}\otimes\iota_{\bar
U}\otimes\iota_U)(\iota_{\bar U}\otimes\bar R_U\otimes\iota_U)R_U\\
&=d^{\CC}(U)^{-1}R_U^*(\iota_{\bar U}\otimes\eta^{-1})R_U.
\end{align*} We thus conclude that $[\vNN_U\colon\vNN]_0$ is the
minimum of the products of the scalars
$$
\bar R_U(\eta\otimes\iota)\bar R_U^*\ \ \text{and}\ \
R_U^*(\iota\otimes\eta^{-1})R_U
$$
over all positive invertible $\eta\in\PP(U)$. This is exactly
$d^\PP(U)^2$.  \ep

\bp[Proof of Theorem~\ref{tminimal}] The estimate $\| \Gamma_U \| \le
d^\PP(U)$ comes for free.  We thus need to prove the opposite
inequality.

Let $E^\PP_U\colon\vNN_U\to\vNN$ be the minimal conditional
expectation.  Let us first assume that $\vNN_U$ (and hence $\vNN$) is
infinite.  Then by Proposition~\ref{pminindex} and
\cite{MR1096438}*{Corollary~7.2} we have the equalities
$$
2\log d^\PP(U)=\log \Ind E^\PP_U =H_{E^\PP_U}(\vNN_U | \vNN).
$$
Let $\epsilon > 0$ and $\psi$ be a normal state on $\vNN_U$ such that
$$H_{\psi}(\vNN_U | \vNN) \ge 2 \log d^\PP(U)
- \epsilon.$$

When $A$ is a finite subset of $\supp \mu$, consider the projection
$p_A = \bigoplus_{s \in A} \iota_s$ in $\vNN^{(1)}$. If $A_1, \ldots,
A_n$ are finite subsets of $\supp\mu$, then $p_{A_*} = p_{A_n} \otimes
\cdots \otimes p_{A_1}$ is a projection in $\vNN^{(n)}$, and we
consider the corresponding corner
$$
\vNN_U^{A_*} = p_{A_*}\vNN_U^{(n)} p_{A_*} = \bigoplus_{\substack{s_i,
s'_i \in A_i\\i=1,\ldots,n}} \CC(U_{s_n} \otimes \cdots \otimes
U_{s_1} \otimes U, U_{s'_n} \otimes \cdots \otimes U_{s'_1} \otimes U)
$$
in $\vNN^{(n)}_U$ and the similarly defined corner $\vNN^{A_*}$ in
$\vNN^{(n)}$. When $\psi(p_{A_*})\ne0$, define also a state
$\psi_{A_*}$ on~$\vNN_U^{A_*} $ by $
\psi_{A_*}=\psi(p_{A_*})^{-1}\psi(p_{A_*}\cdot p_{A_*}).  $ By
the lower semicontinuity of relative entropy, we can find~$n$ and finite
sets $A_1, \ldots, A_n$ such that
$$
H_{\psi_{A_*}}(\vNN^{A_*}_U | \vNN^{A_*}) \ge H_\psi(\vNN_U | \vNN) -
\epsilon.
$$
By Proposition~\ref{prop:stt-rel-entropy-graph-norm}, the inclusion
matrix $\Gamma_{A_*, U}$ of $\vNN^{A_*} \subset \vNN_U^{A_*}$
satisfies $$2 \log \| \Gamma_{A_*,U} \|\ge H_{\psi_{A_*}}(
\vNN_U^{A_*} | \vNN^{A_*}).$$ Therefore we have the estimate
$$
\log \| \Gamma_{A_*, U} \|\ge\log d^\PP(U)- \epsilon.
$$
But the transpose of the matrix $\Gamma_{A_*,U}$ is obtained from
$\Gamma_U$ by considering only columns that correspond to the simple
objects appearing in the decomposition of $U_{s_n} \otimes \cdots
\otimes U_{s_1} $ for $s_i\in A_i$, and then removing the zero
rows. Hence
$$
\|\Gamma_U \|\ge \| \Gamma_{A_*,U} \|.
$$
Since $\epsilon$ was arbitrary, we thus get $\| \Gamma_U \| \ge
d^\PP(U)$.

\smallskip

If $\vNN_U$ is finite, we consider the inclusion $\vNN \vNotimes M
\subset \vNN_U \vNotimes M$ for some infinite hyperfinite von Neumann
algebra $M$ with a prescribed strongly operator dense increasing
sequence $M_{n_k}(\C)\subset M$; for example, we could take a Powers
factor $R_\lambda$ with the usual copies of $M_2(\C)^{\otimes k}$ in
it.  Then the minimal conditional expectation $\vNN_U \vNotimes M \to
\vNN \vNotimes M$ is given by $E^\PP_U \otimes \iota$, and its index
equals that of $E^\PP_U$. Since the inclusion matrix of $\vNN^{A_*}
\otimes M_{n_k}(\C) \subset \vNN_U^{A_*} \otimes M_{n_k}(\C)$ is the
same as that of $\vNN^{A_*} \subset \vNN_U^{A_*}$, we can then argue
in the same way as above.  \ep

Since amenability of dimension functions is preserved under
homomorphisms of fusion algebras by
\cite{MR1644299}*{Proposition~7.4}, we get the following corollary.

\begin{corollary}\label{cor:from-P-eq-C-to-amen} Let
$\Pi\colon\CC\to\PP$ be the Poisson boundary of a rigid C$^*$-tensor
category with respect to an ergodic probability measure on
$\Irr(\CC)$. Then $\PP$ is an amenable C$^*$-tensor category.
\end{corollary}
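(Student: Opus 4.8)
The plan is to transport amenability of the minimal dimension function from $\CC$ to $\PP$ along the functor $\Pi$, using that $\Pi$ induces a homomorphism of fusion algebras. First I would record the two inputs that are already available. Since an ergodic measure on $\Irr(\CC)$ exists, Proposition~\ref{pweakamen} shows that $\CC$ is weakly amenable, so Theorems~\ref{tuniversal1} and~\ref{tminimal} both apply. Combining them gives
$$
d^\PP\Pi=\dmin^\CC,\qquad \dmin^\CC(U)=\|\Gamma_U\|\ \text{for every object }U\text{ in }\CC,
$$
so that $d^\PP\Pi=\dmin^\CC$ is an amenable dimension function, i.e.\ $(\Z[\Irr(\CC)],\dmin^\CC)$ is an amenable fusion algebra.

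Next I would make precise that $\PP$ is a category to which the notion of amenability applies. Because $\mu$ is ergodic, the tensor unit of $\PP$ is simple, and since $\PP$ is generated by the rigid category $\CC$ (with $\Pi(\bar U)$ providing a dual of $\Pi(U)$) it is itself a rigid C$^*$-tensor category with simple unit, carrying its own fusion algebra $(\Z[\Irr(\PP)],d^\PP)$. The tensor functor $\Pi$ then induces a homomorphism of fusion algebras
$$
\Pi_*\colon\Z[\Irr(\CC)]\to\Z[\Irr(\PP)],\qquad [U]\mapsto[\Pi(U)],
$$
which by the previous paragraph is dimension preserving, in the sense that $d^\PP(\Pi_*[U])=(d^\PP\Pi)([U])=\dmin^\CC([U])$. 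Moreover $\PP$ is obtained by completing, with respect to subobjects, a category whose objects are exactly those of $\CC$; hence every element of $\Irr(\PP)$ is a subobject of some $\Pi(U)$, and the image of $\Pi_*$ generates $\Z[\Irr(\PP)]$ as a fusion algebra.

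Finally I would invoke \cite{MR1644299}*{Proposition~7.4}, which states that amenability of a dimension function is preserved under a dimension-preserving homomorphism of fusion algebras with generating image. Applied to $\Pi_*$, and using that $\dmin^\CC=d^\PP\Pi$ is amenable on $\CC$, this yields that $d^\PP$ is the amenable dimension function on $\PP$; by definition $\PP$ is then amenable.

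The point that needs the most care — and what I expect to be the main obstacle — is matching the precise hypotheses of \cite{MR1644299}*{Proposition~7.4} and checking that it transfers amenability in the required direction, namely from the source fusion algebra (where amenability was established through Theorem~\ref{tminimal}) to the target $\PP$. The relevant norms $\|\Gamma_U\|$ and $\|\Gamma_{\Pi(U)}\|$ live on the different Hilbert spaces $\ell^2(\Irr(\CC))$ and $\ell^2(\Irr(\PP))$, and it is exactly this comparison that \cite{MR1644299}*{Proposition~7.4} supplies, provided the image of $\Pi_*$ generates the target. That generating property is precisely the statement that $\PP$ is the subobject completion of the image of $\CC$, while ergodicity of $\mu$ is what guarantees that $\PP$ has simple unit and is therefore a genuine rigid C$^*$-tensor category to which amenability applies.
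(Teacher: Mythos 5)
Your proposal is correct and follows essentially the same route as the paper: the paper derives this corollary in one line from Theorem~\ref{tminimal} by citing \cite{MR1644299}*{Proposition~7.4} on preservation of amenability under homomorphisms of fusion algebras, exactly as you do. Your additional verifications (simplicity of the unit of $\PP$ via ergodicity, and that the image of $\Pi_*$ generates $\Z[\Irr(\PP)]$ because $\PP$ is the subobject completion of the image of $\CC$) are the implicit hypotheses the paper leaves to the reader.
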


Combining this with Corollary~\ref{camenb} we get the following
categorical version of the
Furstenberg--Kaimanovich--Vershik--Rosenblatt characterization of
amenability.

\begin{theorem} \label{tFKVR} A rigid C$^*$-tensor category $\CC$ is
amenable if and only if there is a probability measure~$\mu$ on
$\Irr(\CC)$ such that the Poisson boundary of $(\CC,\mu)$ is
trivial. Furthermore, the Poisson boundary of an amenable C$^*$-tensor
category is trivial for any ergodic probability measure.
\end{theorem}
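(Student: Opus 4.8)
The plan is to read the theorem off the results already assembled in Sections~\ref{suniversal} and~\ref{sec:amen-minim-dimens}: no new computation is required, and the statement is essentially the combination of Corollaries~\ref{camenb} and~\ref{cor:from-P-eq-C-to-amen}. The only point that deserves a moment of care is the link between triviality of the boundary and ergodicity of $\mu$, since the corollaries I intend to use are stated only for ergodic measures.

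First I would dispose of the ``only if'' implication together with the final sentence. If $\CC$ is amenable, then it is in particular weakly amenable, so Proposition~\ref{pweakamen} guarantees that an ergodic measure $\mu$ on $\Irr(\CC)$ exists. Corollary~\ref{camenb} asserts that for an amenable $\CC$ the Poisson boundary of $(\CC,\mu)$ is trivial for \emph{every} ergodic $\mu$; this simultaneously exhibits the desired measure and proves the ``Furthermore'' assertion, so both claims involving amenability of $\CC$ as a hypothesis come for free.

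For the ``if'' direction, suppose $\mu$ is a probability measure for which the Poisson boundary $\Pi\colon\CC\to\PP$ is trivial. The key preliminary step is to observe that triviality forces $\mu$ to be ergodic: by definition triviality means that $\Pi$ is an equivalence of categories, so in particular the full faithfulness at the pair $(\un,\un)$ gives $\End{\PP}{\un}\cong\End{\CC}{\un}=\C$, i.e.\ the tensor unit of $\PP$ is simple, which is precisely ergodicity of $\mu$. With $\mu$ now known to be ergodic, Corollary~\ref{cor:from-P-eq-C-to-amen} applies and shows that $\PP$ is an amenable C$^*$-tensor category. Since $\Pi$ is a unitary monoidal equivalence, it induces an isomorphism of fusion algebras carrying $d^\PP$ to $d^\CC$ and intertwining the operators $\Gamma_U$; hence amenability of $\PP$ transfers to $\CC$. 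Alternatively one may argue that $\dmin^\CC=d^\PP\Pi=d^\CC$ by Theorem~\ref{tuniversal1} and that $\dmin^\CC$ is amenable by Theorem~\ref{tminimal}, whence $d^\CC$ itself is the amenable dimension function.

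The proof is thus largely bookkeeping, and the single step I expect to deserve attention is the implication ``boundary trivial $\Rightarrow$ $\mu$ ergodic'', since ergodicity is exactly what licenses the invocation of Corollary~\ref{cor:from-P-eq-C-to-amen} (and of Theorems~\ref{tuniversal1} and~\ref{tminimal}), all of which are stated only for ergodic measures. Once that implication is in hand, the equivalence $\Pi$ transports amenability back to $\CC$ and no genuine obstacle remains.
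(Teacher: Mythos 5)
Your proposal is correct and follows the same route as the paper, which obtains the theorem by combining Corollary~\ref{camenb} (for the ``only if'' direction and the final assertion) with Corollary~\ref{cor:from-P-eq-C-to-amen} (for the ``if'' direction, transporting amenability of $\PP$ back to $\CC$ along the equivalence $\Pi$). Your explicit remark that triviality of the boundary forces $\mu$ to be ergodic --- via $\End{\PP}{\un}\cong\End{\CC}{\un}=\C$ --- is a small point the paper leaves implicit, and you are right that it is exactly what licenses the appeal to the ergodic-measure corollaries.
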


Therefore we can say that while weak amenability can be detected by
studying classical Poisson boundaries of random walks on the fusion
algebra, for amenability we have to consider noncommutative, or
categorical, random walks. We can also say that nontriviality of the
Poisson boundary $\Pi\colon\CC\to\PP$ with respect to an ergodic
measure shows how far a weakly amenable category $\CC$ is from being
amenable.

\bigskip

\section{Amenable functors}

In this section we will give another characterization of amenability
in terms of invariant means. We know that on the level of fusion
algebras existence of invariant means is not enough for
amenability. Therefore we need a more refined categorical notion.

\begin{definition} Let $\CC$ be a C$^*$-tensor category and
  $F\colon\CC\to\A$ be a unitary tensor functor into a C$^*$-tensor
  category $\A$ with possibly nonsimple unit. A \emph{right invariant
    mean} for $F$ is a collection $m=(m_{U,V})_{U,V}$ of linear
  maps $$m_{U,V}\colon \hat{\CC}(U, V) \to \A(F(U),F(V))$$ that are
  natural in $U$ and $V$ and satisfy the following properties:
\begin{enumerate}
\item the maps $m_U=m_{U,U}\colon \hat{\CC}(U)\to
\End{\A}{F(U)}$ are unital and positive;

\medskip

\item for any $\eta\in \hat{\CC}(U, V)$ and any object $Y$ in
  $\CC$ we have
$$
m_{U\otimes Y,V\otimes
Y}(\eta\otimes\iota_Y)=F_2(m_{U,V}(\eta)\otimes\iota_{F(Y)});
$$

\item for any $\eta\in \hat{\CC}(U, V)$ and any object $Y$ in
  $\CC$ we have
$$
m_{Y\otimes U,Y\otimes V}(\iota_Y\otimes\eta)=F_2(\iota_{F(Y)}\otimes
m_{U,V}(\eta)).
$$
\end{enumerate}

If a right invariant mean for $F$ exists, we say that $F$ is
\emph{amenable}.
\end{definition}

Note that naturality of $m_{U,V}$ and property (i) in the above
definition easily imply that the maps~$m_U$ are completely positive,
and $m_{U,V}(\eta)^*=m_{V,U}(\eta^*)$.  As usual, we omit subscripts
and simply write $m$ instead of $m_{U, V}$ when there is no confusion.

\smallskip

The relevance of this notion for categorical random walks is explained
by the following simple observation, similar to the easy part of
Proposition~\ref{pweakamen}.

\begin{proposition} \label{ppoissonamenability} Let $\CC$ be a rigid
C$^*$-tensor category, $\mu$ be a probability measure on $\Irr(\CC)$,
and $\Pi\colon\CC\to\PP$ be the Poisson boundary of~$(\CC,\mu)$. Then
the functor $\Pi\colon\CC\to\PP$ is amenable.
\end{proposition}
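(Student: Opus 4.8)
The plan is to build the mean by averaging the Markov operator $P_\mu$ and extracting a weak$^*$ limit, exactly as a classical invariant mean on $\ell^\infty(\Irr(\CC))$ is produced from Ces\`aro averages of a random walk. Fix a free ultrafilter $\omega$ on $\N$ once and for all, and for objects $U$ and $V$ set $A_n=\frac1n\sum_{k=0}^{n-1}P_\mu^k$ on the Banach space $\hat\CC(U,V)$. Realizing $\hat\CC(U,V)$ as the corner $p_V\hat\CC(U\oplus V)p_U$ of the von Neumann algebra $\hat\CC(U\oplus V)$ exhibits it as a dual space on which $P_\mu$ acts as a normal contraction (it is normal, unital and completely positive on $\hat\CC(U\oplus V)$, and it preserves the corner because $p_U,p_V$ come from $\CC$). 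Each $A_n(\eta)$ lies in the ball of radius $\|\eta\|$, so I would define
$$
m_{U,V}(\eta)=\text{weak}^*\text{-}\lim_{\omega}A_n(\eta),
$$
a well-defined, linear, norm-contractive map. Using the \emph{same} ultrafilter for every pair $(U,V)$ is what makes the whole family $(m_{U,V})$ coherent.

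The first point to check is that the range lands in $\PP(U,V)$, i.e.\ in the $P_\mu$-harmonic elements; this is the one place where averaging is essential. From $(P_\mu A_n-A_n)(\eta)=\frac1n(P_\mu^n-\mathrm{id})(\eta)$ we get $\|P_\mu A_n(\eta)-A_n(\eta)\|\le\frac{2}{n}\|\eta\|\to0$, and since $P_\mu$ is normal its fixed-point set is weak$^*$-closed, so $m_{U,V}(\eta)$ is harmonic. Unitality is immediate, since $\iota_U=\Pi(\iota_U)$ is already harmonic and hence $A_n(\iota_U)=\iota_U$; positivity and complete positivity pass to the limit from the fact that every $P_\mu^k$, and hence every $A_n$, is unital and completely positive; and $m_{U,V}(\eta)^*=m_{V,U}(\eta^*)$ because the $A_n$ are $*$-preserving. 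This establishes property (i).

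It remains to verify naturality in $U,V$ together with the module identities (ii) and (iii). The strategy is to first check the corresponding identities at the level of $A_n$ and then push them through $\text{weak}^*\text{-}\lim_\omega$. A direct partial-trace computation shows that each $P_s$, and therefore each $A_n$, commutes with pre- and post-composition by morphisms of $\CC$, which gives naturality. The module identities reduce, via the relations $P_X(\nu\otimes\iota)=P_X(\nu)\otimes\iota$ and $P_X(\iota\otimes\eta)=\iota\otimes P_X(\eta)$ recorded in Section~3, to $A_n(\eta\otimes\iota_Y)=A_n(\eta)\otimes\iota_Y$ and $A_n(\iota_Y\otimes\eta)=\iota_Y\otimes A_n(\eta)$. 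One should observe here that for harmonic $\nu$ the naive operations $\nu\mapsto\nu\otimes\iota_Y$ and $\nu\mapsto\iota_Y\otimes\nu$ coincide with the tensor products $\nu\otimes\iota_{\Pi(Y)}$ and $\iota_{\Pi(Y)}\otimes\nu$ computed in $\PP$: the functor $\Pi$ is strict, so $F_2$ is trivial, and the identity morphism is a unit for the product $\cdot$.

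The main obstacle is making these identities survive the passage to the limit, which requires that the two maps $\eta\mapsto\eta\otimes\iota_Y$ and $\eta\mapsto\iota_Y\otimes\eta$ (the latter given by $(\iota_Y\otimes\eta)_X=\eta_{X\otimes Y}$) be weak$^*$-continuous, i.e.\ normal, between the relevant corners of the algebras $\hat\CC(\,\cdot\,)$. The first is a plain amplification and is manifestly normal; the second is the normal $*$-homomorphism induced by pre-tensoring with $Y$, whose action on the coordinates $(\eta_s)_s$ is, via naturality and a basis of isometries $U_t\to U_s\otimes Y$, a finite sum of compressions of the $\eta_t$. Granting this normality, the identities for $A_n$ commute with $\text{weak}^*\text{-}\lim_\omega$ and yield (ii) and (iii), while normality of left and right multiplication by fixed morphisms yields naturality. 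This produces a right invariant mean for $\Pi$, so the functor $\Pi\colon\CC\to\PP$ is amenable.
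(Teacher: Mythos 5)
Your proposal is correct and is essentially the paper's own proof: the paper defines $m(\eta)_X=\lim_{n\to\omega}\frac1n\sum_{k=0}^{n-1}P^k_\mu(\eta)_X$ along a free ultrafilter and notes that the invariance properties follow from the identities $P_X(\eta\otimes\iota)=P_X(\eta)\otimes\iota$ and $P_X(\iota_Y\otimes\eta)=\iota_Y\otimes P_X(\eta)$. Your weak$^*$ limit coincides with this componentwise limit (on bounded sets of $\hat\CC(U\oplus V)$ the weak$^*$ topology is that of componentwise convergence in the finite-dimensional summands), so your extra care about normality, while sound, is just a reformulation of the same argument.
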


\bp Fix a free ultrafilter $\omega$ on $\N$, and then define
$$
m(\eta)_X=\lim_{n\to\omega}\frac{1}{n}\sum^{n-1}_{k=0}P^k_\mu(\eta)_X.
$$
All the required properties of a right invariant mean follow
immediately by definition. For example, property (iii) in the
definition follows from the identity $P_X(\iota_Y\otimes
\eta)=\iota_Y\otimes P_X(\eta)$.  \ep

For functors into categories with nonsimple units we do not have much
insight into the meaning of amenability. But if we fall back to our
standard assumption of simplicity of tensor units, we have the
following result.

\begin{theorem}\label{tamnefunctor} Let $\A$ and $\CC$ be rigid C$^*$-tensor
categories with simple units and $F\colon\CC\to\A$ be a unitary tensor functor. Then~$F$
is amenable if and only if $\CC$ is weakly amenable and $d^\A F$ is
the amenable dimension function on~$\CC$.
\end{theorem}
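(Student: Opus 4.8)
The plan is to prove the two implications separately: the implication from the right-hand side rests on the universal property of the boundary, while the converse extracts from a mean both a classical invariant mean (giving weak amenability) and, by feeding the canonical positive element of Lemma~\ref{luniversalau} through the mean, the sharp dimension estimate. For the direction ($\Leftarrow$), assume $\CC$ is weakly amenable and $d^\A F$ is the amenable dimension function. By Proposition~\ref{pweakamen} there is an ergodic measure $\mu$, and I form the Poisson boundary $\Pi\colon\CC\to\PP$. Theorems~\ref{tuniversal1} and~\ref{tminimal} give $d^\PP\Pi(U)=\dmin^\CC(U)=\|\Gamma_U\|$, so the hypothesis reads $d^\A F=d^\PP\Pi$, and universality yields a unitary tensor functor $\Lambda\colon\PP\to\A$ with $\Lambda\Pi\cong F$. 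Since $\Pi$ is amenable by Proposition~\ref{ppoissonamenability}, I transport its invariant mean $m^\Pi$: the maps $m_{U,V}=\Lambda\circ m^\Pi_{U,V}$ are natural, unital and positive (because $\Lambda$ restricts to unital $*$-homomorphisms on endomorphism algebras), and properties (ii),(iii) for $m$ follow from those for $m^\Pi$ together with multiplicativity of $\Lambda_2$, recalling $\Pi_2=\iota$. This direction is routine once the factorization is in hand.

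For the converse, assume $F$ is amenable with mean $m$; as in Section~\ref{sstensorfunctors} I may take $F$ to be an embedding, so $m$ restricts to the inclusion on $\CC$-morphisms. The first task is weak amenability of $\CC$. Writing $P'_s$ for the right averaging operator $(P'_sf)_X=(\iota\otimes\tr_s)(f_{X\otimes U_s})$ on $\hat\CC(\un)=\ell^\infty(\Irr(\CC))$, I would close the extra box $U_s$ with the intrinsic standard solution $\bar R_s\in\CC$ and use naturality of $m$ and property (iii) to rewrite $m_\un(P'_sf)$; since $m_\un(f)$ lies in the commutative algebra $\End{\A}{F(\un)}$, whose elements are central in $\A$, the factor $\bar R_s^*\bar R_s=d(s)$ cancels the normalisation and one gets $m_\un(P'_sf)=m_\un(f)$ on the nose. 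Composing with any state on $\End{\A}{F(\un)}$ then produces a right invariant mean on $\ell^\infty(\Irr(\CC))$, so $\CC$ is weakly amenable by \cite{MR1644299}*{Proposition~4.2}.

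Knowing $\CC$ weakly amenable, I fix an ergodic $\mu$ and form $\Pi\colon\CC\to\PP$, so that $d^\PP\Pi(U)=\dmin^\CC(U)=\|\Gamma_U\|$ and $\End{\PP}{\un}=\C$. Let $\rho_U\in\End{\PP}{U}$ be the positive invertible element of Lemma~\ref{luniversalau} for $\PP$, so that $(\iota\otimes\rho_U^{1/2})R_U$ and $(\rho_U^{-1/2}\otimes\iota)\bar R_U$ form a standard solution in $\PP$. Applying the mean, $b=m(\rho_U)\in\End{\A}{F(U)}$ is positive and invertible, and $((\iota\otimes b^{1/2})R_U,(b^{-1/2}\otimes\iota)\bar R_U)$ is a (generally non-standard) solution of the conjugate equations for $F(U)$ in $\A$. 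Using naturality of $m$ and properties (ii),(iii) to pull the scalars $R_U^*(\iota\otimes\rho_U)R_U=\bar R_U^*(\rho_U^{-1}\otimes\iota)\bar R_U=d^\PP(U)$ (scalars because $\End{\PP}{\un}=\C$ and the solution is standard) through the mean, the first vector has $\|r\|^2=d^\PP(U)$ exactly, while operator convexity of $t\mapsto t^{-1}$ gives $b^{-1}\le m(\rho_U^{-1})$ and hence $\|\bar r\|^2\le d^\PP(U)$. Therefore $d^\A(F(U))\le d^\PP(U)=\|\Gamma_U\|$; combined with the general bound $\|\Gamma_U\|\le d^\A(F(U))$ for the dimension function $d^\A F$, this yields $d^\A(F(U))=\|\Gamma_U\|$, i.e.\ $d^\A F$ is amenable.

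The main obstacle, and the crux of the whole argument, is this last step: recognising that passing the canonical boundary element $\rho_U$ through an arbitrary mean manufactures a solution in $\A$ whose norms are controlled by the minimal dimension $d^\PP(U)$, thereby forcing $d^\A F$ down to the amenable dimension function. The delicate points I would check carefully are the interchange of $m$ with pre- and post-composition by the fixed $\CC$-morphisms $R_U,\bar R_U$ (resting on naturality of $m$ and on property (iii) in the form $m(\iota_X\otimes\eta)=\iota_{F(X)}\otimes m(\eta)$, and on the fact that composition in $\PP$ with morphisms coming from $\CC$ agrees with ordinary composition, so the scalar identities transport verbatim) and the Jensen/operator-convexity estimate, both of which mirror Lemma~\ref{luniversaldim}.
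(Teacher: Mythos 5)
Your proof is correct. The backward implication and the weak-amenability half of the forward implication coincide with the paper's argument (the latter is exactly Lemma~\ref{lweakamen}, with your right averaging operator $P'_s$ being the paper's $Q_{U_s}$). Where you genuinely diverge is in forcing $d^\A F$ down to $\|\Gamma_U\|$: the paper proves that the restriction $\Lambda_U$ of $m_U$ to $\End{\PP}{U}$ is multiplicative (Lemma~\ref{lmult2}, via the identity $\Theta_U\Lambda_U=\mathrm{id}$ and faithfulness of $\Theta_U$), so that the mean assembles into a unitary tensor functor $\PP\to\A$ and the inequality $d^\A(U)\le d^\PP(U)$ follows from the general fact that tensor functors do not increase dimension. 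You instead bypass multiplicativity entirely and feed the canonical element $\rho_U$ of Lemma~\ref{luniversalau} through the mean, producing an explicit solution $\bigl((\iota\otimes m(\rho_U)^{1/2})R_U,(m(\rho_U)^{-1/2}\otimes\iota)\bar R_U\bigr)$ of the conjugate equations in $\A$ whose two norms are controlled by naturality of $m$ (for the exact leg) and by Jensen's inequality $m(\rho_U)^{-1}\le m(\rho_U^{-1})$ (for the other leg); this mirrors Lemma~\ref{luniversaldim} with $m$ playing the role of $\Theta$ in the opposite direction. Your route is shorter and more self-contained for the dimension equality, at the cost of not extracting from the mean the tensor functor $\PP\to\A$ that it actually encodes. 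The delicate points you flag do all go through: composition in $\PP$ with morphisms coming from $\CC$ is the pointwise one, so $R_U^*(\iota\otimes\rho_U)R_U$ and $\bar R_U^*(\rho_U^{-1}\otimes\iota)\bar R_U$ are literally the constant function $d^\PP(U)$ in $\hat\CC(\un)$ and pass through $m_\un$ unchanged by unitality; and since the inclusion of $\End{\PP}{U}$ with the product $\cdot$ into the von Neumann algebra $\hat\CC(U)$ is a complete order embedding, the restriction of $m_U$ to $\End{\PP}{U}$ is unital completely positive for the $\cdot$-structure, which is exactly what the Jensen estimate requires (the inverse $\rho_U^{-1}$ being taken with respect to $\cdot$, just as in Lemma~\ref{luniversaldim}).
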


Let $F\colon\CC\to\A$ be an amenable unitary tensor functor with a
right invariant mean $m$. For simplicity we assume as usual that $\CC$
and $\A$ are strict and $F$ is an embedding functor. Let us start by
showing that existence of $F$ implies weak amenability.

\begin{lemma} \label{lweakamen} The linear functional
$m_\un\colon\hat\CC(\un)\cong\ell^\infty(\Irr(\CC))\to\End{\A}{\un}\cong\C$
is a right invariant mean on the fusion algebra of $\CC$ equipped with
the dimension function $d^\CC$.
\end{lemma}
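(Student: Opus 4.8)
The plan is to exhibit $m_\un$ as a state on $\ell^\infty(\Irr(\CC))$ that is invariant under the Markov operators implementing right multiplication in the fusion algebra. Since the tensor unit of $\A$ is simple under the standing assumption of this section, we have $\End{\A}{\un}\cong\C$, so property~(i) of the invariant mean makes $m_\un\colon\hat{\CC}(\un)\to\C$ a unital positive functional, that is, a state. Using strictness of $\CC$, $\A$ and $F$, I would also record properties~(ii) and~(iii) in the simplified form $m_{U\otimes Y}(\eta\otimes\iota_Y)=m_U(\eta)\otimes\iota_Y$ and $m_{Y\otimes U}(\iota_Y\otimes\eta)=\iota_Y\otimes m_U(\eta)$.

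It remains to prove invariance. Write $f\in\ell^\infty(\Irr(\CC))$ for the function corresponding to $\eta\in\hat{\CC}(\un)$, and let $P^r_s$ be the right analogue of $P_s$, defined by $(P^r_s\eta)_X=(\iota_X\otimes\tr_s)(\eta_{X\otimes U_s})$; a short trace computation gives $(P^r_sf)(a)=\sum_b m^b_{as}\,\frac{d(b)}{d(a)d(s)}\,f(b)$, so that $P^r_s$ is dual to right multiplication by $[U_s]$ and the identity $m_\un(P^r_s\eta)=m_\un(\eta)$ is exactly right invariance. To reach $P^r_s$ I would first set $\nu=\iota_{U_s}\otimes\eta\in\hat{\CC}(U_s)$, so $\nu_X=\eta_{X\otimes U_s}$, and apply~(iii) to get $m_{U_s}(\nu)=m_\un(\eta)\,\iota_{U_s}$ (a scalar multiple of the identity as $m_\un(\eta)\in\C$); then I would tensor on the right by $\bar U_s$ and apply~(ii) to obtain $m_{U_s\otimes\bar U_s}(\nu\otimes\iota_{\bar U_s})=m_\un(\eta)\,\iota_{U_s\otimes\bar U_s}$.

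The bridge to $P^r_s$ is the realization of the partial trace through the standard solution: for every $X$,
\[
d(s)\,(P^r_s\eta)_X=(\iota_X\otimes\bar R_s^*)(\eta_{X\otimes U_s}\otimes\iota_{\bar U_s})(\iota_X\otimes\bar R_s),
\]
which says precisely that $d(s)\,P^r_s\eta=\bar R_s^*\cdot(\nu\otimes\iota_{\bar U_s})\cdot\bar R_s$ as bounded natural transformations, where the pre- and post-composition are by the constant transformations attached to the $\CC$-morphisms $\bar R_s\colon\un\to U_s\otimes\bar U_s$ and $\bar R_s^*$. Naturality of $m$ in both variables then yields
\[
d(s)\,m_\un(P^r_s\eta)=\bar R_s^*\,m_{U_s\otimes\bar U_s}(\nu\otimes\iota_{\bar U_s})\,\bar R_s=m_\un(\eta)\,\bar R_s^*\bar R_s=d(s)\,m_\un(\eta),
\]
using $\bar R_s^*\bar R_s=\norm{\bar R_s}^2=d(s)$ for the standard solution. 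Dividing by $d(s)$ gives $m_\un(P^r_s\eta)=m_\un(\eta)$ for all $s$, so $m_\un$ is a right invariant mean.

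I do not expect a genuine obstacle, this being the easy half of the argument. The points needing care are purely organizational: assembling the partial-trace formula together with~(ii) and~(iii) in the right order, and watching the left/right conventions so that tensoring $\eta$ by $U_s$ on the inside right (via $\iota_{U_s}\otimes\eta$) produces the right operator $P^r_s$ and not $P_s$. It is also worth stressing that the entire computation lives in $\hat{\CC}$ with ordinary pointwise composition, not the harmonic product of $\PP$, so that naturality of $m$ applies verbatim to composition with $\bar R_s$ and $\bar R_s^*$.
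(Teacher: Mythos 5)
Your proof is correct and follows essentially the same route as the paper: the paper introduces the right Markov operators $Q_X(\eta)=d^\CC(X)^{-1}\bar R^*_X(\iota_X\otimes\eta\otimes\iota_{\bar X})\bar R_X$ (your $P^r_s$, up to notation) and applies $m$ using naturality together with properties (ii) and (iii), exactly as you do. The only difference is that you spell out the intermediate applications of (ii) and (iii) and the identification of $P^r_s$ with right multiplication in the fusion algebra, which the paper leaves implicit.
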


\bp In addition to the operators $P_X$ on $\hat\CC(\un)$ we
normally use, we also have the operators $Q_X$ given by
$$
Q_X(\eta)_Y=d^\CC(X)^{-1}(\iota_Y\otimes\tr_X)(\eta_{Y\otimes
X})=d^\CC(X)^{-1} (\iota_Y\otimes\bar R_X^*)(\eta_{Y\otimes
X}\otimes\iota_{\bar X})(\iota_Y\otimes \bar R_X),
$$
where $(R_X,\bar R_X)$ is a standard solution of the conjugate
equations for $X$ in $\CC$. Since
$$
\eta_{Y\otimes X}\otimes\iota_{\bar
X}=(\iota_X\otimes\eta\otimes\iota_{\bar X})_Y,
$$
we can write this as
$$
Q_X(\eta)=d^\CC(X)^{-1}\bar R^*_X(\iota_X\otimes\eta\otimes\iota_{\bar
X})\bar R_X.
$$
Applying the invariant mean we get
$$
m(Q_X(\eta))=d^\CC(X)^{-1}\bar R^*_X(\iota_X\otimes
m(\eta)\otimes\iota_{\bar X})\bar R_X=m(\eta).
$$
Thus $m_\un$ is a right invariant mean on
$\ell^\infty(\Irr(\CC))$.  \ep

Since $\CC$ is weakly amenable, we can choose an ergodic probability
measure and consider the corresponding Poisson boundary
$\Pi\colon\CC\to\PP$. We then have the following result, which has its
origin in Tomatsu's considerations in~\cite{MR2335776}*{Section~4}.

\begin{lemma} \label{lmult2} For every object $U$ in $\CC$ the map
$\Lambda_U\colon\End{\PP}{U}\to\End{\A}{U}$ obtained by
restricting~$m_U$ to~$\End{\PP}{U}$ is multiplicative.
\end{lemma}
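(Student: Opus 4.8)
The plan is to reduce multiplicativity to a single Kadison--Schwarz saturation and to extract the decisive inequality from the tracial model of the boundary. By polarization it suffices to prove $m_U(\eta^*\cdot\eta)=m_U(\eta)^*m_U(\eta)$ for every $\eta\in\End{\PP}{U}$, since by Choi's description of the multiplicative domain this is exactly the assertion that $\Lambda_U$ is a $*$-homomorphism for the product $\cdot$. I first note that $\Lambda_U$ is unital completely positive \emph{for}~$\cdot$: the product-positive elements are the $\xi^*\cdot\xi=\lim_nP^n_\mu(\xi^*\xi)$, which are pointwise positive, so positivity of $m_U$ on $\hat{\CC}(U)$ descends to $\Lambda_U$, and $m_U$ fixes the constant transformation $\iota_U$. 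Hence Kadison--Schwarz gives the lower bound $m_U(\eta^*\cdot\eta)=\Lambda_U(\eta^*\cdot\eta)\ge\Lambda_U(\eta)^*\Lambda_U(\eta)$, and the whole problem is the reverse inequality.

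The clean way to force the reverse inequality would be to produce a faithful unital completely positive left inverse $\Theta$ of $\Lambda_U$: from $\Theta\Lambda_U=\mathrm{id}$ one gets $\eta^*\cdot\eta=\Theta\Lambda_U(\eta^*\cdot\eta)\ge\Theta(\Lambda_U(\eta)^*\Lambda_U(\eta))\ge\eta^*\cdot\eta$, whence equality and, by faithfulness of $\Theta$, multiplicativity. The natural candidate is the Poisson integral $\Theta_U$ of Lemmas~\ref{luniharmonic}--\ref{lthetaid}, which is faithful and unital completely positive; but $\Theta_U\Lambda_U=\mathrm{id}$ would amount to $m_U$ recovering the boundary values of harmonic elements, and I expect this to fail for a general invariant mean. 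The honest route is therefore operator-algebraic, following Tomatsu~\cite{MR2335776}*{Section~4}: choosing $\mu$ symmetric with full support (Proposition~\ref{pweakamen}) I pass to the finite tracial realization of Proposition~\ref{prop:PP-equals-HY}, identifying $\End{\PP}{U}$ with $\A'\cap\A_U$ inside the finite von Neumann algebra $(\A_U,\tau_U)$, so that $\cdot$ becomes the genuine product and the martingale $\eta^{\{n\}}$ converges to $\eta^{\{\infty\}}$ in $L^2(\A_U,\tau_U)$. Here the faithful normal trace $\tau_U$ is meant to supply the normality that the mean lacks.

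The main obstacle is precisely this lack of normality, compounded by a left--right mismatch: the product $\cdot$ is built from the left operators $P_\mu$, whereas the mean is only \emph{right} invariant, $m_U\circ Q_X=m_U$ for every $X$ (this is the computation of Lemma~\ref{lweakamen}, which extends to all $U$ via the bimodule properties (ii) and (iii)). The plan for bridging this is to use the symmetry of $\mu$ and the duality $\eta\mapsto\eta^\vee$ to convert right invariance into the trace identity $\Tr\circ\Lambda_U=\tau_U$ on $\A'\cap\A_U$, where $\Tr$ is the categorical trace on $\End{\A}{U}$. Granting this, the positive element $\Lambda_U(\eta^*\cdot\eta)-\Lambda_U(\eta)^*\Lambda_U(\eta)$ has vanishing $\Tr$ --- with the $L^2(\A_U,\tau_U)$-convergence of $\eta^{\{n\}}$ doing the work that normality of $m_U$ cannot --- so faithfulness of $\Tr$ forces it to be $0$, giving $m_U(\eta^*\cdot\eta)=\Lambda_U(\eta)^*\Lambda_U(\eta)$ and hence multiplicativity. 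I expect establishing the trace-compatibility $\Tr\circ\Lambda_U=\tau_U$, and with it the passage of the identity to the $L^2$-limit, to be the genuinely delicate point.
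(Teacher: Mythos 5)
Your ``clean way'' is in fact the paper's proof, and your reason for abandoning it is mistaken: the identity $\Theta_U\Lambda_U=\mathrm{id}$ \emph{does} hold for an arbitrary right invariant mean. What you are overlooking is ergodicity of $\mu$: since $\End{\PP}{\un}=\C$ and $m_\un$ is unital, the mean fixes every scalar. Concretely, for $S\in\End{\CC}{U}$ one computes
$$
\tr_U\bigl(S\,E(m(\eta))\bigr)=\psi_U(m(S\eta))=d(U)^{-1}\,\bar R_U^*(m(S\eta)\otimes\iota)\bar R_U=d(U)^{-1}\,m\bigl(\bar R_U^*(S\eta\otimes\iota)\bar R_U\bigr),
$$
and $\bar R_U^*(S\eta\otimes\iota)\bar R_U$ lies in $\End{\PP}{\un}=\C$, so $m$ returns the scalar $\bar R_U^*(S\eta_\un\otimes\iota)\bar R_U=d(U)\tr_U(S\eta_\un)$; hence $E(m(\eta))=\eta_\un$, i.e.\ $\Theta\Lambda(\eta)_\un=\eta_\un$. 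Applying this to $\iota_X\otimes\eta$ and using property (iii) of the mean gives $\Theta\Lambda(\eta)_X=E(\iota_X\otimes m(\eta))=E(m(\iota_X\otimes\eta))=(\iota_X\otimes\eta)_\un=\eta_X$, so $\Theta_U\Lambda_U=\mathrm{id}$ and your faithful-left-inverse mechanism closes the proof. In other words, the mean really does recover the boundary values of harmonic elements; that is precisely the content of the lemma, forced by simplicity of the unit of $\PP$ together with the bimodule properties of $m$.

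The fallback route you propose instead does not close. Even granting the trace compatibility $\Tr\circ\Lambda_U=\tau_U$, you obtain $\Tr(\Lambda_U(\eta^*\cdot\eta))=\tau_U(\eta^*\cdot\eta)$, but this says nothing about $\Tr(\Lambda_U(\eta)^*\Lambda_U(\eta))$, which is not of the form $\Tr(\Lambda_U(\,\cdot\,))$. To conclude that the positive element $\Lambda_U(\eta^*\cdot\eta)-\Lambda_U(\eta)^*\Lambda_U(\eta)$ has vanishing trace you would additionally need $\Tr(\Lambda_U(\eta)^*\Lambda_U(\eta))\ge\tau_U(\eta^*\cdot\eta)$, i.e.\ that $\Lambda_U$ is an $L^2$-isometry --- and for a trace-preserving unital completely positive map this is equivalent to the multiplicativity you are trying to prove, so the argument is circular at exactly the delicate point you flag. (A further minor issue: the operators $Q_X$ of Lemma~\ref{lweakamen} do not obviously extend to $\hat\CC(U)$ for $U\ne\un$, since the object to be traced out sits in the middle of $Y\otimes X\otimes U$.) The repair is simply to carry out the step you set up and then discarded.
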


\bp Recall that in Section~\ref{suniversal} we constructed faithful
unital completely positive maps
$\Theta_U\colon\End{\A}{U}\to\End{\PP}{U}$, $\Theta_U(T)_X=E_{X\otimes
U}(\iota\otimes T)$. By faithfulness of $\Theta_U$, the multiplicative
domain of $\Theta_U\Lambda_U$ is contained in that of
$\Lambda_U$. Therefore in order to prove the lemma it suffices to show
that $\Theta_U\Lambda_U$ is the identity map.

Let us show first that for any $\eta\in\End{\PP}{U}$ we have $\Theta
\Lambda(\eta)_\un=\eta_\un$, that is,
$$
E(m(\eta))=\eta_\un.
$$
Take $S\in\End{\CC}{U}$. Then we have
$$
\tr_U(SE(m(\eta)))=\psi_U(m(S\eta))=d^\CC(U)^{-1}\bar
R_U^*(m(S\eta)\otimes\iota)\bar R_U =d^\CC(U)^{-1}m(\bar
R_U^*(S\eta\otimes\iota)\bar R_U).
$$
Since the element $\bar R_U^*(S\eta\otimes\iota)\bar R_U$ lies in
$\End{\PP}{\un}$, it is scalar. This scalar must be equal to
$$\bar R_U^*(S\eta_\un\otimes\iota)\bar R_U=\tr_U(S\eta_\un).$$
Hence we obtain
$$
\tr_U(SE(m(\eta)))=\tr_U(S\eta_\un),
$$
and since this is true for all $S$, we get $\Theta
\Lambda(\eta)_\un=\eta_\un$.

Now, for any object $X$ in $\CC$, we use the above equality for
$\iota_X\otimes\eta$ instead of $\eta$ and get
$$
\Theta \Lambda(\eta)_X=E(\iota_X\otimes
m(\eta))=E(m(\iota_X\otimes\eta))=\Theta
\Lambda(\iota_X\otimes\eta)_\un =(\iota_X\otimes\eta)_\un=\eta_X,
$$
which implies the desired equality $\Theta \Lambda(\eta)=\eta$.  \ep

\bp[Proof of Theorem~\ref{tamnefunctor}] Consider an amenable unitary
tensor functor $F\colon\CC\to\A$.
By Lemma~\ref{lweakamen} we know that
$\CC$ is weakly amenable. By Lemma~\ref{lmult2} and the definition of
invariant means, any right invariant mean for $F$ defines a strict
unitary tensor functor $\Lambda\colon\tilde\PP\to\A$, where
$\tilde\PP\subset\PP$ is the full subcategory consisting of objects in
$\CC$. Extend this functor to a unitary tensor functor
$\Lambda\colon\PP\to\A$. Then $d^\A(U)\le d^\PP(U)$ for any object $U$
in $\CC$, but since by Theorem~\ref{tminimal} the dimension function
$d^\PP\Pi$ on $\CC$ is amenable, we conclude that
$d^\A(U)=d^\PP(U)=\|\Gamma_U\|$.

\smallskip

Conversely, assume $\CC$ is weakly amenable and $F\colon\CC\to\A$ is a
unitary tensor functor such that $d^\A F$ is the amenable dimension
function. Then by Theorem~\ref{tuniversal1} there exists a unitary
tensor functor $\Lambda\colon\PP\to\A$ such that $\Lambda\Pi\cong
F$. By Proposition~\ref{ppoissonamenability} there exists a right
invariant mean for the functor $\Pi\colon\CC\to\PP$. Composing it with
the functor $\Lambda$ we get a right invariant mean for $\Lambda\Pi$,
from which we get a right invariant mean for $F$.  \ep

Applying Theorem~\ref{tamnefunctor} to the identity functor we get a
characterization of amenability of tensor categories in terms of
invariant means.

\begin{theorem} A rigid C$^*$-tensor category $\CC$ is amenable if and
only if the identity functor $\CC\to\CC$ is amenable.
\end{theorem}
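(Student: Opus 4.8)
The plan is to deduce this statement immediately from Theorem~\ref{tamnefunctor} by specializing to $\A=\CC$ and $F=\mathrm{id}\colon\CC\to\CC$. The identity functor is a (strict) unitary tensor functor, and the dimension function it induces is $d^\CC\,\mathrm{id}=d^\CC$, the intrinsic dimension function. Theorem~\ref{tamnefunctor} then reads: the identity functor is amenable if and only if $\CC$ is weakly amenable and $d^\CC$ is the amenable dimension function on $\CC$. So the whole task reduces to reconciling this two-clause criterion with the definition of amenability of $\CC$, namely that $d^\CC(U)=\|\Gamma_U\|$ for every object $U$, i.e.\ that $d^\CC$ is the amenable dimension function.

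For the ``if'' direction I would argue as follows. If the identity functor is amenable, then Theorem~\ref{tamnefunctor} gives in particular that $d^\CC$ is the amenable dimension function; by definition this is exactly amenability of $\CC$, and no further input is needed. For the ``only if'' direction, suppose $\CC$ is amenable, so that $d^\CC$ is the amenable dimension function. The single point still to check is the weak amenability clause of Theorem~\ref{tamnefunctor}. But this is automatic: as recorded in the Preliminaries following Hiai--Izumi (via~\cite{MR1644299}*{Proposition~4.2}), amenability of the fusion algebra implies weak amenability. Hence both hypotheses of Theorem~\ref{tamnefunctor} are met and the identity functor is amenable.

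The only ``obstacle'' here is the minor bookkeeping around the weak amenability clause, which is the reason the two-clause statement of Theorem~\ref{tamnefunctor} collapses to a single condition for the identity functor; there is no substantive new argument, as Theorem~\ref{tamnefunctor} (and, through it, Theorems~\ref{tuniversal1} and~\ref{tminimal}) already carries all the weight. I would therefore present the proof as a short two-line invocation of Theorem~\ref{tamnefunctor} together with the remark that amenability implies weak amenability.
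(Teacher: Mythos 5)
Your proposal is correct and matches the paper exactly: the paper derives this theorem by applying Theorem~\ref{tamnefunctor} to the identity functor, with the weak amenability clause absorbed because amenability implies weak amenability. No further comment is needed.
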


Note that by the proof of Theorem~\ref{tamnefunctor}, given an
amenable C$^*$-tensor category $\CC$, we can construct a right
invariant mean for the identity functor as follows. Choose an ergodic
probability measure $\mu$ on $\Irr(\CC)$ and a free ultrafilter
$\omega$ on $\N$. Then we can define
$$
m(\eta)=\lim_{n\to\omega}\frac{1}{n}\sum^{n-1}_{k=0}
P^k_\mu(\eta)_\un.
$$
On the other hand, the construction of a right invariant mean for a
functor $F\colon\CC\to\A$ such that $\CC$ is weakly amenable, but not
amenable, and $d^\A F$ is the amenable dimension function, is more
elusive, as it relies on the existence of a factorization of $F$
through the Poisson boundary $\CC\to\PP$.

\bigskip

\section{Amenability of quantum groups and subfactors}

In this section we apply some of our results to categories considered
in the theory of compact quantum groups and in subfactor theory.

\subsection{Quantum groups}

Let $G$ be a compact quantum group. We follow the conventions
of~\cite{MR3204665}. In particular, the algebra $\C[G]$ of
regular functions on $G$ is a Hopf $*$-algebra, and by a finite
dimensional unitary representation of $G$ we mean a unitary element
$U\in B(H_U)\otimes\C[G]$, where $H_U$ is a finite dimensional Hilbert
space, such that $(\iota\otimes\Delta)(U)=U_{12}U_{13}$. Finite
dimensional unitary representations form a rigid C$^*$-tensor category
$\Rep G$, with the tensor product of $U$ and $V$ defined by
$U_{13}V_{23}\in B(H_U)\otimes B(H_V)\otimes\C[G]$. The categorical
dimension of $U$ is equal to the quantum dimension, given by the trace  $\Tr(\rho_U)$ of the Woronowicz character.

\smallskip

Recall that $G$ is called \emph{coamenable} if $\|\Gamma_U\|=\dim H_U$
for every finite dimensional unitary representation $U$. There are a
number of equivalent conditions, but using this definition as our
starting point we immediately get that
$$
\Rep G \ \ \text{is amenable}\Leftrightarrow G\ \ \text{is coamenable,
  and of Kac type}.
$$

Coamenability of $G$ is known to be equivalent to amenability of the
dual discrete quantum group~$\hat G$. Recall that the algebra of
bounded functions on $\hat G$ is defined by $\ell^\infty(\hat
G)=\ell^\infty\text{-}\bigoplus_{s\in\Irr(G)}B(H_s)$, and the coproduct
$\Dhat\colon\ell^\infty(\hat G)\to\ell^\infty(\hat
G)\vNotimes\ell^\infty(\hat G)$ is defined by duality from the product
on $\C[G]$, if we view $\ell^\infty(\hat G)$ as a subspace of
$\C[G]^*$ by associating to a functional $\omega\in\C[G]^*$ the
collection of operators $\pi_s(\omega)=(\iota\otimes\omega)(U_s)\in
B(H_s)$, $s\in\Irr(G)$. The quantum group $\hat G$ is called \emph{amenable},
if there exists a right invariant mean on $\hat G$, that is, a state
$m$ on $\ell^\infty(\hat G)$ such that
$$
m(\iota\otimes\phi)\Dhat=\phi(\cdot)1\ \ \text{for any normal linear
functional}\ \phi\ \text{on}\ \ell^\infty(\hat G).
$$
The restriction of such an invariant mean to $Z(\ell^\infty(\hat
G))\cong\ell^\infty(\Irr(G))$ defines a right invariant mean on the
fusion algebra of $\Rep G$ equipped with the quantum dimension
function. Therefore
$$
G\ \ \text{is coamenable}\Leftrightarrow\hat G \ \ \text{is amenable}\Rightarrow \Rep G\ \ \text{is weakly
amenable}.
$$

Among various known characterizations of coamenability the implication
($\hat G$ is amenable $\Rightarrow$ $G$ is coamenable) is probably the
most nontrivial. This was proved independently
in~\cite{MR2276175}*{Theorem~3.8}
and in~\cite{MR2113848}*{Corollary~9.6}. We will show now that our
results on amenable functors are generalizations of this.

\begin{theorem} If $\hat G$ is amenable, then the forgetful functor
$F\colon\Rep G\to\Hilb_f$ is amenable, and therefore $G$ is
coamenable.
\end{theorem}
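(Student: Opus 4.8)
The plan is to derive coamenability of $G$ from amenability of the forgetful functor, and to establish the latter by manufacturing a right invariant mean for $F$ directly out of the invariant mean on $\hat G$. Granting for the moment that $F$ is amenable, the forward direction of Theorem~\ref{tamnefunctor} applies to $\CC=\Rep G$ and $\A=\Hilb_f$: it forces $\CC$ to be weakly amenable and the dimension function $d^{\Hilb_f}F$ to be the amenable one. Since $F$ is the forgetful functor we have $F(U)=H_U$ and $d^{\Hilb_f}(F(U))=\dim H_U$, so amenability of $d^{\Hilb_f}F$ reads $\dim H_U=\|\Gamma_U\|$ for every $U$, which is exactly coamenability of $G$. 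Thus the entire statement reduces to constructing a right invariant mean for $F$ from an invariant mean $m$ on $\ell^\infty(\hat G)$.

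To build this mean I first recast bounded natural transformations representation-theoretically. Fixing objects $U,V$, a transformation $\eta\in\hat\CC(U,V)$ is determined by its components $\eta_s\in\CC(U_s\otimes U,U_s\otimes V)\subset B(H_s)\otimes B(H_U,H_V)$ on the simple objects, so $\hat\eta=(\eta_s)_s$ is a bounded element of $\ell^\infty(\hat G)\vNotimes B(H_U,H_V)$. Naturality together with semisimplicity yield the uniform formula $\eta_Z=(\pi_Z\otimes\iota)(\hat\eta)$ for every object $Z$, where $\pi_Z\colon\ell^\infty(\hat G)\to B(H_Z)$ is the representation attached to $Z$ and $\pi_{Z\otimes Z'}=(\pi_Z\otimes\pi_{Z'})\Dhat$. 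With this encoding I set $$m_{U,V}(\eta)=(m\otimes\iota)(\hat\eta)\in B(H_U,H_V)=\A(F(U),F(V)).$$ As $B(H_U,H_V)$ is finite dimensional, the slice $m\otimes\iota$ is defined for the (possibly non-normal) state $m$.

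It then remains to verify the axioms of a right invariant mean, and here the coproduct of $\hat G$ does the work. Unitality and positivity of $m_U$ are immediate from $m(1)=1$ and positivity of $m$; naturality in $U,V$ holds because $m\otimes\iota$ commutes with pre- and post-composition by morphisms of $\CC$ on the $B$-leg; and the right tensoring axiom is automatic, since $\eta\otimes\iota_Y$ has encoding $\hat\eta\otimes\iota_{H_Y}$, which leaves the $\ell^\infty(\hat G)$-leg untouched. The substantial point, and the step I expect to be the crux, is the left tensoring axiom: from $(\iota_Y\otimes\eta)_X=\eta_{X\otimes Y}$ and $\pi_{X\otimes Y}=(\pi_X\otimes\pi_Y)\Dhat$ one reads off that $\iota_Y\otimes\eta$ has encoding $(\iota\otimes\pi_Y\otimes\iota)((\Dhat\otimes\iota)(\hat\eta))$. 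Applying $m\otimes\iota$ and invoking the invariance $(m\otimes\iota)\Dhat(a)=m(a)1$, which is precisely amenability of $\hat G$ (passing to a two-sided invariant mean if the convention requires it), collapses the extra leg and gives $m_{Y\otimes U,Y\otimes V}(\iota_Y\otimes\eta)=\iota_{F(Y)}\otimes m_{U,V}(\eta)$; note that $F_2$ is the identity for the forgetful functor, so this is the required identity.

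With all three axioms checked, $F$ is amenable, and the reduction of the first paragraph then yields coamenability of $G$. The delicate part of the write-up will be the bookkeeping in the third paragraph: one must confirm that the categorical operation $\iota_Y\otimes(\cdot)$ is implemented by $\Dhat$ with the correct ordering of legs, so that the single invariance equation for $m$ acts on the intended tensor factor. Getting this compatibility exactly right is the heart of the argument; the remaining verifications are formal consequences of the slice-map formalism.
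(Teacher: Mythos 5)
Your proposal is correct and follows essentially the same route as the paper: identify $\hat\CC(U,V)$ with (a subspace of) $\ell^\infty(\hat G)\otimes B(H_U,H_V)$ via the components on simple objects, define the mean as $m\otimes\iota$, verify the left-tensoring axiom through the formula $\iota_Y\otimes\eta=(\iota\otimes\pi_Y\otimes\iota)(\Dhat\otimes\iota)(\hat\eta)$ together with right invariance of $m$, and then invoke Theorem~\ref{tamnefunctor} to get $\|\Gamma_U\|=\dim H_U$. The only cosmetic difference is that the paper cites the characterization of the image of this identification (via the adjoint action) from an external reference, whereas you work directly with the encoding, which suffices for the argument.
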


\bp We will only consider the case when $\Irr(G)$ is at most
countable, so that $\Rep G$ satisfies our standing assumptions, the
general case can be easily deduced from this.

As discussed in \cite{MR3291643}*{Section~4.1}, the space
$\hat{\CC}(U, V)$ can be identified with the space of elements
$$
\eta\in \ell^\infty(\hat G)\otimes B(H_U,H_V)\ \ \text{such that}\ \
V^*_{31}(\alpha\otimes\iota)(\eta)U_{31}=1\otimes \eta,
$$
where $\alpha\colon \ell^\infty(\hat G)\to L^\infty(G)\vNotimes
\ell^\infty(\hat G)$ is the left adjoint action of $G$. Under this
identification we have
$$
\iota_Y\otimes\eta=(\iota\otimes\pi_Y\otimes\iota)(\Dhat\otimes\iota)(\eta),
$$
where $\pi_Y\colon\ell^\infty(\hat G)\to B(H_Y)$ is the representation
defined by $Y$, while the element $\eta\otimes\iota_Y$ has the obvious
meaning. From this we immediately see that if $m$ is a right invariant
mean on $\hat G$, then the maps $m\otimes\iota\colon \ell^\infty(\hat
G)\otimes B(H_U,H_V)\to B(H_U,H_V)$ define a right invariant mean for
$F$. Thus $F$ is amenable. By Theorem~\ref{tamnefunctor} we conclude
that $\|\Gamma_U\|=\dim F(U)=\dim H_U$ for every $U$, so $G$ is
coamenable.  \ep

As for the Poisson boundary of $\Rep G$, from the universal property of the Poisson boundary it is easy to
deduce that if $G$ is coamenable (and so $\Rep G$ is weakly amenable), then the Poisson boundary of $\Rep G$ with respect to any
ergodic measure is the forgetful functor $\Rep G\to \Rep K$, where
$K\subset G$ is the maximal quantum subgroup of $G$ of Kac type. This
will be discussed in detail in~\cite{MR3556413}.

\subsection{Subfactor theory}

Let $N\subset M$ be a finite index inclusion of II$_1$-factors. Denote
by $\tau$ the tracial state on $M$, and by $E$ the trace-preserving
conditional expectation $M\to N$. We denote $[M : N]=\Ind E$, and
the minimal index of $N \subset M$ by $[M : N]_0$. Put $M_{-1}=N$,
$M_0=M$, and choose a tunnel
$$
\dots\subset M_{-3}\subset M_{-2}\subset M_{-1}\subset M_0,
$$
so that $M_{-n+1}$ is the basic extension of $M_{-n-1}\subset M_{-n}$
for all $n\ge1$. For every $j\le 1$ denote by $M_j^\st\subset M_j$ the
$s^*$-closure of $\bigcup_{n\ge1}(M_{j-n}'\cap M_j)$ with respect to the
restriction of $\tau$. The inclusion $N^\st\subset M^\st$ of finite
von Neumann algebras is called a standard model of $N\subset
M$~\cite{MR1278111}.

Let $\BB_N(M)$ be the full C$^*$-tensor subcategory of the category
$\Hilb_N$ of Hilbert bimodules over~$N$ generated by~$L^2(M)$.
Let $M_1$ be the basic extension of $N\subset M$, so that
$\Endd_\NN(L^2(M))\cong N'\cap M_1$.  The embedding $N \to M_1$
induces a morphism $L^2(N) \to L^2(M) \otimes_N L^2(M)$ in $\BB_N(M)$,
which defines a solution of the conjugate equations for $L^2(M)$ up to a scalar normalization. Moreover, it can be shown (compare with Proposition~\ref{pminindex}) that the categorical trace corresponds to the minimal conditional expectation $M_1\to N$, and consequently $d(L^2(M))=[M_1 : N]^{1/2}_0=[M : N]_0$.  It is also known, see Proposition~\ref{pocneanu}, that the inductive system of the algebras
$\Endd_\NN(L^2(M)^{\otimes_Nn})$, with respect to the embeddings
$T\mapsto \iota_{L^2(M)}\otimes T$, can be identified with
$(M_{-2n+1}'\cap M_1)_{n\ge1}$ in such a way that the shift
endomorphism $T\mapsto T\otimes\iota_{L^2(M)}$ of
$\bigcup_{n\ge1}\Endd_\NN(L^2(M)^{\otimes_Nn})$ corresponds to the
endomorphism $\gamma^{-1}$ of $\bigcup_{n\ge1}(M_{-2n+1}'\cap M_1)$,
where $\gamma$ is the canonical shift.

The normalized categorical trace on $\Endd_\NN(L^2(M))$ defines a
probability measure~$\mu_\st$ on the set of isomorphism classes of
simple submodules of $L^2(M)$. More explicitly, it can be shown that
the value of the normalized categorical trace on any minimal
projection $p\in N'\cap M_1$ equals
$$
(\tau(p)\tau'(p))^{1/2}\frac{[M : N]}{[M : N]_0},
$$
where $\tau'$ is the unique tracial state on $N'\subset B(L^2(M))$. See~\cite{MR976765}*{Section~2}
and~\cite{MR1278111}*{Section~1.3.6} for related results. Then the
measure~$\mu_\st$ is defined~by
$$
\mu_\st([pL^2(M)])=m_p(\tau(p)\tau'(p))^{1/2}\frac{[M :
N]}{[M : N]_0},
$$
where $m_p$ is the multiplicity of $pL^2(M)$ in $L^2(M)$.

Recall that an inclusion for which $[M : N]=[M : N]_0$, is called
extremal.  From the above considerations, unless $N\subset M$ is extremal,
we see that the categorical trace defines a tracial state of
$\bigcup_{n\ge1}(M_{-2n+1}'\cap M_1)$ that is different from~$\tau$.

\smallskip

Let us first review what our results say about $(\BB_N(M),\mu_\st)$ for extremal inclusions. From the identification of $\bigcup_{n\ge1}\Endd_\NN(L^2(M)^{\otimes_Nn})$  with $\bigcup_{n\ge1}(M_{-2n+1}'\cap M_1)$ we conclude that the von Neumann algebra $\vNN_{L^2(M)}$ constructed in Section~\ref{sec:longo-roberts-appr} is
isomorphic to $M_1^\st$. More precisely, we take $V=L^2(M)$ for the
construction of~$\vNN_{L^2(M)}$, so unless $N'\cap M_1$ is abelian, we
apply the modification of our construction of the algebras~$\vNN_U$
discussed in Remark~\ref{rmultiplicity}. The subalgebra
$\vNN\subset\vNN_{L^2(M)}$ corresponds then to $N^\st=\gamma^{-1}(M_1^\st)\subset
M_1^\st$. In particular, $N^\st$ is a factor if and only if $\mu_\st$ is ergodic. Proposition~\ref{prop:p-bdry-as-rel-comm-LR0}
translates now into the following statement, which is closely related to a
result of Izumi~\cite{MR2059809}.

\begin{proposition} \label{pIzumi} Let $\Pi\colon\BB_N(M)\to\PP$ be the Poisson boundary of   $(\BB_N(M),\mu_\st)$. Then, assuming that $N\subset M$ is extremal, we have
$$
\PP(L^2(M))\cong ({N^\st})'\cap M_1^\st.
$$
\end{proposition}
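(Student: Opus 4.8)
The plan is to obtain the statement as a translation of Proposition~\ref{prop:p-bdry-as-rel-comm-LR0} for the object $U=L^2(M)$, once the two von Neumann algebras occurring there have been matched with $N^\st$ and $M_1^\st$. First I would note that $\mu_\st$ is generating: its support consists of the simple submodules of $L^2(M)$, and these generate $\BB_N(M)$ by definition, so every simple object occurs in some $L^2(M)^{\otimes_N n}$. Because $L^2(M)=\oplus_i U_{s_i}$ is a sum of simple submodules, in general with multiplicities, I would run the construction in the multiplicity form of Remark~\ref{rmultiplicity}, taking $V=U=L^2(M)$; Proposition~\ref{prop:p-bdry-as-rel-comm-LR0} then yields an isomorphism of von Neumann algebras
$$
\PP(L^2(M))\cong\vNN'\cap\vNN_{L^2(M)}.
$$
Everything then reduces to identifying the inclusion $\vNN\subset\vNN_{L^2(M)}$ with $N^\st\subset M_1^\st$.

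For this I would invoke the identification recorded just above the statement. By Proposition~\ref{pocneanu} the inductive system $\cup_n\Endd_\NN(L^2(M)^{\otimes_N n})$, with connecting maps $T\mapsto\iota_{L^2(M)}\otimes T$, is matched with the tower $\cup_{n\ge1}(M_{-2n+1}'\cap M_1)$ of relative commutants of the chosen tunnel, in such a way that the shift $T\mapsto T\otimes\iota_{L^2(M)}$ becomes the canonical shift $\gamma^{-1}$. Since $\vNN_{L^2(M)}$ is the completion of $\cup_n\Endd_\NN(L^2(M)^{\otimes_N(n+1)})$ and $\vNN=\vNN_\un$ sits inside it through $x\mapsto x\otimes\iota_{L^2(M)}$, this carries $\vNN$ onto $\gamma^{-1}(M_1^\st)=N^\st$ inside $M_1^\st$. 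Thus at the level of the $\ast$-algebras $\cup_n\vNN^{(n)}$ the inclusion $\vNN\subset\vNN_{L^2(M)}$ is already $N^\st\subset M_1^\st$ before completion.

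The one place where extremality is indispensable, and which I expect to be the main point to check, is that the state completing the categorical inductive system is the same as the one defining the standard model. The limit state $\omega_{L^2(M)}$ is, level by level, the categorical trace composed with the expectations $\E_{n,0}$, whereas $M_1^\st$ is by definition the $s^*$-closure of the union of the relative commutants taken with respect to the canonical trace $\tau$. As explained before the statement, these two traces agree on $\cup_{n}(M_{-2n+1}'\cap M_1)$ exactly when $[M:N]=[M:N]_0$, that is, when $N\subset M$ is \emph{extremal}; away from extremality the categorical trace differs from $\tau$ and the two $s^*$-closures need not coincide. Granting extremality, the GNS completions agree, so $\vNN_{L^2(M)}\cong M_1^\st$ with $\vNN\cong N^\st$, and feeding this into the isomorphism of the first paragraph gives $\PP(L^2(M))\cong(N^\st)'\cap M_1^\st$, as required.
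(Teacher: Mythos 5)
Your proposal is correct and follows essentially the same route as the paper, which proves this proposition through the discussion preceding its statement: take $V=L^2(M)$ in the multiplicity form of Remark~\ref{rmultiplicity}, use Proposition~\ref{pocneanu} to match $\vNN\subset\vNN_{L^2(M)}$ with $\gamma^{-1}(M_1^\st)=N^\st\subset M_1^\st$ (with extremality guaranteeing that the categorical trace used to complete the inductive system agrees with $\tau$), and then apply Proposition~\ref{prop:p-bdry-as-rel-comm-LR0}. Your explicit checks that $\mu_\st$ is generating and that extremality is exactly what makes the two completions coincide are the right points to isolate.
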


More generally, by the same argument we have
$\PP(L^2(M)^{\otimes_Nn})\cong ({M_{-2n+1}^\st})'\cap M_1^\st$. Since
$L^2(M)$ contains a copy of the unit object induced by the inclusion
$N \to M$, we have $\mu_\st(e)>0$. Hence the supports of $\mu^n_\st$
are increasing, and therefore the isomorphisms
$\PP(L^2(M)^{\otimes_Nn})\cong ({M_{-2n+1}^\st})'\cap M_1^\st$
completely describe the morphisms in the category $\PP$. In fact,
recalling that $M_1^\st$ is the basic extension of $N^\st\subset
M^\st$, see ~\cite{MR1278111}*{Section~1.4.3}, we may conclude that
$\PP$ can be identified with $\BB_{N^\st}(M^\st)$. We leave it to the
interested reader to find a good description of the functor $\Pi\colon
\BB_N(M)\to\BB_{N^\st}(M^\st)$.

\smallskip

Consider the principal graph $\Gamma_{N,M}$ of $N\subset M$. Then
$\Gamma_{L^2(M)}$ can be identified with $\Gamma_{M,M_1}
\Gamma_{M,M_1}^t$. Recall also that we have the equality
$\|\Gamma_{N,M}\| = \|\Gamma_{M,M_1}\|$
by~\cite{MR1278111}*{Section~1.3.5}.

Turning now to Theorem~\ref{tminimal} and Proposition~\ref{pminindex},
we get the following result (again, to be more precise we use the
modification of the construction of~$\vNN_U$ described in
Remark~\ref{rmultiplicity}).

\begin{theorem} \label{tPopaext} Assume $N \subset M$ is extremal and
  $N^\st$ is a factor. Then we have
$$
\|\Gamma_{N,M}\|^4=[M_1^\st : N^\st]_0.
$$
\end{theorem}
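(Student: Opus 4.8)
The plan is to combine the two main theorems of the paper with the subfactor-theoretic identifications developed in this final section. The key observation is that Theorem~\ref{tminimal}, applied to the category $\CC=\BB_N(M)$ and the measure $\mu_\st$, tells us that the minimal dimension function is amenable, so that $d^\PP(U)=\|\Gamma_U\|$ for every object $U$; meanwhile Proposition~\ref{pminindex} expresses $d^\PP(U)$ as the square root of the minimal index $[\vNN_U\colon\vNN]_0$. The strategy is to feed the distinguished object $U=L^2(M)$ through both results and then translate the resulting von Neumann algebraic quantities into the subfactor invariants via the identifications established earlier in this subsection.

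First I would verify that the standing hypotheses of Theorem~\ref{tminimal} and Proposition~\ref{pminindex} are met. Since $L^2(M)$ contains a copy of the unit object, we have $\mu_\st(e)>0$, which (as remarked after Proposition~\ref{prop:p-bdry-as-rel-comm-LR}) guarantees that $\mu_\st$ satisfies the assumption of that proposition; the generating property follows from the increasing supports; and ergodicity of $\mu_\st$ is exactly the hypothesis that $N^\st$ is a factor, by the discussion preceding Proposition~\ref{pIzumi} (or Corollary~\ref{cor:ergod-factor}). With these in place, applying the chain of equalities to $U=L^2(M)$ gives
$$
\|\Gamma_{L^2(M)}\|=d^\PP(L^2(M))=[\vNN_{L^2(M)}\colon\vNN]_0^{1/2}.
$$

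Next I would carry out the two translations. On the index side, the identification of $\vNN_{L^2(M)}$ with $M_1^\st$ and of $\vNN$ with $N^\st$, established earlier in this subsection (using the modification from Remark~\ref{rmultiplicity}), turns $[\vNN_{L^2(M)}\colon\vNN]_0$ into $[M_1^\st\colon N^\st]_0$. On the graph side, I would use the already-recorded identity $\Gamma_{L^2(M)}=\Gamma_{M,M_1}\Gamma_{M,M_1}^t$, so that $\|\Gamma_{L^2(M)}\|=\|\Gamma_{M,M_1}\|^2$, together with the equality $\|\Gamma_{N,M}\|=\|\Gamma_{M,M_1}\|$ from~\cite{MR1278111}*{Section~1.3.5}, to obtain $\|\Gamma_{L^2(M)}\|=\|\Gamma_{N,M}\|^2$. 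Squaring the displayed chain of equalities and substituting both translations then yields
$$
\|\Gamma_{N,M}\|^4=\|\Gamma_{L^2(M)}\|^2=[\vNN_{L^2(M)}\colon\vNN]_0=[M_1^\st\colon N^\st]_0,
$$
which is the assertion.

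The substantive content of the argument is entirely borrowed from the machinery already built: Theorem~\ref{tminimal} is the deep input (amenability of the minimal dimension function), and Proposition~\ref{pminindex} supplies the index formula. Consequently the only real obstacle is bookkeeping rather than mathematics, namely checking carefully that the hypotheses of those two results genuinely hold for $(\BB_N(M),\mu_\st)$ with $U=L^2(M)$, and that the multiplicity subtlety (the algebra $N'\cap M_1$ need not be abelian) is correctly handled by the Remark~\ref{rmultiplicity} version of the construction. I would state these verifications explicitly but keep them brief, since they are routine consequences of the identifications made just before Proposition~\ref{pIzumi}.
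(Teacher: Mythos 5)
Your proposal is correct and follows essentially the same route as the paper, which obtains Theorem~\ref{tPopaext} precisely by applying Theorem~\ref{tminimal} and Proposition~\ref{pminindex} to $U=L^2(M)$ and translating via the identifications $\vNN_{L^2(M)}\cong M_1^\st$, $\vNN\cong N^\st$ (valid in the extremal case), $\Gamma_{L^2(M)}=\Gamma_{M,M_1}\Gamma_{M,M_1}^t$, and $\|\Gamma_{N,M}\|=\|\Gamma_{M,M_1}\|$. Your hypothesis checks (ergodicity of $\mu_\st$ equivalent to factoriality of $N^\st$, $\mu_\st(e)>0$, the Remark~\ref{rmultiplicity} modification) are exactly the points the paper flags.
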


If $M^\st$ is also a factor, this can of course be formulated as
$\|\Gamma_{N,M}\|^2=[M^\st : N^\st]_0$.

\smallskip

Applying Theorem~\ref{tFKVR} we get the following result, which recovers part of Popa's characterization of extremal subfactors with strongly amenable standard invariant~\cite{MR1278111}*{Theorem~5.3.1}.

\begin{theorem}\label{thm:amen-equiv-high-rel-comm-std-mdl}
  Assume $N \subset M$ is extremal. The following conditions are
  equivalent:
\begin{enumerate}
\item $N^\st$ is a factor and $\|\Gamma_{N,M}\|^2=[M : N]_0$;
\item $(M^\st_{-2n+1})'\cap M_1^\st=M_{-2n+1}'\cap M_1$ for all
$n\ge1$.
\end{enumerate}
\end{theorem}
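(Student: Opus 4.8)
The plan is to show that each of conditions (i) and (ii) is equivalent to triviality of the Poisson boundary $\Pi\colon\CC\to\PP$ of $(\CC,\mu_\st)$, where I write $\CC=\BB_N(M)$. Throughout I would use the identifications established above: $\vNN\cong N^\st$, $\vNN_{L^2(M)}\cong M_1^\st$, and more generally $\End{\PP}{L^2(M)^{\otimes_Nn}}\cong(M_{-2n+1}^\st)'\cap M_1^\st$, together with $\End{\CC}{L^2(M)^{\otimes_Nn}}=M_{-2n+1}'\cap M_1$. Since $\mu_\st(e)>0$ and $L^2(M)$ generates $\CC$, the measure $\mu_\st$ is generating and satisfies the assumption of Proposition~\ref{prop:p-bdry-as-rel-comm-LR}, so these relative commutants genuinely describe all the morphisms of $\PP$; and by Corollary~\ref{cor:ergod-factor} the algebra $N^\st\cong\vNN$ is a factor if and only if $\mu_\st$ is ergodic.

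For the equivalence of (ii) with triviality I would argue as follows. The functor $\Pi$ is faithful, and by definition the Poisson boundary is trivial precisely when $\Pi$ is an equivalence, that is, when $\End{\CC}{U}=\End{\PP}{U}$ for every object $U$. Because $L^2(M)$ generates $\CC$ and contains the unit object (again as $\mu_\st(e)>0$), every object is a subobject of some $L^2(M)^{\otimes_Nn}$, so this condition may be tested on the objects $L^2(M)^{\otimes_Nn}$ alone. Under the identifications above, the equality $\End{\CC}{L^2(M)^{\otimes_Nn}}=\End{\PP}{L^2(M)^{\otimes_Nn}}$ reads exactly as $M_{-2n+1}'\cap M_1=(M_{-2n+1}^\st)'\cap M_1^\st$. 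Hence (ii) holds if and only if the Poisson boundary is trivial.

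For the equivalence of (i) with triviality, I first observe that, since (i) explicitly and triviality via simplicity of the unit both force $\mu_\st$ to be ergodic, I may assume ergodicity; then $\CC$ is weakly amenable by Proposition~\ref{pweakamen} and the machinery of the universal property applies. By Theorems~\ref{tuniversal1} and~\ref{tminimal} the function $d^\PP\Pi=\dmin^\CC$ is the amenable dimension function, so $d^\PP(L^2(M))=\|\Gamma_{L^2(M)}\|$; using $\Gamma_{L^2(M)}=\Gamma_{M,M_1}\Gamma_{M,M_1}^t$ and $\|\Gamma_{N,M}\|=\|\Gamma_{M,M_1}\|$ this equals $\|\Gamma_{N,M}\|^2$, while $d^\CC(L^2(M))=d(L^2(M))=[M:N]_0$. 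Thus the numerical part of (i) says precisely $d^\PP(L^2(M))=d^\CC(L^2(M))$, and (i) as a whole asserts that $\mu_\st$ is ergodic and this single equality of dimensions holds.

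The key step is to upgrade this equality on the one generator to amenability of $\CC$. Here I would use that $d^\CC$ and $d^\PP(=d^\PP\Pi)$ are both ring homomorphisms $\Z[\Irr(\CC)]\to\R$ with $d^\PP\le d^\CC$, agreeing on $[L^2(M)]$. Expanding $[L^2(M)]^n=\sum_s m_s^{(n)}[U_s]$ with $m_s^{(n)}\ge0$ gives $\sum_s m_s^{(n)}\bigl(d^\CC(U_s)-d^\PP(U_s)\bigr)=0$, a sum of nonnegative terms; since $L^2(M)$ generates, for each $s$ some $m_s^{(n)}>0$, forcing $d^\CC(U_s)=d^\PP(U_s)$ for all $s$, i.e.\ $\CC$ is amenable (the converse implication, amenability $\Rightarrow$ the equality, is immediate). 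Finally, by Theorem~\ref{tFKVR} amenability of $\CC$ together with ergodicity of $\mu_\st$ is equivalent to triviality of the Poisson boundary: triviality gives a simple unit, hence ergodicity and $N^\st$ a factor, and gives amenability, while an amenable category has trivial boundary for every ergodic measure. Combining the three arguments yields (i)$\Leftrightarrow$triviality$\Leftrightarrow$(ii). I expect this Kesten-type upgrade, deducing global amenability from equality of the two dimension functions on the single generating object, to be the only nonformal ingredient; everything else is bookkeeping with the relative-commutant identifications already in place.
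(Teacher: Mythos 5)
Your proof is correct and follows essentially the same route as the paper's: both reduce (ii) to triviality of the Poisson boundary via Proposition~\ref{pIzumi} and its extension to the tensor powers $L^2(M)^{\otimes_N n}$, reduce (i) to ergodicity of $\mu_\st$ together with amenability of $\BB_N(M)$, and conclude by Theorem~\ref{tFKVR}. The only real difference is that where the paper simply asserts the Kesten-type equivalence of $\|\Gamma_{L^2(M)}\|=d(L^2(M))$ with amenability (a standard fact for a self-dual generating object), you derive it in the ergodic case — which is all that is needed here — from the fact that $d^\PP\Pi$ is the amenable dimension function; this is a valid, self-contained substitute.
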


\bp As we already observed, the condition that $N^\st$ is a factor in (i) means exactly that the measure $\mu_\st$ is ergodic. The condition $\|\Gamma_{N,M}\|^2=[M : N]_0$ means that $\|\Gamma_{L^2(M)}\|=d(L^2(M))$. Since the module $L^2(M)$ is self-dual and generates $\BB_N(M)$, this condition is equivalent to amenability of $\BB_N(M)$.

On the other hand, by Proposition~\ref{pIzumi} and its extension to
the modules $L^2(M)^{\otimes_Nn}$ discussed above, condition (ii) is
equivalent to triviality of the Poisson boundary of
$(\BB_N(M),\mu_\st)$.

This shows that the equivalence of (i) and (ii) is indeed a
consequence of Theorem~\ref{tFKVR}.  \ep

If we write the proof of the implication (ii)$\Rightarrow$(i) in terms
of the algebras $M_{-2n+1}'\cap M_1$ instead of
$\Endd_\NN(L^2(M)^{\otimes_Nn})$, we get an argument similar to Popa's
proof based on~\cite{MR1111570}, which was our inspiration. On the
other hand, our proof of (i)$\Rightarrow$(ii) seems to be very
different from his arguments.

\smallskip

Next, let us comment on the nonextremal case. One possibility is to consider the completion of $\bigcup_{n\ge1}(M_{j-n}'\cap M_j)$ with respect to the trace induced by the minimal conditional expectation (that is, the categorical trace) instead of $\tau$. Then all the above statements continue to hold if we replace $N^\st$ and $M^\st$ by the corresponding new von Neumann
algebras. Note that the inclusion $N^\st\subset M^\st$ defined this way is the standard model in the
conventions of~\cite{MR1339767}. Then, for example, the implication (i)$\Rightarrow$(ii) in Theorem~\ref{thm:amen-equiv-high-rel-comm-std-mdl}
corresponds to~\cite{MR1339767}*{Lemma~5.2}.

But some results, notably~Theorem~\ref{tPopaext}, continue to hold for the
inclusion $N^\st \subset M^\st$ defined with respect to $\tau$ in the
nonextremal case also. The proof goes in basically the same way as in the extremal case, by noting that the proof of the inequality $\|\Gamma_U\|^2\ge [\vNN_U :\vNN]_0$ in Theorem~\ref{tminimal} did not depend on how exactly the inductive limit of the algebras $\CC(V^{\otimes n}\otimes U)$ was completed to
get the factors $\vNN\subset\vNN_U$. Therefore we have
$$
\|\Gamma_{N,M}\|^4\ge [M_1^\st : N^\st]_0.
$$
The opposite inequality can be proved either by realizing that the
dimension function on $\BB_{N^\st}(M_1^\st)$ defines a dimension
function on $\BB_N(M_1)$, or by the following string of
(in)equalities:
$$
\|\Gamma_{N,M}\|^4=\|\Gamma_{N,M_1}\|^2\le
\|\Gamma_{N^\st,M_1^\st}\|^2 \le [M_1^\st : N^\st]_0,
$$
compare with~\cite{MR1278111}*{p.~235}. We remark that from this one
can easily obtain the implication (ii)$\Rightarrow$(vii)
in~\cite{MR1278111}*{Theorem~5.3.2} promised in~\cite{MR1278111}.

\bigskip

\appendix

\section{Estimating relative entropy}

In this appendix we estimate the relative entropy for
embeddings of finite dimensional C$^*$-algebras.

\smallskip

Let $N\subset M$ be a unital inclusion of finite dimensional
C$^*$-algebras, $\{z_k\}_{k\in K}$ be the minimal central projections
of $N$, and $\{w_l\}_{l\in L}$ be the minimal central projections of
$M$. Let $A=(a_{kl})_{k,l}$ be the multiplicity matrix of the
inclusion $N\subset M$, so that $a_{kl}=0$ if $z_kw_l=0$ and
$(Nz_kw_l)'\cap (z_kMw_lz_k)\cong\Mat_{a_{kl}}(\C)$ otherwise, and
$\{n_k\}_k$ be the dimension function of $N$, so
$Nz_k\cong\Mat_{n_k}(\C)$. The following proposition generalizes
results of Pimsner and Popa for tracial states in \cite
{MR860811}*{Section~6}.

\begin{proposition}\label{ppipo1} For any state $\varphi$ on $M$ we
have
$$
H_\varphi(M|N)\le\sum_{k,l}\varphi(z_kw_l)\log\frac{\varphi(z_k)\varphi(w_l)a_{kl}\min\{a_{kl},n_k\}}{\varphi(z_kw_l)^2},
$$
and the equality holds if $\varphi$ is tracial.
\end{proposition}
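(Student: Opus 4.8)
The plan is to work directly from the finite-dimensional expression
$$H_\varphi(M|N)=S(\varphi)-S(\varphi|_N)+\sup_{(\varphi_i)_i}\sum_i\bigl(S(\varphi_i|_N)-S(\varphi_i)\bigr)$$
stated above, and to estimate the two parts separately, keeping track only of the numbers $p_{kl}=\varphi(z_kw_l)$, $p_k=\varphi(z_k)$, $p_l=\varphi(w_l)$. Throughout I write $H$ for classical Shannon entropy and $S$ for von Neumann entropy, and I use the block pictures $Mw_l\cong\Mat_{m_l}(\C)$ together with the orthogonal decomposition of the underlying space, under $Nw_l$, into summands $\C^{n_k}\otimes\C^{a_{kl}}$ on which $Nz_kw_l$ acts as $\Mat_{n_k}(\C)\otimes1$.

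For the supremum I would first reduce to decompositions into rank-one functionals, $\varphi_i$ having density $c_iP_i$ with $P_i$ a rank-one projection supported in a single block $l(i)$ and associated unit vector $\xi_i=\oplus_k\xi_i^{(k)}$. A direct computation with partial traces then gives the exact identity
$$S(\varphi_i|_N)-S(\varphi_i)=c_i\,H\bigl(\{q_i^{(k)}\}_k\bigr)+\sum_k c_iq_i^{(k)}\,S(\hat\sigma_i^{(k)}),$$
where $q_i^{(k)}=\|\xi_i^{(k)}\|^2$ and $\hat\sigma_i^{(k)}$ is the normalized reduction of $\xi_i^{(k)}$ over the factor $\C^{a_{kl(i)}}$. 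The Schmidt-rank bound $S(\hat\sigma_i^{(k)})\le\log\min\{a_{kl(i)},n_k\}$, the constraint $\sum_{i:l(i)=l}c_iq_i^{(k)}=p_{kl}$, and concavity of $H$ then yield
$$\sup\nolimits_{(\varphi_i)}\sum_i\bigl(S(\varphi_i|_N)-S(\varphi_i)\bigr)\le\sum_l p_l\,H\bigl(\{p_{kl}/p_l\}_k\bigr)+\sum_{k,l}p_{kl}\log\min\{a_{kl},n_k\}.$$

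The harder part is the constant term $S(\varphi)-S(\varphi|_N)$, because the individual block entropies depend on all of $\varphi$ and not just on the numbers $p_{kl}$; this is where I expect the main obstacle. Writing the central compressions as $p_l\omega_l$ on $Mw_l$ and $p_k\rho_k$ on $Nz_k$, I would first use that pinching to the $N$-central blocks increases entropy, $S(\omega_l)\le H(\{p_{kl}/p_l\}_k)+\sum_k(p_{kl}/p_l)S(\hat\Omega_l^{(k)})$, with $\hat\Omega_l^{(k)}$ the normalized block of $\omega_l$ on $\C^{n_k}\otimes\C^{a_{kl}}$. The point is then to bound $\sum_{k,l}p_{kl}S(\hat\Omega_l^{(k)})-\sum_kp_kS(\rho_k)$: subadditivity of von Neumann entropy gives $S(\hat\Omega_l^{(k)})\le S(\tau_{kl})+\log a_{kl}$ for the marginal $\tau_{kl}$ on $\C^{n_k}$, while $\rho_k=\sum_l(p_{kl}/p_k)\tau_{kl}$ together with concavity gives $\sum_l(p_{kl}/p_k)S(\tau_{kl})\le S(\rho_k)$. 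Combining these two estimates controls the difference by exactly $\sum_{k,l}p_{kl}\log a_{kl}$, which is the slack needed.

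Assembling the pieces, the $S(\rho_k)$-terms cancel, the Shannon contributions combine via the conditional-entropy identity $\sum_l p_l H(\{p_{kl}/p_l\}_k)=H(\{p_{kl}\})-H(\{p_l\})$, and the total collapses to $-H(\{p_k\})-H(\{p_l\})+2H(\{p_{kl}\})+\sum_{k,l}p_{kl}\log a_{kl}+\sum_{k,l}p_{kl}\log\min\{a_{kl},n_k\}$, which is precisely the asserted right-hand side. For the equality statement I would observe that when $\varphi$ is tracial every $\omega_l$, $\hat\Omega_l^{(k)}$, $\tau_{kl}$, $\rho_k$ is a normalized trace: the pinching is trivial since the densities are scalar, subadditivity is saturated because the bipartite state is a product of maximally mixed marginals, and the concavity step is an equality because all $\tau_{kl}$ coincide; choosing the vectors $\xi_i$ maximally entangled in each block realizes $S(\hat\sigma_i^{(k)})=\log\min\{a_{kl},n_k\}$ and turns the supremum estimate into an equality as well, so all inequalities are tight.
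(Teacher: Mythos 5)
Your argument is correct and runs essentially parallel to the paper's. The paper isolates the key estimate as Lemma~\ref{lentropy}, a two-sided bound on $S(\psi)-S(\psi|_N)$ valid for an \emph{arbitrary} positive functional $\psi$: its upper bound is derived exactly as in your treatment of the constant term (pinching to the $N$-central blocks, the bound $|S(\psi_{kl})-S(\psi_{kl}|_{N_{kl}})|\le\log a_{kl}$, concavity of von Neumann entropy), while its lower bound, applied to each $\varphi_i$ of an arbitrary decomposition and then combined via concavity of $\log$, plays the role of your rank-one analysis. Your variant for the supremum term --- refining to rank-one functionals, computing $S(\varphi_i|_N)-S(\varphi_i)$ exactly via the Schmidt decomposition, and bounding by $\log\min\{a_{kl},n_k\}$ --- is a legitimate and rather transparent alternative, but it carries one obligation the paper's route avoids: you must justify that restricting to rank-one decompositions does not change the supremum, i.e.\ that $\sum_i\bigl(S(\varphi_i|_N)-S(\varphi_i)\bigr)$ is nondecreasing under refinement. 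This is standard (write $S(\psi|_N)-S(\psi)=S(\psi,\psi E)$ for the $\Tr$-preserving conditional expectation $E\colon M\to N$ and use joint convexity of relative entropy), but it should be stated. Finally, for the tracial equality the paper simply cites Pimsner--Popa; your sketch correctly notes that all inequalities in the constant term are saturated, but the substantive point on the supremum side is the \emph{existence}, in each block $Mw_l$, of a rank-one resolution of the scalar density into unit vectors carrying weight $n_ka_{kl}/m_l$ on each summand $\C^{n_k}\otimes\C^{a_{kl}}$ with maximally entangled components; ``choose the vectors maximally entangled'' names the optimizer without constructing it, so either supply that construction or defer to Pimsner--Popa as the paper does.
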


The proof follows closely the proof for tracial states given in
\cite{MR2251116}*{Theorem~10.1.4}. The key part is the following
estimate.

\begin{lemma}\label{lentropy} For any positive linear functional
$\psi$ on $M$ we have
$$
-\sum_{k,l}\psi(z_kw_l)\log\frac{\psi(w_l)\min\{a_{kl},n_k\}}{\psi(z_kw_l)}\le
S(\psi)-S(\psi|_N)\le\sum_{k,l}\psi(z_kw_l)\log\frac{\psi(z_k)a_{kl}}{\psi(z_kw_l)}.
$$
\end{lemma}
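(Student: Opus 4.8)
The plan is to prove the two inequalities separately after two reductions. First, both sides are homogeneous of degree one in $\psi$: under $\psi\mapsto t\psi$ the two $-t\log t\,\psi(1)$ contributions to $S(\psi)$ and $S(\psi|_N)$ cancel (since $\psi|_N(1)=\psi(1)$), while each summand on the right is linear because the factors $\psi(\cdot)$ inside the logarithms occur only through scale-invariant ratios. So I would assume $\psi$ is a state. Second, I would set up block notation $p_{kl}=\psi(z_kw_l)$, $p_k=\psi(z_k)$ and use that $Z(N)$ commutes with $Z(M)$, so the $z_kw_l$ form a partition of unity and each corner $z_kw_lMw_lz_k\cong\Mat_{n_k}(\C)\otimes\Mat_{a_{kl}}(\C)$ with $Nz_kw_l=\Mat_{n_k}(\C)\otimes 1$. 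Writing $Q=Q_\psi$ and $Q_{kl}=z_kw_lQz_kw_l$, the partial trace of $Q_{kl}$ over the multiplicity factor $\Mat_{a_{kl}}(\C)$ is the density of $\psi|_{Nz_kw_l}$, and summing over $l$ recovers the density $Q_N$ of $\psi|_N$.

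For the upper bound I would use positivity of relative entropy: $S(\psi,\sigma)\ge0$ for a state $\sigma$ with density $\rho_\sigma$ gives $S(\psi)\le-\Tr(Q\log\rho_\sigma)$. The idea is to choose $\sigma$ so this is exactly $S(\psi|_N)$ plus the target. I would take the block-diagonal density $\rho_\sigma=\bigoplus_{k,l}p_{kl}\,\bar\rho_k\otimes a_{kl}^{-1}1_{a_{kl}}$, where $\bar\rho_k$ is the normalized density of $\psi|_{Nz_k}$ and the multiplicity factor carries the maximally mixed state; this is a state since $\Tr(\bar\rho_k\otimes a_{kl}^{-1}1)=1$ and $\sum_{k,l}p_{kl}=1$, and one checks $\supp Q\subseteq\supp\rho_\sigma$ so the relative entropy is finite. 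As $\log\rho_\sigma$ is block-diagonal, $\Tr(Q\log\rho_\sigma)=\sum_{k,l}\Tr(Q_{kl}\log\rho_\sigma^{(k,l)})$; splitting $\log\rho_\sigma^{(k,l)}$ into the scalar $\log(p_{kl}/a_{kl})$ and the $N$-part $\log\bar\rho_k\otimes1$ and using the partial-trace identities above, the $N$-part collapses to $-S(\psi|_N)-\sum_kp_k\log p_k$ and the scalar part to $\sum_{k,l}p_{kl}\log(a_{kl}/p_{kl})$. Collecting terms yields $-\Tr(Q\log\rho_\sigma)=S(\psi|_N)+\sum_{k,l}p_{kl}\log\frac{p_ka_{kl}}{p_{kl}}$, which is the right-hand inequality.

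The lower bound, equivalently $S(\psi|_N)-S(\psi)\le\sum_{k,l}p_{kl}\log\frac{\psi(w_l)\min\{a_{kl},n_k\}}{p_{kl}}$, is the harder half, and is where the sharp constant must be earned. Because the $w_l$ are central in $M$ but the $z_k$ are not, I would first decompose $S(\psi)=H(\{\psi(w_l)\})+\sum_l\psi(w_l)S(\psi_l)$ over $Z(M)$, with $\psi_l$ the normalized state on the factor $Mw_l$; doing this \emph{before} touching the $z_k$ is precisely what avoids losing a spurious $H(\{\psi(w_l)\})$. Inside $Mw_l$ I would pinch $\psi_l$ by $\{z_kw_l\}_k$: the isometric dilation $V=\sum_kz_kw_l\otimes|k\rangle$ together with subadditivity shows the entropy increase under this pinching is at most the Shannon entropy of the block weights $q_{k|l}=p_{kl}/\psi(w_l)$, which then cancels the term the pinching produces. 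On each block the Araki--Lieb inequality $S(\hat\rho_{kl})\ge S(\rho^A_{kl})-S(\rho^B_{kl})$ with the dimension bounds $S(\rho^B_{kl})\le\log a_{kl}$ and $S(\rho^A_{kl})\le\log n_k$ gives $S(\rho^A_{kl})\le S(\hat\rho_{kl})+\log\min\{a_{kl},n_k\}$, and assembling these yields the per-factor estimate $\sum_kq_{k|l}S(\rho^A_{kl})\le S(\psi_l)+\sum_kq_{k|l}\log\min\{a_{kl},n_k\}$.

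Finally I would weight these by $\psi(w_l)$, sum over $l$, and feed the result into $S(\psi|_N)=H(\{p_k\})+\sum_kp_kS(\rho^A_k)$ via the mixture bound $S(\rho^A_k)\le\sum_l(p_{kl}/p_k)S(\rho^A_{kl})+H(\{p_{kl}/p_k\}_l)$ (using $\rho^A_k=\sum_l(p_{kl}/p_k)\rho^A_{kl}$), the chain rule $\sum_kp_kH(\{p_{kl}/p_k\}_l)=H(\{p_{kl}\})-H(\{p_k\})$, and $\sum_l\psi(w_l)\log\psi(w_l)=-H(\{\psi(w_l)\})$. After the $H(\{p_k\})$ terms cancel one lands exactly on the stated bound. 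I expect the main obstacle to be bookkeeping three distinct entropy scales — the classical block entropies, the per-factor von Neumann entropies, and the Araki--Lieb defect — so that the Shannon terms cancel precisely and produce $\psi(w_l)$ rather than $1$ inside the logarithm; the structural point that makes this succeed is performing the central $Z(M)$-reduction first and only then pinching by the non-central $z_k$.
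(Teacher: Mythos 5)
Your proof is correct. The lower bound is essentially the paper's own argument: you decompose $S(\psi)$ over $Z(M)$ first, bound the entropy increase under pinching by the $z_kw_l$ by the Shannon entropy of the block weights, apply the dimension bound $\log\min\{a_{kl},n_k\}$ on each corner via Araki--Lieb plus the trivial bound $S(\rho^A_{kl})\le\log n_k$, and control $S(\psi|_N)$ by concavity --- these are exactly the paper's estimates \eqref{ea1}, \eqref{ea3} and $S(\psi_{kl})-S(\psi_{kl}|_{N_{kl}})\ge-\log\min\{a_{kl},n_k\}$. (Minor point: the pinching bound you use is the Araki--Lieb triangle inequality applied to the isometric dilation, not subadditivity, which gives the opposite direction; the paper cites \cite{MR2251116}*{Lemma~2.2.4} for the same fact.) Your upper bound, however, takes a genuinely different route: a single application of positivity of relative entropy against the reference state with density $\bigoplus_{k,l}\psi(z_kw_l)\,\bar\rho_k\otimes a_{kl}^{-1}1$, which indeed evaluates to $S(\psi|_N)+\sum_{k,l}\psi(z_kw_l)\log\bigl(\psi(z_k)a_{kl}/\psi(z_kw_l)\bigr)$ once the support inclusion $\supp Q\subseteq\supp\rho_\sigma$ and the partial-trace identities are checked (they do hold, since $\supp Q_{kl}\subseteq\supp(\mathrm{Tr}_B Q_{kl})\otimes 1$). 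The paper instead combines three separate estimates: subadditivity of entropy over the full-rank subalgebra $\oplus_kM_{kl}\subset Mw_l$, the two-sided defect bound $|S(\psi_{kl})-S(\psi_{kl}|_{N_{kl}})|\le\log a_{kl}$ from \cite{MR2251116}*{Theorem~2.2.2}, and concavity for $S(\psi|_N)$. Your variational choice packages all of this into one inequality and is arguably cleaner for the upper bound alone; the paper's block-by-block bounds have the advantage of exhibiting where each inequality can be saturated, which is convenient for the equality statement for tracial states in Proposition~\ref{ppipo1}.
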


\bp Put $M_{kl}=z_kMw_lz_k$ and $N_{kl}=Nz_kw_l$. For
$\psi(z_kw_l)\ne0$ consider the state
$$
\psi_{kl}=\psi(z_kw_l)^{-1}\psi|_{M_{kl}}
$$
on $M_{kl}$.  As usual in entropy theory, it is
convenient to define a function $\eta$ by $\eta(t)=-t\log t$ for
$t\ge0$. We will constantly use the obvious equality
$$
S(\omega)=\omega(1)S(\omega(1)^{-1}\omega)+\eta(\omega(1))
$$
for positive linear functionals $\omega$. Recall that the von Neumann entropy is defined by $S(\omega)=\Tr(\eta(Q_\omega))$.

\smallskip

Let us start by estimating $S(\psi)$. We have
$$
S(\psi)=\sum_lS(\psi|_{Mw_l})=\sum_l\psi(w_l)S(\psi(w_l)^{-1}\psi|_{Mw_l})+\sum_l\eta(\psi(w_l)).
$$
By \cite{MR2251116}*{Lemma~2.2.4} applied to the projections $z_kw_l$
in $Mw_l$ we have
$$
S(\psi(w_l)^{-1}\psi|_{Mw_l})\ge\sum_k
S(\psi(w_l)^{-1}\psi|_{M_{kl}})-\sum_k\eta\left(\frac{\psi(z_kw_l)}{\psi(w_l)}\right)
=\sum_k\frac{\psi(z_kw_l)}{\psi(w_l)}S(\psi_{kl}).
$$
It follows that
\begin{equation}\label{ea1} S(\psi)\ge
\sum_{k,l}\psi(z_kw_l)S(\psi_{kl})+\sum_l\eta(\psi(w_l)).
\end{equation}

On the other hand, since $\bigoplus_kM_{kl}$ is a subalgebra of $Mw_l$ of
full rank, by \cite{MR2251116}*{Theorem~2.2.2(vii)} we have
$$
S(\psi|_{Mw_l})\le\sum_k
S(\psi|_{M_{kl}})=\sum_k\psi(z_kw_l)S(\psi_{kl})+\sum_k\eta(\psi(z_kw_l)).
$$
Therefore
\begin{equation}\label{ea2} S(\psi)\le
\sum_{k,l}\psi(z_kw_l)S(\psi_{kl})+\sum_{k,l}\eta(\psi(z_kw_l)).
\end{equation}

\smallskip

Turning to $S(\psi|_N)$, by \cite{MR2251116}*{Theorem~2.2.2(ii)} we
have
\begin{equation} \label{ea3}
S(\psi|_N)=\sum_kS(\psi|_{Nz_k})\le\sum_{k,l}S(\psi(\cdot\,
w_l)|_{Nz_k})
=\sum_{k,l}\psi(z_kw_l)S(\psi_{kl}|_{N_{kl}})+\sum_{k,l}\eta(\psi(z_kw_l)).
\end{equation}

On the other hand, since the von Neumann entropy is concave by
\cite{MR2251116}*{Theorem~2.2.2(ii)}, we have
\begin{align*}
S(\psi|_{Nz_k})&=\psi(z_k)S\left(\sum_l\frac{\psi(z_kw_l)}{\psi(z_k)}\psi(z_kw_l)^{-1}\psi(\cdot\,w_l)|_{Nz_k}\right)
+\eta(\psi(z_k))\\ &\ge
\sum_l\psi(z_kw_l)S(\psi_{kl}|_{N_{kl}})+\eta(\psi(z_k)).
\end{align*} Therefore
\begin{equation} \label{ea4}
S(\psi|_N)\ge\sum_{k,l}\psi(z_kw_l)S(\psi_{kl}|_{N_{kl}})+\sum_k\eta(\psi(z_k)).
\end{equation}

\smallskip

Now, by \cite{MR2251116}*{Theorem~2.2.2(vi)} we have
$$
|S(\psi_{kl})-S(\psi_{kl}|_{N_{kl}})|\le \log a_{kl}.
$$
This, together with \eqref{ea2} and \eqref{ea4}, gives
$$
S(\psi)-S(\psi|_N)\le \sum_{k,l}\psi(z_kw_l)\log
a_{kl}+\sum_{k,l}\eta(\psi(z_kw_l))- \sum_k\eta(\psi(z_k)),
$$
which is what we need as
$$
\eta(\psi(z_k))=-\sum_l\psi(z_kw_l)\log\psi(z_k).
$$

The lower bound for $S(\psi)-S(\psi|_N)$ follows similarly from
\eqref{ea1} and \eqref{ea3}, if we in addition use that
$$
S(\psi_{kl})-S(\psi_{kl}|_{N_{kl}})\ge -S(\psi_{kl}|_{N_{kl}})\ge-\log
n_k,
$$
so that
$$
S(\psi_{kl})-S(\psi_{kl}|_{N_{kl}})\ge -\log\min\{a_{kl},n_k\}.
$$
\ep

\bp[Proof of Proposition~\ref{ppipo1}] Given a finite decomposition
$\varphi=\sum_i\varphi_i$ we want to obtain an upper bound~on
$$
S(\varphi)-S(\varphi|_N)+\sum_i(S(\varphi_i|_N)-S(\varphi_i)).
$$
By Lemma~\ref{lentropy} we have
$$
S(\varphi)-S(\varphi|_N)\le
\sum_{k,l}\varphi(z_kw_l)\log\frac{\varphi(z_k)a_{kl}}{\varphi(z_kw_l)}
$$
and
$$
\sum_i(S(\varphi_i|_N)-S(\varphi_i))\le
\sum_{i,k,l}\varphi_i(z_kw_l)\log\frac{\varphi_i(w_l)\min\{a_{kl},n_k\}}{\varphi_i(z_kw_l)}.
$$
Since $\sum_i\varphi_i(z_kw_l)=\varphi(z_kw_l)$, using the concavity of
$\log$ we get
\begin{align*}
\sum_{i}\varphi_i(z_kw_l)\log\frac{\varphi_i(w_l)\min\{a_{kl},n_k\}}{\varphi_i(z_kw_l)}
&\le
\varphi(z_kw_l)\log\left(\sum_{i}\frac{\varphi_i(z_kw_l)}{\varphi(z_kw_l)}
\frac{\varphi_i(w_l)\min\{a_{kl},n_k\}}{\varphi_i(z_kw_l)}\right)\\
&=\varphi(z_kw_l)\log\frac{\varphi(w_l)\min\{a_{kl},n_k\}}{\varphi(z_kw_l)}.
\end{align*} Putting all this together we get the required upper bound
on $H_\varphi(M|N)$. That this bound is exactly the value of
$H_\varphi(M|N)$ for tracial $\varphi$ is proved in
\cite{MR860811}*{Section~6}, see also
\cite{MR2251116}*{Theorem~10.1.4}.  \ep

The following result generalizes another estimate of Pimsner and Popa
for tracial states, given in~\cite{MR1111570}*{Theorem~2.6}.

\begin{proposition}\label{prop:stt-rel-entropy-graph-norm} For any
state $\varphi$ on $M$ we have
$$
H_\varphi(M|N)\le2\log\|A\|.
$$
\end{proposition}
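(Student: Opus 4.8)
The plan is to feed the pointwise estimate of Proposition~\ref{ppipo1} into Jensen's inequality and then recognize the resulting expression as a bilinear form governed by the operator norm of $A$. First I would weaken the bound of Proposition~\ref{ppipo1} using $\min\{a_{kl},n_k\}\le a_{kl}$, which gives
$$
H_\varphi(M|N)\le\sum_{k,l}\varphi(z_kw_l)\log\frac{\varphi(z_k)\varphi(w_l)a_{kl}^2}{\varphi(z_kw_l)^2}.
$$
Here the terms with $\varphi(z_kw_l)=0$ contribute nothing under the usual convention $\eta(0)=0$, and whenever $\varphi(z_kw_l)>0$ we have $z_kw_l\ne0$, hence $a_{kl}\ge1$, so every logarithm that survives is well defined.

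The key point is that the argument of the logarithm is a perfect square, so I can pull out a factor of $2$:
$$
\sum_{k,l}\varphi(z_kw_l)\log\frac{\varphi(z_k)\varphi(w_l)a_{kl}^2}{\varphi(z_kw_l)^2}=2\sum_{k,l}\varphi(z_kw_l)\log\frac{a_{kl}\sqrt{\varphi(z_k)\varphi(w_l)}}{\varphi(z_kw_l)}.
$$
Since $\varphi$ is a state, the quantities $\varphi(z_kw_l)$ are nonnegative and sum to $\varphi(1)=1$, so they form a probability distribution on the index pairs. Applying Jensen's inequality to the concave function $\log$, summing only over the pairs with $\varphi(z_kw_l)>0$, I obtain
$$
\sum_{k,l}\varphi(z_kw_l)\log\frac{a_{kl}\sqrt{\varphi(z_k)\varphi(w_l)}}{\varphi(z_kw_l)}\le\log\left(\sum_{k,l}a_{kl}\sqrt{\varphi(z_k)\varphi(w_l)}\right).
$$

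It then remains only to bound the inner sum by $\|A\|$. For this I would introduce the vectors $\xi=(\sqrt{\varphi(z_k)})_k$ and $\zeta=(\sqrt{\varphi(w_l)})_l$; these are unit vectors because $\sum_k\varphi(z_k)=\sum_l\varphi(w_l)=\varphi(1)=1$. Then
$$
\sum_{k,l}a_{kl}\sqrt{\varphi(z_k)\varphi(w_l)}=\langle A\zeta,\xi\rangle\le\|A\|\,\|\zeta\|\,\|\xi\|=\|A\|,
$$
and combining the three displays yields $H_\varphi(M|N)\le2\log\|A\|$. The argument is short and the substance already lies in Proposition~\ref{ppipo1}; I do not expect a genuine obstacle. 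The only thing requiring care is the indexing bookkeeping at the pairs where $\varphi(z_kw_l)=0$, together with the observation that it is precisely the square root inside the logarithm—produced by the perfect-square regrouping—that converts the Jensen bound into the Cauchy--Schwarz estimate $\langle A\zeta,\xi\rangle\le\|A\|$.
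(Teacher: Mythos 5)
Your proof is correct, and it takes a genuinely different and more elementary route than the paper. The paper bounds $H_\varphi(M|N)$ by the same quantity
$f(\xi)=\sum_{k,l}\xi_{kl}\log\bigl(\xi^{(1)}_k\xi^{(2)}_la_{kl}^2/\xi_{kl}^2\bigr)$ with $\xi_{kl}=\varphi(z_kw_l)$, but then maximizes $f$ over the whole simplex $\Delta$ by a variational argument: at an interior maximum the gradient condition forces $\log\bigl(\zeta^{(1)}_k\zeta^{(2)}_la_{kl}^2/\zeta_{kl}^2\bigr)$ to be a constant $\lambda$, whence $f(\zeta)=\lambda$ and the vectors $v_k=(\zeta^{(1)}_k)^{1/2}$, $w_l=(\zeta^{(2)}_l)^{1/2}$ satisfy $Aw=e^{\lambda/2}v$ and $A^*v=e^{\lambda/2}w$, giving $e^\lambda\le\|A\|^2$; handling the boundary of $\Delta$ requires an appeal to Perron--Frobenius. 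You instead bound $f(\xi)$ directly for an arbitrary point of the simplex by the log-sum (Jensen) inequality followed by Cauchy--Schwarz against the unit vectors $(\sqrt{\varphi(z_k)})_k$ and $(\sqrt{\varphi(w_l)})_l$; each step checks out, including the bookkeeping at pairs with $\varphi(z_kw_l)=0$. Your argument is shorter and avoids both the Lagrange-multiplier analysis and the boundary/degeneracy discussion. What the paper's variational computation buys in exchange is the identification of the exact maximum of $f|_\Delta$ as $2\log\|A\|$ (attained at $\xi_{kl}=\|A\|^{-2}a_{kl}(Aw)_kw_l$ for a Perron--Frobenius eigenvector $w$ of $A^*A$), which feeds the sharpness remark following the proposition; your inequality chain establishes the upper bound but does not by itself exhibit the maximizer.
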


\bp Consider the sets $\Omega=\{(k,l)\in K\times L\mid a_{kl}\ne0\}$
and
$$
\Delta=\{\xi=(\xi_{kl})_{(k,l)\in\Omega}\mid \xi_{kl}\ge0,\
\sum_{k,l}\xi_{kl}=1\}\subset\R^\Omega_+.
$$
Define a function $f$ on $\R^\Omega_+$ by
$$
f(\xi)=\sum_{k,l}\xi_{kl}\log\frac{\xi^{(1)}_k\xi^{(2)}_la_{kl}^2}{\xi_{kl}^2},
$$
where $\xi^{(1)}_k=\sum_l\xi_{kl}$ and
$\xi^{(2)}_l=\sum_k\xi_{kl}$. By Proposition~\ref{ppipo1} we have
$H_\varphi(M|N)\le f(\xi)$ for $\xi\in\Delta$ defined by
$\xi_{kl}=\varphi(z_kw_l)$. Therefore it suffices to show that
$f(\xi)\le2\log\|A\|$ for all $\xi\in\Delta$. We will prove that this
is the case for any nonzero matrix $A$ with nonnegative real
coefficients.

\smallskip

Let $\zeta\in\Delta$ be a maximum point of the function
$f|_\Delta$. We may assume that $\zeta_{kl}>0$ for all
$(k,l)\in\Omega$, since otherwise we can simply modify the matrix $A$
by letting $a_{kl}=0$ for $(k,l)\in\Omega$ such that $\zeta_{kl}=0$,
which can only decrease the norm of $A$, since by the Perron--Frobenius
theory the norm of $A^*A$ is the maximum of the numbers $\mu\ge0$ such
that $A^*Aw\ge\mu w$ for some nonzero vector $w\in\R_+^L$. By removing
zero rows and columns of $A$ we may also assume that the projection
maps $\Omega\to K$ and $\Omega\to L$ are surjective, so the numbers
$\zeta^{(1)}_k$ and $\zeta^{(2)}_l$ are well-defined and strictly
positive for all $k\in K$ and $l\in L$.

Using that
$$
\frac{\partial}{\partial\xi_{kl}}\log
\xi^{(1)}_i=\frac{\delta_{ik}}{\xi^{(1)}_k}\ \ \text{and}\ \
\frac{\partial}{\partial\xi_{kl}}\log
\xi^{(2)}_j=\frac{\delta_{jl}}{\xi^{(2)}_l},
$$
we get
$$
\frac{\partial f}{\partial\xi_{kl}}(\zeta)=\log
\frac{\zeta^{(1)}_k\zeta^{(2)}_la_{kl}^2}{\zeta_{kl}^2}.
$$
Since $\zeta$ is a maximum point of $f|_\Delta$, the gradient of $f$
at this point is orthogonal to $\Delta$, so
$$
\log \frac{\zeta^{(1)}_k\zeta^{(2)}_la_{kl}^2}{\zeta_{kl}^2}=\lambda \
\ \text{for all}\ \ (k,l)\in\Omega
$$
for some $\lambda\in\R$. Then $f(\zeta)=\lambda$, and it remains to
show that $\lambda\le2\log\|A\|$.

Put
$$
v_k=(\zeta^{(1)}_k)^{1/2}\ \ \text{and}\ \ w_l=(\zeta^{(2)}_l)^{1/2}.
$$
Then, using that $\zeta_{kl}=e^{-\lambda/2}a_{kl}v_kw_l$, we get
$$
\sum_l
a_{kl}w_l=e^{\lambda/2}\sum_l\frac{\zeta_{kl}}{v_k}=e^{\lambda/2}\frac{\zeta^{(1)}_k}{v_k}
=e^{\lambda/2}v_k,
$$
so that $Aw=e^{\lambda/2}v$. Similarly we get
$A^*v=e^{\lambda/2}w$. We thus see that $w$ is an eigenvector of
$A^*A$ with eigenvalue~$e^\lambda$. Hence $e^\lambda\le\|A\|^2$.  \ep

Note that the maximum of the function $f|_\Delta$ from the above proof
is exactly $2\log\|A\|$. Indeed, let $w\in\R_+^L$ be an eigenvector of
$A^*A$ with eigenvalue $\|A\|^2$ normalized so that $\|w\|_2=1$, which
exists by the Perron--Frobenius theorem. Then letting
$\xi_{kl}=\|A\|^{-2}a_{kl}(Aw)_kw_l$ we get $f(\xi)=2\log\|A\|$. This
of course does not imply that the supremum of $H_\varphi(M|N)$ over
all states $\varphi$ equals $2\log\|A\|$, even if $n_k\ge a_{kl}$,
since we only know that our upper bound on $H_\varphi(M|N)$ is sharp
for tracial states, and for tracial states the numbers
$\varphi(z_kw_l)$ cannot be
arbitrary.

\bigskip

\section{Canonical shift on the tower of relative commutants}

Let $N\subset M$ be a finite index inclusion of II$_1$ factors. Put
$M_{-1}=N$ and $M_0=M$. Iterating the basic extension with respect to
the trace-preserving conditional expectations we get the Jones tower
$$
M_{-1}\subset M_0\subset M_1\subset M_2\subset\cdots.
$$
We also choose a tunnel
$$
\cdots\subset M_{-3}\subset M_{-2}\subset M_{-1}\subset M_0.
$$
Let $e_{j}\in M_{j+1}$, $j\in\Z$, be the corresponding Jones
projections. Denote by $\tau$ the unique tracial state on $\bigcup_n M_n$
and by $E_j$ the $\tau$-preserving conditional expectation
$\bigcup_nM_n\to M_j$. Thus, $M_{j+1}=M_je_jM_j$, meaning that $M_{j+1}$
is spanned by the elements $xe_jy$ for $x,y\in M_j$,
$e_jxe_j=E_{j-1}(x)e_j$ for $x\in M_j$, and $E_j(e_j)=\lambda1$, where
$\lambda=[M : N]^{-1}$.

\smallskip

Consider the canonical shift $\gamma$ on $\bigcup_{j<k}(M_j'\cap
M_k)$. This is an automorphism such that $\gamma(e_j)=e_{j+2}$ and
$\gamma(M_j'\cap M_k)=M_{j+2}'\cap M_{k+2}$. It can be defined as
follows, see,~e.g.,~\cite{MR2251116}*{Section~10.4}. The representation
of $M_{j+1}$ on $L^2(M_j)$ given by the definition of the basic
extension extends uniquely to a representation of $\bigcup_nM_n$ such
that
$$
e_{j+n}\Lambda_j(x)=J_je_{j-n}J_j\Lambda_j(x)=\Lambda_j(xe_{j-n})\ \
\text{for all}\ n\ge1,
$$
where $\Lambda_j\colon M_j\to L^2(M_j)$ is the GNS-map and $J_j$ is
the modular conjugation. In this representation we have
$M_{j+n}=J_jM_{j-n}'J_j$. Define a $*$-anti-automorphism $\gamma_j$ of
$\bigcup_{i<k}(M_i'\cap M_k)$ by
$$
\gamma_j(x)=J_jx^*J_j\ \ \text{on}\ L^2(M_j).
$$
The canonical shift is defined by $\gamma=\gamma_{j+1}\gamma_j$. This
definition is independent of $j\in\Z$. The automorphism $\gamma$ is
completely characterized by the properties that it maps $M_j'\cap M_k$
into $M_{j+2}'\cap M_{k+2}$ and satisfies
\begin{equation}\label{eshift}
\gamma(x)e_{j+1}=\lambda^{j-k}e_{j+1}\dots e_k x e_{k+1}e_k\dots
e_{j+1}\ \ \text{for}\ \ x\in M_j'\cap M_k,\ j<k.
\end{equation}

The following proposition, which is at the origin of the
classification theory of subfactors, is a well-known result of
Ocneanu.

\begin{proposition} \label{pocneanu} There is an isomorphism of the
inductive system
$$
\Endd_\NN(L^2(M))\xrightarrow{\iota_{L^2(M)}\otimes\,\cdot}\Endd_\NN(L^2(M)^{\otimes_N
2})\xrightarrow{\iota_{L^2(M)}\otimes\,\cdot} \Endd_\NN(L^2(M)^{\otimes_N
3})\to \dots
$$
onto the system
$$
M_{-1}'\cap M_1\to M'_{-3}\cap M_1\to M_{-5}'\cap M_1\to \dots\ ,
$$
where all the arrows are the inclusion maps, such that the shift
endomorphism $T\mapsto T\otimes\iota_{L^2(M)}$ of
$\bigcup_{n\ge1}\Endd_\NN(L^2(M)^{\otimes_N n})$ corresponds to the
endomorphism $\gamma^{-1}$ of $\bigcup_{n\ge1}(M_{-2n+1}'\cap M_1)$.
\end{proposition}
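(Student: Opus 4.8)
The plan is to reduce the statement to the classical identification of spaces of $N$-$N$ bimodule maps with relative commutants in the Jones tower, and then to match the two fusion maps $\iota_{L^2(M)}\otimes\,\cdot$ and $\,\cdot\,\otimes\iota_{L^2(M)}$ against the inclusions and the shift $\gamma^{-1}$ separately.

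First I would establish the bimodule identification. Viewing $N=M_{-1}$ as acting on both sides, one has $L^2(M)^{\otimes_N n}\cong {}_N L^2(M_{n-1})_N$ as $N$-$N$ bimodules. This follows by induction from the basic-construction relation ${}_Q L^2(\langle Q,e_P\rangle)_Q\cong {}_Q L^2(Q)\otimes_P L^2(Q)_Q$ for an inclusion $P\subset Q$: applying it along the tunnel $\cdots\subset M_{-2}\subset M_{-1}\subset M_0$ rewrites each copy of ${}_N L^2(M)_N$ and, after cancelling the inner $\otimes_N$-factors, telescopes to $L^2(M_{n-1})$. Consequently $\Endd_\NN(L^2(M)^{\otimes_N n})$ is the algebra of operators on $L^2(M_{n-1})$ commuting with the left and right $N$-actions. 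The commutant of the right $N$-action is the basic extension of $M_{-1}\subset M_{n-1}$, which is $M_{2n-1}$ by the iterated-basic-construction description of the tower; intersecting with the commutant of the left $N$-action gives
$$
\Endd_\NN\bigl(L^2(M)^{\otimes_N n}\bigr)\cong N'\cap M_{2n-1}.
$$
Since $\gamma^{-1}$ lowers indices by two, $\gamma^{-(n-1)}$ carries $N'\cap M_{2n-1}=M_{-1}'\cap M_{2n-1}$ onto $M_{-2n+1}'\cap M_1$, which anchors the whole system inside the fixed algebra $M_1$.

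It then remains to transport the two structural maps through these identifications. A bimodule computation shows that, in the realization on $L^2(M_{n-1})$, one of the fusions $\iota_{L^2(M)}\otimes\,\cdot$ and $\,\cdot\,\otimes\iota_{L^2(M)}$ is a plain inclusion $N'\cap M_{2n-1}\hookrightarrow N'\cap M_{2n+1}$ while the other raises indices by two; conjugating the whole system by $\gamma^{-(n-1)}$ interchanges these roles, so that in the $M_1$-anchored picture the connecting maps $\iota_{L^2(M)}\otimes\,\cdot$ become the inclusions $M_{-2n+1}'\cap M_1\hookrightarrow M_{-2n-1}'\cap M_1$ and the shift $\,\cdot\,\otimes\iota_{L^2(M)}$ becomes $\gamma^{-1}$, which indeed sends $M_{-2n+1}'\cap M_1$ into $M_{-2n-1}'\cap M_{-1}\subset M_{-2n-1}'\cap M_1$. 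To pin down the index-shifting map I would use the explicit description $\gamma=\gamma_{j+1}\gamma_j$ with $\gamma_j(x)=J_j x^* J_j$ on $L^2(M_j)$, together with the characterization~\eqref{eshift}, realizing the fusion on the GNS spaces exactly as in the computations of Lemmas~\ref{lpi} and~\ref{lvee}.

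The main obstacle is precisely this last matching: verifying that the shift is $\gamma^{-1}$ rather than $\gamma$, and keeping the bookkeeping of the left and right $N$-actions, the tunnel indices, and the modular conjugations mutually consistent. The subtle point is that each $\gamma_j$ is a $*$-\emph{anti}-automorphism, so it is only the composition of two of them that yields the correct index-by-two shift, and it is the self-duality of the bimodule $L^2(M)$ (compare Proposition~\ref{pminindex}) that makes the two sides of the fusion interchangeable up to this twist. Once the shift is identified with $\gamma^{-1}$, compatibility with the inclusions and the fact that $\gamma^{-1}$ restricts to an injective, non-surjective endomorphism of $\bigcup_{n\ge1}(M_{-2n+1}'\cap M_1)$ finish the argument.
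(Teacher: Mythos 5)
Your outline follows essentially the same route as the paper: identify $L^2(M)^{\otimes_N n}\cong L^2(M_{n-1})$ as an $N$-bimodule, deduce $\Endd_\NN(L^2(M)^{\otimes_N n})\cong N'\cap M_{2n-1}$, check that one of the two tensorings is the plain inclusion while the other is $\gamma$, and then conjugate by $\gamma^{-(n-1)}$ to anchor everything in $M_1$ and turn the shift into $\gamma^{-1}$. That final bookkeeping is exactly the paper's last step and you have it right, including the correct implicit assignment (right tensoring is the inclusion before the twist, hence becomes $\gamma^{-1}$ after it). Two caveats. First, the telescoping you describe does not quite deliver the identification: the relation $L^2(\langle Q,e_P\rangle)\cong L^2(Q)\otimes_P L^2(Q)$ only gives the ``doubling'' isomorphisms $L^2(M_{2k-j})\cong L^2(M_k)\otimes_{M_j}L^2(M_k)$, whereas the induction needs the incremental fusion rule $L^2(M_{n-1})\otimes_N L^2(M)\cong L^2(M_n)$, and one also needs the resulting identifications to be associative so that $\psi_n$ is independent of parenthesization; the paper handles both points by constructing the explicit unitaries $u_{m,n}\colon L^2(M_m)\otimes_N L^2(M_n)\to L^2(M_{m+n+1})$ and verifying the cocycle identity~\eqref{eassoc}. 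Second, the statement you defer to ``a bimodule computation'' --- that under $\psi_n$ right tensoring is the inclusion and left tensoring is $\gamma$ --- is the substantive content of the proof (the paper's Lemma~\ref{lshift2}, a page of computation with the Jones projections, the identity~\eqref{eshift}, and the modular conjugations); you correctly name the tools, but all of the actual work of the proposition lives there.
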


Despite being well-known, this is usually formulated in a weaker form,
see, e.g.,~\cite{MR1424954}*{Section~4}, and it seems to be difficult to
find a clear complete proof of the proposition as it is stated above
in the literature. We will therefore sketch a possible proof for the
reader's convenience.

\smallskip

We start by considering the $N$-bimodule maps
$$
u_{m,n}\colon L^2(M_m)\otimes_N L^2(M_n)\to L^2(M_{m+n+1})\ \
(m,n\ge0),
$$
\begin{align*}
u_{m,n}(\Lambda_m(x)\otimes\Lambda_n(y))&=\lambda^{-(m+1)(n+1)/2}\Lambda_{m+n+1}(xe_m\dots
e_0e_{m+1}\dots e_1\dots e_{m+n}\dots e_ny)\\
&=\lambda^{-(m+1)(n+1)/2}\Lambda_{m+n+1}(xe_m\dots e_{m+n}
e_{m-1}\dots e_{m+n-1}\dots e_0\dots e_ny).
\end{align*}

\begin{lemma} The maps $u_{m,n}$ are unitary, and the following
identities hold:
\begin{equation} \label{eassoc}
u_{k+m+1,n}(u_{k,m}\otimes\iota_{L^2(M_n)}) =
u_{k,m+n+1}(\iota_{L^2(M_k)}\otimes u_{m,n}).
\end{equation}
\end{lemma}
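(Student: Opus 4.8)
The plan is to work directly with the explicit staircase words and to reduce everything to the three structural relations recorded above: $e_ixe_i=E_{i-1}(x)e_i$ for $x\in M_i$, the normalization $E_i(e_i)=\lambda1$, and the commutation $e_ie_j=e_je_i$ for $|i-j|\ge2$. Throughout I write $W_{m,n}=e_m\cdots e_0\,e_{m+1}\cdots e_1\cdots e_{m+n}\cdots e_n$ and I regard $L^2(M_m)\otimes_N L^2(M_n)$ as the relative tensor product over $N=M_{-1}$, whose inner product is $\langle\Lambda_m(x)\otimes\Lambda_n(y),\Lambda_m(x')\otimes\Lambda_n(y')\rangle=\tau\big(y^*E_{-1}(x^*x')y'\big)$. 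Conceptually $u_{m,n}$ is just the associativity isomorphism for the identifications $L^2(M_k)\cong L^2(M)^{\otimes_N(k+1)}$ obtained by iterating the basic construction, so unitarity and \eqref{eassoc} are coherence statements; but since the lemma asks for the explicit formula I would verify them by hand.

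For unitarity I would first show that $u_{m,n}$ is isometric, which — after moving the trace through the conditional expectation $E_n$ and using $\tau((xW_{m,n}y)^*(x'W_{m,n}y'))=\tau(y^*E_n(W_{m,n}^*x^*x'W_{m,n})y')$ — unwinds to the single identity
$$
E_n\big(W_{m,n}^*\,z\,W_{m,n}\big)=\lambda^{(m+1)(n+1)}E_{-1}(z)\qquad(z\in M_m).
$$
I would prove this by induction on the number of blocks, peeling the Jones projections of a block from the outside in: $E_{i+1}$-bimodularity extracts the factors lying in $M_{i+1}$, $E_i(e_i)=\lambda$ produces the powers of $\lambda$, and $e_ixe_i=E_{i-1}(x)e_i$ collapses the middle, each step lowering the relevant index and feeding into the tower identity $E_{i-1}E_i=E_{i-1}$. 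Granting isometry, surjectivity — and hence unitarity — follows either from a count of the (finite) von Neumann dimensions of the two $N$-bimodules, or by recognizing $u_{m,n}$ as a composite of the one-step unitaries $L^2(M_{j+1})\cong L^2(M_j)\otimes_{M_{j-1}}L^2(M_j)$, $\Lambda_{j+1}(xe_jy)\mapsto\lambda^{-1/2}\Lambda_j(x)\otimes\Lambda_j(y)$.

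For \eqref{eassoc} I would apply both sides to an elementary tensor $\Lambda_k(x)\otimes\Lambda_m(y)\otimes\Lambda_n(z)$. The powers of $\lambda$ agree after the elementary check $(k+1)(m+1)+(k+m+2)(n+1)=(m+1)(n+1)+(k+1)(m+n+2)$, so both sides become a common scalar times $\Lambda_{k+m+n+2}$ of a word with $x,y,z$ inserted, and everything reduces to the operator identity $W_{k,m}\,y\,W_{k+m+1,n}=W_{k,m+n+1}\,y\,W_{m,n}$ for $y\in M_m$. This is where the two normal forms of the staircase word earn their keep: rewriting exhibits $W_{k,m+n+1}=W_{k,m}\prod_i H_i$ with each $H_i$ built from projections of index $>m$, and $W_{k+m+1,n}=\prod_i H_iL_i$ with the same $H_i$ interleaved with low-index blocks $L_i$; after cancelling $W_{k,m}$ the identity follows because each $H_i$ commutes with $y\in M_m$ while the index sets of $H_i$ and $L_j$ (for $i>j$) are separated by exactly $2$, so the blocks reorder as $\prod_i(H_iL_i)=(\prod_iH_i)(\prod_iL_i)$. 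I expect this block bookkeeping to be the main obstacle — the index gaps are tight at $2$, and it is the only place where the combinatorics of the full staircase, rather than a single $e_i$, is genuinely used — after which \eqref{eassoc} follows since the elementary tensors span a dense subspace.
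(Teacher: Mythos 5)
Your argument is correct, and for the most part it runs parallel to the paper's. For isometry you and the paper do essentially the same thing: the paper records the identities $E_{m+n-k}(e_{m+n-k}\cdots e_{m-k}E_{m-k}(x^*x)e_{m-k}\cdots e_{m+n-k})=\lambda^{n+1}E_{m-k-1}(x^*x)$ for $k=0,\dots,m$, whose composition is exactly your single identity $E_n(W_{m,n}^*\,z\,W_{m,n})=\lambda^{(m+1)(n+1)}E_{-1}(z)$, obtained by the same outside-in peeling of the blocks of the second normal form. For \eqref{eassoc} the paper only asserts that the verification is straightforward; your reduction to the operator identity $W_{k,m}\,y\,W_{k+m+1,n}=W_{k,m+n+1}\,y\,W_{m,n}$ for $y\in M_m$, together with the block-reordering argument (each $H_i$ commutes with $y\in M_m$ because its indices exceed $m$, and $L_j$ commutes with $H_i$ for $i>j$ because the index sets are separated by at least $2$), is a correct way to carry this out -- the exponent identity $(k+1)(m+1)+(k+m+2)(n+1)=(m+1)(n+1)+(k+1)(m+n+2)$ also checks out. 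The one genuine divergence is surjectivity: the paper proves it algebraically, by a double induction showing first $M_me_m\cdots e_{m+n}M_{m+n}=M_{m+n+1}$ and then that $M_m W_{m,n} M_n=M_{m+n+1}$, which is elementary and self-contained; your dimension count (multiplicativity of $\dim_{-N}$ under $\otimes_N$ for bifinite bimodules, with both sides of right $N$-dimension $\lambda^{-(m+n+2)}$, so an isometric right $N$-module map must be onto) is shorter but imports the coupling-constant machinery. Your alternative surjectivity route -- factoring $u_{m,n}$ through the one-step unitaries $L^2(M_{j+1})\cong L^2(M_j)\otimes_{M_{j-1}}L^2(M_j)$ -- is less immediate than you suggest, since those identifications are relative tensor products over the varying subalgebras $M_{j-1}$ rather than over $N$, and assembling them into $u_{m,n}$ essentially reproduces the paper's induction; I would lean on the dimension count or on the paper's algebraic argument instead.
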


\bp Using the identities
$$
E_{m+n-k}(e_{m+n-k}\dots e_{m-k}E_{m-k}(x^*x)e_{m-k}\dots e_{m+n-k})
=\lambda^{n+1}E_{m-k-1}(x^*x)
$$
for $k=0,\dots,m$, it is easy to check that the maps $u_{m,n}$ are
isometric. Identity \eqref{eassoc} is also straightforward. To prove
surjectivity of $u_{m,n}$, observe first that
$$
M_me_m\dots e_{m+n}M_{m+n}=M_{m+n+1}.
$$
This can be seen by induction on $n$, using that
$$
M_me_m\dots e_{m+n}M_{m+n}= M_me_m\dots
e_{m+n-1}M_{m+n-1}e_{m+n}M_{m+n}
$$
and $M_{m+n}e_{m+n}M_{m+n}=M_{m+n+1}$. From this, in turn, by
induction on $m$ we get
$$
M_me_m\dots e_{m+n} e_{m-1}\dots e_{m+n-1}\dots e_0\dots
e_nM_n=M_{m+n+1},
$$
since the left hand side can be written as
$$
M_me_m\dots e_{m+n}M_{m-1}e_{m-1}\dots e_{m+n-1}\dots e_0\dots e_nM_n.
$$
This proves surjectivity of $u_{m,n}$.  \ep

By distributing parentheses in $L^2(M)^{\otimes_N n}$, e.g.,~as
$$
L^2(M)\otimes_N(L^2(M)\otimes_N(\dots(L^2(M)\otimes_N L^2(M))\dots)),
$$
and using the isomorphisms $u_{k,m}$, we get an isomorphism $v_n\colon
L^2(M)^{\otimes_N n}\to L^2(M_{n-1})$, and hence an isomorphism
$$
\psi_n\colon\Endd_\NN(L^2(M)^{\otimes_N n})\to N'\cap M_{2n-1}.
$$
By \eqref{eassoc} the isomorphism $v_n$, and hence $\psi_n$, is
independent of the way we distribute parentheses in~$L^2(M)^{\otimes_N
n}$.

\begin{lemma} \label{lshift2} For any
$T\in \Endd_\NN(L^2(M)^{\otimes_N n})$ we have
$$
\psi_{n+1}(T\otimes\iota_{L^2(M)})=\psi_n(T)\ \ \text{and}\ \
\psi_{n+1}(\iota_{L^2(M)}\otimes T)=\gamma(\psi_n(T)).
$$
\end{lemma}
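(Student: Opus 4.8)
The plan is to exploit the freedom in distributing parentheses. By the preceding lemma the unitary $v_n\colon L^2(M)^{\otimes_N n}\to L^2(M_{n-1})$, and hence $\psi_n=\Ad(v_n)$, is independent of the parenthesization, so I may compute $v_{n+1}$ in the two extreme ways. Distributing $L^2(M)^{\otimes_N(n+1)}$ as $\bigl(L^2(M)^{\otimes_N n}\bigr)\otimes_N L^2(M)$ and invoking \eqref{eassoc} gives $v_{n+1}=u_{n-1,0}(v_n\otimes\iota_{L^2(M)})$, while distributing it as $L^2(M)\otimes_N\bigl(L^2(M)^{\otimes_N n}\bigr)$ gives $v_{n+1}=u_{0,n-1}(\iota_{L^2(M)}\otimes v_n)$. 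Substituting into $\psi_{n+1}=\Ad(v_{n+1})$ yields $\psi_{n+1}(T\otimes\iota_{L^2(M)})=\Ad(u_{n-1,0})\bigl(\psi_n(T)\otimes\iota_{L^2(M_0)}\bigr)$ and $\psi_{n+1}(\iota_{L^2(M)}\otimes T)=\Ad(u_{0,n-1})\bigl(\iota_{L^2(M_0)}\otimes\psi_n(T)\bigr)$. Thus everything reduces to two statements about the $u$'s: writing $a=\psi_n(T)\in N'\cap M_{2n-1}$, I must show \textbf{(A)} $u_{n-1,0}(a\otimes\iota_{L^2(M_0)})u_{n-1,0}^*=a$, the right-hand side read inside $N'\cap M_{2n+1}$ through the inclusion $M_{2n-1}\subset M_{2n+1}$, and \textbf{(B)} $u_{0,n-1}(\iota_{L^2(M_0)}\otimes a)u_{0,n-1}^*=\gamma(a)$.

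For \textbf{(A)} I would prove the stronger intertwining $u_{n-1,0}\bigl(\rho_{n-1}(z)\otimes\iota_{L^2(M_0)}\bigr)u_{n-1,0}^*=\rho_n(z)$ for every $z\in M_{2n-1}$, where $\rho_p$ denotes the representation of $\cup_k M_k$ on $L^2(M_p)$ from the proof of Proposition~\ref{pocneanu}; restricting to $z=a\in N'\cap M_{2n-1}$ then gives (A). Both sides are unital $*$-homomorphisms on $M_{2n-1}$ (note $\rho_{n-1}(z)$ commutes with the right $N$-action, which is $M_{2n-1}'$, so $\rho_{n-1}(z)\otimes\iota$ is well defined), so it suffices to check the equality on the generators $M_{n-1}$ and $e_{n-1},\dots,e_{2n-2}$ of $M_{2n-1}$. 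For $z\in M_{n-1}$ this is immediate from $u_{n-1,0}(\Lambda_{n-1}(x)\otimes\Lambda_0(y))=\lambda^{-n/2}\Lambda_n(xe_{n-1}\cdots e_0\,y)$, left multiplication on the first leg becoming left multiplication by $z\in M_{n-1}\subset M_n$. For the Jones projections one uses that in $\rho_{n-1}$ the projection $e_{n-1}$ is the genuine Jones projection of $M_{n-2}\subset M_{n-1}$ while $e_{n-1+s}$ ($1\le s\le n-1$) acts as right multiplication by $e_{n-1-s}$; a short computation with the string $e_{n-1}\cdots e_0$ and the relations $e_ie_{i\pm1}e_i=\lambda e_i$ then matches these with the action of the same projections in $\rho_n$.

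For \textbf{(B)} one cannot reduce to generators of $M_{2n-1}$, since $\iota_{L^2(M_0)}\otimes\rho_{n-1}(z)$ is defined only when $\rho_{n-1}(z)$ commutes with the left $N$-action, i.e. only for $z\in N'\cap M_{2n-1}$. Instead I would compute directly, using the adjoint of $u_{0,n-1}(\Lambda_0(x)\otimes\Lambda_{n-1}(y))=\lambda^{-n/2}\Lambda_n(xe_0e_1\cdots e_{n-1}y)$ together with the fact that $a\in N'=M_{-1}'$ commutes past the leading Jones projections. The effect of prepending one copy of $L^2(M)$ is to insert the ascending string $e_0e_1\cdots e_{n-1}$ and to raise the tower indices attached to $\rho_{n-1}(a)$ by two; this is exactly the bookkeeping encoded in the defining relation \eqref{eshift} of the canonical shift, which carries $M_{-1}'\cap M_{2n-1}$ into $M_1'\cap M_{2n+1}$, so that the transported operator is $\rho_n(\gamma(a))$, the powers of $\lambda$ from $u_{0,n-1}$ and $u_{0,n-1}^*$ reproducing the factor $\lambda^{j-k}$ in \eqref{eshift}.

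The main obstacle is \textbf{(B)}: one must align, index by index and with the correct $\lambda$-powers, the conjugation produced by the $e$-string of $u_{0,n-1}$ (after translating the right-multiplication actions occurring in $\rho_{n-1}$ into honest left actions in $\rho_n$) with the canonical-shift formula \eqref{eshift}. Step (A), by contrast, is routine once the representation $\rho_n$ on $L^2(M_n)$ has been written out explicitly.
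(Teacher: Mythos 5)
Your reduction via the two parenthesizations and your treatment of the first identity coincide with the paper's proof: there too one writes $v_{n+1}=u_{n-1,0}(v_n\otimes\iota)$, reduces the first identity to showing that $u_{n-1,0}$ intertwines the $M_{2n-1}$-actions, and checks this on the generators $M_{n-1}$ and $e_{n-1},\dots,e_{2n-2}$ by exactly the case analysis you indicate. So your step (A) is sound.

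The genuine gap is in (B), precisely where you flag "the main obstacle", and your proposal does not close it. You rightly note that $\iota_{L^2(M_0)}\otimes\rho_{n-1}(z)$ only makes sense for $z\in N'\cap M_{2n-1}$, so the relation $u_{0,n-1}(\iota\otimes a)=\gamma(a)u_{0,n-1}$ cannot be verified on multiplicative generators of $M_{2n-1}$; but the "direct computation" you offer instead is not available either, because a general $a\in N'\cap M_{2n-1}$ is not a word in the Jones projections (the higher relative commutants properly contain the Temperley--Lieb algebra in general), so there is no explicit $e$-string attached to $\rho_{n-1}(a)$ whose indices could be "raised by two" and matched against \eqref{eshift}. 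The paper resolves this with an interpolation you are missing: using that $\gamma(a)$ commutes with $M$, the claim reduces to the operator identity $w_{n-1}a=\gamma(a)w_{n-1}$ on $L^2(M_{n-1})$, where $w_{n-1}\Lambda_{n-1}(y)=\Lambda_n(e_0\cdots e_{n-1}y)$; one then introduces the $*$-homomorphism $\pi\colon M_{2n-1}\to M_{2n+1}$, $\pi(x)=\lambda^{-2n}e_0\cdots e_{2n-1}\,x\,e_{2n}e_{2n-1}\cdots e_0$, which is defined on \emph{all} of $M_{2n-1}$ and, by \eqref{eshift} and $e_0w_{n-1}=w_{n-1}$, coincides with $\gamma(\cdot)e_0$ on $N'\cap M_{2n-1}$. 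Since $\pi$ is multiplicative, the stronger identity $w_{n-1}x=\pi(x)w_{n-1}$ can now legitimately be checked on the generators $N$ and $e_{-1},\dots,e_{2n-2}$, which is the elementary case analysis that finishes the proof. Without this device (or an equivalent one), your sketch of (B) remains an unproved assertion rather than a computation.
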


\bp As $v_{n+1}=u_{n-1,0}(v_n\otimes\iota)$, for the first equality it
suffices to show that
$$
u_{n-1,0}\colon L^2(M_{n-1})\otimes_N L^2(M)\to L^2(M_n)
$$
is a left $M_{2n-1}$-module map. It is clear that $u_{n-1,0}$ is an
$M_{n-1}$-module map. Therefore it is enough to show that
$u_{n-1,0}e_k=e_ku_{n-1,0}$ for $k=n-1,\dots,2n-2$. Consider three
cases.

\smallskip

(i) $k=n-1$. We have, for $x\in M_{n-1}$ and $y\in M$, that
$$
u_{n-1,0}(e_{n-1}\Lambda_{n-1}(x)\otimes\Lambda(y))
=\lambda^{-n/2}\Lambda_n(E_{n-2}(x)e_{n-1}\dots e_0y)
=\lambda^{-n/2}\Lambda_n(e_{n-1}xe_{n-1}\dots e_0y),
$$
which is what we need.

\smallskip

(ii) $k=n$. In this case, using that
$e_n\Lambda_{n-1}(x)=\Lambda_{n-1}(xe_{n-2})$, we get
$$
u_{n-1,0}(e_{n}\Lambda_{n-1}(x)\otimes\Lambda(y))
=\lambda^{-n/2}\Lambda_n(xe_{n-2}e_{n-1}\dots e_0y)
=\lambda^{-n/2+1}\Lambda_n(xe_{n-2}\dots e_0y).
$$
On the other hand,
$$
e_nu_{n-1,0}(\Lambda_{n-1}(x)\otimes\Lambda(y))
=\lambda^{-n/2}\Lambda_n(E_{n-1}(xe_{n-1}\dots e_0y))
=\lambda^{-n/2+1}\Lambda_n(xe_{n-2}\dots e_0y),
$$
so again $u_{n-1,0}e_n=e_nu_{n-1,0}$.

\smallskip

(iii) $n<k\le 2n-2$. Using again that $e_k=J_{n-1}e_{2n-k-2}J_{n-1}$
on $L^2(M_{n-1})$ and $e_k=J_{n}e_{2n-k}J_{n}$ on~$L^2(M_{n})$, we get
$$
u_{n-1,0}(e_k\Lambda_{n-1}(x)\otimes\Lambda(y))
=\lambda^{-n/2}\Lambda_n(xe_{2n-k-2}e_{n-1}\dots e_0y)
$$
and
$$
e_ku_{n-1,0}(\Lambda_{n-1}(x)\otimes\Lambda(y))
=\lambda^{-n/2}\Lambda_n(xe_{n-1}\dots e_0ye_{2n-k}).
$$
As $e_{2n-k}$ commutes with $y\in M$ and
$$
e_{2n-k-2}e_{n-1}\dots e_0=\lambda e_{n-1}\dots e_{2n-k}
e_{2n-k-2}\dots e_0=e_{n-1}\dots e_0e_{2n-k},
$$
this gives $u_{n-1,0}e_k=e_ku_{n-1,0}$.

\smallskip

Turning to the second identity, as $v_{n+1}=u_{0,n-1}(\iota\otimes
v_n)$, it suffices to show that
$$
u_{0,n-1}(\iota\otimes x)=\gamma(x)u_{0,n-1}\ \ \text{for}\ \ x\in
N'\cap M_{2n-1}.
$$
Since $\gamma(x)\in M_1'\cap M_{2n+1}$ commutes with $M$, recalling
the definition of $u_{0,n-1}$ we see that this boils down to showing
that $w_{n-1}x=\gamma(x)w_{n-1}$, where $w_{n-1}\colon L^2(M_{n-1})\to
L^2(M_n)$ is defined by
$$
w_{n-1}\Lambda_{n-1}(y)=\Lambda_n(e_0\dots e_{n-1}y).
$$
It is convenient to prove a stronger statement. Define a map $\pi\colon M_{2n-1}\to M_{2n+1}$ by
$$
\pi(x)=\lambda^{-2n}e_{0}\dots e_{2n-1} x e_{2n}e_{2n-1}\dots e_{0}.
$$
It is easy to see that $\pi$ is a $*$-homomorphism. By \eqref{eshift}
we also have $\pi(x)=\gamma(x)e_0$ for $x\in N'\cap M_{2n-1}$. It
follows that in order to prove the second part of the lemma it
suffices to show that
$$
w_{n-1}x=\pi(x)w_{n-1}\ \ \text{for all}\ \ x\in M_{2n-1}.
$$
Since $\pi(x)=xe_0$ for $x\in N$, this identity holds for $x\in
N$. Hence to finish the proof it is enough to check this identity for
$x=e_k$, $k=-1,\dots,2n-2$. That is, we have to show that
$w_{n-1}e_{-1}=\lambda^{-1}e_0e_{-1}e_1e_0w_{n-1}$ and
$w_{n-1}e_k=e_{k+2}e_0w_{n-1}$ for $k=0,\dots,2n-2$. This is done
similarly to the first part of the proof of the lemma by considering
different cases: two cases for
$w_{n-1}e_{-1}=\lambda^{-1}e_0e_{-1}e_1e_0w_{n-1}$ corresponding to
$n=1$ and $n>1$, and four cases for $w_{n-1}e_k=e_{k+2}e_0w_{n-1}$
corresponding to $0\le k< n-2$, $k=n-2$ ($n\ge2$), $k=n-1$ ($n\ge1$)
and $n\le k\le 2n-2$.  \ep

\bp[Proof of Proposition~\ref{pocneanu}] It follows from
Lemma~\ref{lshift2} that the isomorphisms
$$
\gamma^{-(n-1)}\psi_n\colon\Endd_\NN(L^2(M)^{\otimes_N n})\to
M_{-2n+1}'\cap M_1
$$
define the required isomorphism of the inductive systems.
\ep

\bigskip

\raggedright
\begin{bibdiv}
\begin{biblist}

\bib{MR1679171}{article}{
      author={Banica, Teodor},
       title={Representations of compact quantum groups and subfactors},
        date={1999},
        ISSN={0075-4102},
     journal={J. Reine Angew. Math.},
      volume={509},
       pages={167\ndash 198},
      eprint={\href{http://arxiv.org/abs/math/9804015}{{\tt arXiv:math/9804015
  [math.QA]}}},
         url={http://dx.doi.org/10.1515/crll.1999.037},
         doi={10.1515/crll.1999.037},
      review={\MR{1679171 (2000g:46087)}},
}

\bib{MR2113848}{article}{
      author={B{\'e}dos, Erik},
      author={Conti, Roberto},
      author={Tuset, Lars},
       title={On amenability and co-amenability of algebraic quantum groups and
  their corepresentations},
        date={2005},
        ISSN={0008-414X},
     journal={Canad. J. Math.},
      volume={57},
      number={1},
       pages={17\ndash 60},
      eprint={\href{http://arxiv.org/abs/math/0111027}{{\tt arXiv:math/0111027
  [math.OA]}}},
         url={http://dx.doi.org/10.4153/CJM-2005-002-8},
         doi={10.4153/CJM-2005-002-8},
      review={\MR{2113848 (2006b:46071)}},
}

\bib{MR1424954}{article}{
   author={Bisch, Dietmar},
   title={Bimodules, higher relative commutants and the fusion algebra
   associated to a subfactor},
   conference={
      title={Operator algebras and their applications (Waterloo, ON,
      1994/1995)},
   },
   book={
      series={Fields Inst. Commun.},
      volume={13},
      publisher={Amer. Math. Soc., Providence, RI},
   },
   date={1997},
   pages={13--63},
   review={\MR{1424954 (97i:46109)}},
}

\bib{MR561983}{article}{
      author={Connes, Alain},
       title={On the spatial theory of von {N}eumann algebras},
        date={1980},
        ISSN={0022-1236},
     journal={J. Funct. Anal.},
      volume={35},
      number={2},
       pages={153\ndash 164},
      review={\MR{MR561983 (81g:46083)}},
}

\bib{MR3121622}{article}{
      author={De~Commer, Kenny},
      author={Yamashita, Makoto},
       title={Tannaka-{K}re\u\i n duality for compact quantum homogeneous
  spaces. {I}. {G}eneral theory},
        date={2013},
        ISSN={1201-561X},
     journal={Theory Appl. Categ.},
      volume={28},
       pages={No. 31, 1099\ndash 1138},
      eprint={\href{http://arxiv.org/abs/1211.6552}{{\tt arXiv:1211.6552
  [math.OA]}}},
      review={\MR{3121622}},
}

\bib{MR2664313}{article}{
      author={De~Rijdt, An},
      author={Vander~Vennet, Nikolas},
       title={Actions of monoidally equivalent compact quantum groups and
  applications to probabilistic boundaries},
        date={2010},
        ISSN={0373-0956},
     journal={Ann. Inst. Fourier (Grenoble)},
      volume={60},
      number={1},
       pages={169\ndash 216},
      eprint={\href{http://arxiv.org/abs/math/0611175}{{\tt arXiv:math/0611175
  [math.OA]}}},
         url={http://aif.cedram.org/item?id=AIF_2010__60_1_169_0},
      review={\MR{2664313 (2011g:46128)}},
}

\bib{MR0352328}{incollection}{
      author={Furstenberg, Harry},
       title={Boundary theory and stochastic processes on homogeneous spaces},
        date={1973},
   booktitle={Harmonic analysis on homogeneous spaces ({P}roc. {S}ympos. {P}ure
  {M}ath., {V}ol. {XXVI}, {W}illiams {C}oll., {W}illiamstown, {M}ass., 1972)},
   publisher={Amer. Math. Soc., Providence, R.I.},
       pages={193\ndash 229},
      review={\MR{0352328 (50 \#4815)}},
}

\bib{MR1245827}{article}{
   author={Hamachi, Toshihiro},
   author={Kosaki, Hideki},
   title={Orbital factor map},
   journal={Ergodic Theory Dynam. Systems},
   volume={13},
   date={1993},
   number={3},
   pages={515--532},
   issn={0143-3857},
   review={\MR{1245827 (94k:46138)}},
   doi={10.1017/S0143385700007501},
}

\bib{MR1749868}{article}{
      author={Hayashi, Tomohiro},
      author={Yamagami, Shigeru},
       title={Amenable tensor categories and their realizations as {AFD}
  bimodules},
        date={2000},
        ISSN={0022-1236},
     journal={J. Funct. Anal.},
      volume={172},
      number={1},
       pages={19\ndash 75},
         url={http://dx.doi.org/10.1006/jfan.1999.3521},
         doi={10.1006/jfan.1999.3521},
      review={\MR{1749868 (2001d:46092)}},
}

\bib{MR976765}{article}{
      author={Hiai, Fumio},
       title={Minimizing indices of conditional expectations onto a subfactor},
        date={1988},
        ISSN={0034-5318},
     journal={Publ. Res. Inst. Math. Sci.},
      volume={24},
      number={4},
       pages={673\ndash 678},
         url={http://dx.doi.org/10.2977/prims/1195174872},
         doi={10.2977/prims/1195174872},
      review={\MR{976765 (90a:46157)}},
}

\bib{MR1096438}{article}{
      author={Hiai, Fumio},
       title={Minimum index for subfactors and entropy. {II}},
        date={1991},
        ISSN={0025-5645},
     journal={J. Math. Soc. Japan},
      volume={43},
      number={2},
       pages={347\ndash 379},
         url={http://dx.doi.org/10.2969/jmsj/04320347},
         doi={10.2969/jmsj/04320347},
      review={\MR{1096438 (93g:46063)}},
}

\bib{MR1644299}{article}{
      author={Hiai, Fumio},
      author={Izumi, Masaki},
       title={Amenability and strong amenability for fusion algebras with
  applications to subfactor theory},
        date={1998},
        ISSN={0129-167X},
     journal={Internat. J. Math.},
      volume={9},
      number={6},
       pages={669\ndash 722},
         url={http://dx.doi.org/10.1142/S0129167X98000300},
         doi={10.1142/S0129167X98000300},
      review={\MR{1644299 (99h:46116)}},
}

\bib{MR1916370}{article}{
      author={Izumi, Masaki},
       title={Non-commutative {P}oisson boundaries and compact quantum group
  actions},
        date={2002},
        ISSN={0001-8708},
     journal={Adv. Math.},
      volume={169},
      number={1},
       pages={1\ndash 57},
         url={http://dx.doi.org/10.1006/aima.2001.2053},
         doi={10.1006/aima.2001.2053},
      review={\MR{1916370 (2003j:46105)}},
}

\bib{MR2059809}{article}{
   author={Izumi, Masaki},
   title={Non-commutative Markov operators arising from subfactors},
   conference={
      title={Operator algebras and applications},
   },
   book={
      series={Adv. Stud. Pure Math.},
      volume={38},
      publisher={Math. Soc. Japan, Tokyo},
   },
   date={2004},
   pages={201--217},
   review={\MR{2059809 (2005c:46090)}},
}

\bib{MR2995726}{article}{
      author={Izumi, Masaki},
       title={{$E_0$}-semigroups: around and beyond {A}rveson's work},
        date={2012},
        ISSN={0379-4024},
     journal={J. Operator Theory},
      volume={68},
      number={2},
       pages={335\ndash 363},
      eprint={\href{http://arxiv.org/abs/1209.5838}{{\tt arXiv:1209.5838
  [math.OA]}}},
      review={\MR{2995726}},
}

\bib{MR2200270}{article}{
      author={Izumi, Masaki},
      author={Neshveyev, Sergey},
      author={Tuset, Lars},
       title={Poisson boundary of the dual of {${\rm SU}_q(n)$}},
        date={2006},
        ISSN={0010-3616},
     journal={Comm. Math. Phys.},
      volume={262},
      number={2},
       pages={505\ndash 531},
      eprint={\href{http://arxiv.org/abs/math/0402074}{{\tt arXiv:math/0402074
  [math.OA]}}},
         url={http://dx.doi.org/10.1007/s00220-005-1439-x},
         doi={10.1007/s00220-005-1439-x},
      review={\MR{MR2200270 (2007f:58012)}},
}

\bib{MR704539}{article}{
      author={Kaimanovich, V.~A.},
      author={Vershik, A.~M.},
       title={Random walks on discrete groups: boundary and entropy},
        date={1983},
        ISSN={0091-1798},
     journal={Ann. Probab.},
      volume={11},
      number={3},
       pages={457\ndash 490},
  url={http://links.jstor.org/sici?sici=0091-1798(198308)11:3<457:RWODGB>2.0.CO;2-5&origin=MSN},
      review={\MR{704539 (85d:60024)}},
}

\bib{MR829381}{article}{
      author={Kosaki, Hideki},
       title={Extension of {J}ones' theory on index to arbitrary factors},
        date={1986},
        ISSN={0022-1236},
     journal={J. Funct. Anal.},
      volume={66},
      number={1},
       pages={123\ndash 140},
         url={http://dx.doi.org/10.1016/0022-1236(86)90085-6},
         doi={10.1016/0022-1236(86)90085-6},
      review={\MR{829381 (87g:46093)}},
}

\bib{MR1444286}{article}{
      author={Longo, R.},
      author={Roberts, J.~E.},
       title={A theory of dimension},
        date={1997},
        ISSN={0920-3036},
     journal={$K$-Theory},
      volume={11},
      number={2},
       pages={103\ndash 159},
      eprint={\href{http://arxiv.org/abs/funct-an/9604008}{{\tt
  arXiv:funct-an/9604008 [math.FA]}}},
         url={http://dx.doi.org/10.1023/A:1007714415067},
         doi={10.1023/A:1007714415067},
      review={\MR{1444286 (98i:46065)}},
}

\bib{MR2681261}{article}{
      author={M{\"u}ger, Michael},
       title={Tensor categories: a selective guided tour},
        date={2010},
        ISSN={0041-6932},
     journal={Rev. Un. Mat. Argentina},
      volume={51},
      number={1},
       pages={95\ndash 163},
      eprint={\href{http://arxiv.org/abs/0804.3587}{{\tt arXiv:0804.3587
  [math.CT]}}},
      review={\MR{2681261 (2011f:18007)}},
}

\bib{MR3426224}{article}{
      author={Neshveyev, Sergey},
       title={Duality theory for nonergodic actions},
        date={2014},
        ISSN={1867-5778},
     journal={M{\"u}nster J. Math.},
      volume={7},
      number={2},
       pages={413\ndash 437},
      review={\MR{3426224}},
      eprint={\href{http://arxiv.org/abs/1303.6207}{{\tt arXiv:1303.6207
            [math.OA]}}},
}

\bib{MR2251116}{book}{
      author={Neshveyev, Sergey},
      author={St{\o}rmer, Erling},
       title={Dynamical entropy in operator algebras},
      series={Ergebnisse der Mathematik und ihrer Grenzgebiete. 3. Folge. A
  Series of Modern Surveys in Mathematics [Results in Mathematics and Related
  Areas. 3rd Series. A Series of Modern Surveys in Mathematics]},
   publisher={Springer-Verlag},
     address={Berlin},
        date={2006},
      volume={50},
        ISBN={978-3-540-34670-8; 3-540-34670-8},
      review={\MR{MR2251116 (2007m:46108)}},
}

\bib{MR2034922}{article}{
      author={Neshveyev, Sergey},
      author={Tuset, Lars},
       title={The {M}artin boundary of a discrete quantum group},
        date={2004},
        ISSN={0075-4102},
     journal={J. Reine Angew. Math.},
      volume={568},
       pages={23\ndash 70},
      eprint={\href{http://arxiv.org/abs/math/0209270}{{\tt arXiv:math/0209270
  [math.OA]}}},
         url={http://dx.doi.org/10.1515/crll.2004.018},
         doi={10.1515/crll.2004.018},
      review={\MR{MR2034922 (2006f:46058)}},
}

\bib{MR3204665}{book}{
      author={Neshveyev, Sergey},
      author={Tuset, Lars},
       title={Compact quantum groups and their representation categories},
      series={Cours Sp{\'e}cialis{\'e}s [Specialized Courses]},
   publisher={Soci{\'e}t{\'e} Math{\'e}matique de France, Paris},
        date={2013},
      volume={20},
        ISBN={978-2-85629-777-3},
      review={\MR{3204665}},
}

\bib{MR3291643}{article}{
      author={Neshveyev, Sergey},
      author={Yamashita, Makoto},
       title={Categorical duality for {Y}etter-{D}rinfeld algebras},
        date={2014},
        ISSN={1431-0635},
     journal={Doc. Math.},
      volume={19},
       pages={1105\ndash 1139},
      eprint={\href{http://arxiv.org/abs/1310.4407}{{\tt arXiv:1310.4407
  [math.OA]}}},
      review={\MR{3291643}},
}

\bib{MR3556413}{article}{
      author={Neshveyev, Sergey},
      author={Yamashita, Makoto},
       title={Classification of non-{K}ac compact quantum groups of {${\rm
  SU}(n)$} type},
        date={2016},
        ISSN={1073-7928},
     journal={Int. Math. Res. Not. IMRN},
      number={11},
       pages={3356\ndash 3391},
      eprint={\href{http://arxiv.org/abs/1405.6574}{{\tt arXiv:1405.6574
  [math.QA]}}},
         url={http://dx.doi.org/10.1093/imrn/rnv241},
         doi={10.1093/imrn/rnv241},
      review={\MR{3556413}},
}

\bib{MR860811}{article}{
      author={Pimsner, Mihai},
      author={Popa, Sorin},
       title={Entropy and index for subfactors},
        date={1986},
        ISSN={0012-9593},
     journal={Ann. Sci. \'Ecole Norm. Sup. (4)},
      volume={19},
      number={1},
       pages={57\ndash 106},
      review={\MR{MR860811 (87m:46120)}},
}

\bib{MR1111570}{article}{
      author={Pimsner, M.},
      author={Popa, S.},
       title={Finite-dimensional approximation of pairs of algebras and
  obstructions for the index},
        date={1991},
        ISSN={0022-1236},
     journal={J. Funct. Anal.},
      volume={98},
      number={2},
       pages={270\ndash 291},
         url={http://dx.doi.org/10.1016/0022-1236(91)90079-K},
         doi={10.1016/0022-1236(91)90079-K},
      review={\MR{1111570 (92g:46076)}},
}

\bib{MR1278111}{article}{
      author={Popa, Sorin},
       title={Classification of amenable subfactors of type {II}},
        date={1994},
        ISSN={0001-5962},
     journal={Acta Math.},
      volume={172},
      number={2},
       pages={163\ndash 255},
         url={http://dx.doi.org/10.1007/BF02392646},
         doi={10.1007/BF02392646},
      review={\MR{1278111 (95f:46105)}},
}

\bib{MR1339767}{book}{
      author={Popa, Sorin},
       title={Classification of subfactors and their endomorphisms},
      series={CBMS Regional Conference Series in Mathematics},
   publisher={Published for the Conference Board of the Mathematical Sciences,
  Washington, DC},
        date={1995},
      volume={86},
        ISBN={0-8218-0321-2},
      review={\MR{1339767 (96d:46085)}},
}

\bib{MR630645}{article}{
      author={Rosenblatt, Joseph},
       title={Ergodic and mixing random walks on locally compact groups},
        date={1981},
        ISSN={0025-5831},
     journal={Math. Ann.},
      volume={257},
      number={1},
       pages={31\ndash 42},
         url={http://dx.doi.org/10.1007/BF01450653},
         doi={10.1007/BF01450653},
      review={\MR{630645 (83f:43002)}},
}

\bib{MR2276175}{article}{
      author={Tomatsu, Reiji},
       title={Amenable discrete quantum groups},
        date={2006},
        ISSN={0025-5645},
     journal={J. Math. Soc. Japan},
      volume={58},
      number={4},
       pages={949\ndash 964},
      eprint={\href{http://arxiv.org/abs/math/0302222}{{\tt arXiv:math/0302222
  [math.OA]}}},
         url={http://projecteuclid.org/getRecord?id=euclid.jmsj/1179759531},
      review={\MR{2276175 (2008g:46122)}},
}

\bib{MR2335776}{article}{
      author={Tomatsu, Reiji},
       title={A characterization of right coideals of quotient type and its
  application to classification of {P}oisson boundaries},
        date={2007},
        ISSN={0010-3616},
     journal={Comm. Math. Phys.},
      volume={275},
      number={1},
       pages={271\ndash 296},
      eprint={\href{http://arxiv.org/abs/math/0611327}{{\tt arXiv:math/0611327
  [math.OA]}}},
         url={http://dx.doi.org/10.1007/s00220-007-0267-6},
         doi={10.1007/s00220-007-0267-6},
      review={\MR{2335776 (2008j:46058)}},
}

\bib{MR1721584}{article}{
      author={Yamagami, Shigeru},
       title={Notes on amenability of commutative fusion algebras},
        date={1999},
        ISSN={1385-1292},
     journal={Positivity},
      volume={3},
      number={4},
       pages={377\ndash 388},
         url={http://dx.doi.org/10.1023/A:1009828927833},
         doi={10.1023/A:1009828927833},
      review={\MR{1721584 (2000m:46130)}},
}

\bib{MR2091457}{article}{
      author={Yamagami, Shigeru},
       title={Frobenius duality in {$C\sp *$}-tensor categories},
        date={2004},
        ISSN={0379-4024},
     journal={J. Operator Theory},
      volume={52},
      number={1},
       pages={3\ndash 20},
      review={\MR{2091457 (2005f:46109)}},
}

\end{biblist}
\end{bibdiv}

\bigskip

\end{document}